\documentclass{amsart}
\usepackage[utf8]{inputenc}

\usepackage{enumerate}
\usepackage{enumitem}
\usepackage{multicol}
\usepackage{amssymb,amsmath}
\usepackage{amsthm}
\usepackage{turnstile}
\usepackage{textcomp}
\usepackage{graphicx}
\usepackage{subcaption}
\usepackage{mathtools}
\usepackage{wrapfig}
\usepackage{hhline}
\usepackage{array,multirow}
\usepackage{xcolor}
\usepackage{float}
\usepackage{geometry}
\usepackage{tikz}

\usepackage{hyperref}
\usepackage[normalem]{ulem}

\usetikzlibrary{positioning,calc}

\theoremstyle{plain}
\newtheorem{theorem}{Theorem}[section]
\newtheorem{lemma}[theorem]{Lemma}
\newtheorem{corollary}[theorem]{Corollary}
\newtheorem{definition}[theorem]{Definition}
\newtheorem{notation}[theorem]{Notation}
\newtheorem{fact}[theorem]{Fact}
\newtheorem{remark}[theorem]{Remark}

\DeclareMathOperator{\prcat}{\mathtt{PRCatSpec}}
\DeclareMathOperator{\cone}{\mathtt{Cone}}
\DeclareMathOperator{\shuffle}{\mathtt{Shuffle}}
\DeclareMathOperator{\interval}{\mathtt{Int}}

\title{Primitive recursive categoricity spectra}

\author{Nikolay Bazhenov}
\address{Novosibirsk State University, Novosibirsk, Russia}
\address{Nazarbayev University, Astana, Kazakhstan}
\email{nickbazh@yandex.ru}

\author{Heer Tern Koh}
\address{Nanyang Technological University, Singapore}
\email{heertern001@e.ntu.edu.sg}

\author{Keng Meng Ng}
\address{Nanyang Technological University, Singapore}
\email{kmng@ntu.edu.sg}

\thanks{Ng was supported by the Ministry of Education, Singapore, under its Academic Research Fund Tier 2 (MOE-T2EP20222-0018) and Academic Research Fund Tier 1 (RG104/24). We also thank David Belanger for the many helpful discussions.}

\date{\today}

\begin{document}

\begin{abstract}
We study the primitive recursive analogue of computable categoricity spectra for various natural classes of structures. We show that these notions coincide for all relatively $\Delta_{2}^{0}$-categorical equivalence structures and linear orders, relatively $\Delta_{3}^{0}$-categorical Boolean algebras, and computably categorical trees as partial orders.
\end{abstract}

\maketitle

\section{Introduction}

A wildly successful application of computability theory has been in the study of discrete mathematical structures. The main objects of interest are \emph{computable structures}, which were first defined in \cite{rab60,mal61} as structures with finitely many relations and functions such that each of the relations and functions are computable. Such structures are also often referred to as having computable presentations. A research program in the study of computable structures is the investigation of \emph{categoricity} (see for example \cite{Gon-80,gd80,remmel81b,remmel81,mccoy03,cchm06,cchm09,chr14,fro15,Baz-21}). Roughly speaking, such investigations provide insight into how `algorithmically complicated' isomorphisms between (classically) isomorphic computable structures are. For instance, if between any two computable presentations of a structure, there exists a computable isomorphism, then we say that such a structure is \emph{computably categorical}. Intuitively, this means that we may obtain an isomorphism between two given copies of the structure in an algorithmic way, and thus, the isomorphisms of such structures are relatively `simple'.

It was observed in \cite{kmn17} that in natural classes of mathematical structures, merely having a computable presentation also guarantees the existence of an `efficient' presentation \cite{grigorieff90,cr91,cr92,cdru09}. Additionally, a key step in extracting an `efficient' presentation from a computable one is in eliminating the use of unbounded search. Thus, to better understand such a phenomenon, it was proposed to investigate mathematical structures from the point of view of \emph{primitive recursive functions}, algorithms that omit the use of unbounded search. Under such a setting, researchers study \emph{punctual structures}, structures with domain $\omega$ such that all of its functions and relations are uniformly primitive recursive. In contrast to computable structures, punctual structures should reveal their points or elements `without delay', reflecting the intuition that a punctual presentation is `faster' or more feasible than a computable one. One of the first theorems regarding punctual structures is as follows.

\begin{theorem}[\cite{kmn17}]\label{thm:puncpres}
    In each of the following classes, every computable structure has a punctual presentation: equivalence structures, linear orders, Boolean algebras, torsion-free Abelian groups, and Abelian $p$-groups.
\end{theorem}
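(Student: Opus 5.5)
The plan is to prove each class separately with a common strategy. Given a computable structure $A$, we build a punctual copy $B$ stage by stage, together with a $\Delta^0_2$ (limit-wise) correspondence $B\to A$. At each stage $s$ we are obliged to add a new element to $B$ and to decide all atomic facts about it primitively recursively. Meanwhile we run the computation of $A$ with a primitive recursive clock: at stage $s$ we only look at the finitely many facts about $A$ that have converged within $s$ steps. The recurring point is that in each of these classes there is a ``junk'' or ``padding'' move that extends $B$ without committing to anything that cannot later be absorbed into a copy of $A$. Making this precise requires a case split on the isomorphism type, often nonuniformly; this is acceptable, since the theorem only asserts existence of a punctual presentation.

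\textbf{Equivalence structures.}
\begin{itemize}[label=$\circ$]
\item If $A$ has infinitely many classes, or some infinite class, we can always add the new element either to a fresh class or to a designated class known to grow.
\item If $A$ has infinitely many classes, then while waiting for $A$ to reveal more we put new elements into new singleton classes. Each waiting class is later grown to match a class of $A$ once that class is seen; a class of $A$ that grows is copied by merging new elements into the matched class.
\item The subtle case is where unbounded class sizes must be matched while we are forced to create singletons. Here we split nonuniformly. If $A$ has infinitely many infinite classes, junk is absorbed there. If $A$ has finitely many infinite classes, we use the $\Sigma_2$ information about their number and about which sizes occur infinitely often (fixed nonuniformly) to place junk in one infinite class. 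If $A$ has no infinite class but infinitely many classes, we use a class size occurring infinitely often, or else grow junk classes to ever larger sizes, each later matched to a distinct class of $A$.
\item The finite case is trivial.
\end{itemize}

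\textbf{Linear orders.} Here the padding move is to place the new element at the right or left end, or into a dense part. We fix the order type nonuniformly.
\begin{itemize}[label=$\circ$]
\item If $A$ has a non-scattered part or infinitely many gaps, new points can be inserted freely and later identified.
\item The standard trick is to treat $A$ as a sum $\sum_{a\in A}$ of blocks. We punctually build $\sum_{a\in A'} F(a)$ for a suitable primitive recursively presented $A'$, where each delayed element becomes a finite block that is eventually collapsed into the correct place. More carefully: elements of $A$ that are not yet computed are represented in $B$ by placeholders adjacent to the last revealed element.
\item Since $A$ is infinite, we handle the ends using whether $A$ has a least or greatest element, a finite amount of nonuniform information.
\end{itemize}

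\textbf{Boolean algebras.} Via the interval algebra of a linear order with least element, this reduces to the linear order case. Since $\interval(L)\cong\interval(L')$ whenever $L$ and $L'$ are suitably related, we take a computable $L$ with $\interval(L)\cong A$, build a punctual copy of $L$, and note that the punctual interval algebra of a punctual linear order is punctual, because Boolean operations on finite unions of intervals are primitive recursive.

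\textbf{Abelian groups.} For torsion-free groups, the padding move is to adjoin a new element as an independent generator when $A$ has infinite rank. Once $A$ enumerates a relation, we instead make the new element a rational combination, keeping a basis-like system primitively recursive. If $A$ has finite rank, we fix a basis nonuniformly and note that the group operation on $\mathbb{Q}^n$-coordinates is primitive recursive. For $p$-groups, we use Ulm-invariant-style padding: adjoin new cyclic summands $\mathbb{Z}/p$ or extend heights, absorbing junk into a summand that is known nonuniformly to occur infinitely often, or into a divisible part.

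I expect the main obstacle to be the linear order case, specifically ensuring that forced placeholder points can always be identified with genuine points of $A$ without creating extra order type. I would first attempt the block construction, with nonuniform case analysis on the end segments of $A$.
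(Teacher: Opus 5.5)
The paper gives no proof of this theorem: it is quoted from \cite{kmn17}, and the linear-order case goes back to Grigorieff \cite{grigorieff90}. So your proposal has to stand on its own, and as written it does not. The linear-order case, which you yourself flag, has no working padding rule, and your Boolean-algebra case is reduced to it. Neither mechanism you offer works:
``Collapsing'' a finite block is impossible in a punctual construction. Every element of $B$ is permanent, and its order relation to all earlier elements is fixed at the moment it is enumerated.
Placing placeholders ``adjacent to the last revealed element'' can strand them in successor gaps, and non-scatteredness does not help. Suppose that in a computable copy of $2*\eta$ the right point $z$ of a block appears before its left partner $x$, and the copy then stalls. A placeholder put immediately right of $x$ lies in the gap $(x,z)$, so no genuine element can ever be identified with it, and $B$ acquires a $3$-element block. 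The mirror-image copy defeats the left-hand choice.

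The missing idea is the one the paper uses for trees in Theorem~\ref{thm:treepres}: pad only where $A$ is guaranteed to supply infinitely many new genuine elements, with that location fixed nonuniformly. There are three cases.
If $A$ has no greatest element, pad at the right end and match each newly revealed element exceeding all earlier ones with the oldest waiting placeholder. The case with no least element is symmetric.
If some non-greatest $x$ has no immediate successor, put each placeholder immediately right of the image of $x$. Match each newly revealed element lying between $x$ and all earlier-revealed elements above $x$ with the oldest waiting placeholder. The predecessor case is symmetric.
Otherwise $A\cong\omega+\zeta*K+\omega^{*}$ is discrete with both endpoints. Here you may pad without ever matching: put each new point immediately after the most recently revealed element, or before it if that element is the greatest. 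Each element is the most recent one at only finitely many stages, and finitely many extra points in each successor gap do not change this order type.

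The torsion-free case has a defect of the same kind. Once an element of $B$ is adjoined as an independent generator, its multiples and its sums with earlier elements are already distinct elements of $B$ with a fixed addition table. So you cannot ``instead make it a rational combination'' when $A$ reveals a relation; only the correspondence $B\to A$ can change. The generator must be re-mapped to an element of $A$ that is independent over the current finite image and respects the finitely many divisibilities already committed in $B$. For instance, use $a+Mc$, with $M$ divisible by every committed divisor and $c$ a fresh guess at an element independent over the current image. You then need a finite-injury argument that each generator is re-mapped only finitely often and that the limit map is onto.

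Your $p$-group paragraph is only a gesture. There need be neither a summand occurring infinitely often nor a divisible part; take $\bigoplus_{n}\mathbb{Z}/p^{n}$. What can be used is that an infinite socle lets you adjoin fresh elements of order $p$ and raise their heights later, while a finite socle forces $A\cong F\oplus\mathbb{Z}(p^{\infty})^{k}$.

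The equivalence-structure case is essentially right. Use fresh singleton classes, later grown, when $A$ has infinitely many classes, and pad inside a fixed infinite class otherwise; your third bullet is unnecessary. The reduction of Boolean algebras to linear orders is also sound in principle: a computable Boolean algebra is $\interval(L)$ for a computable $L$ with least element, and $\interval$ of a punctual order is punctual once normal forms are coded by finite subsets of $\omega$. But it inherits the linear-order gap.
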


In some sense, this reflects the phenomenon that computable structures in natural classes also possess efficient presentations. Following the pattern in computable structure theory, one may consider the question of categoricity for punctual structures. To investigate such questions, we must first fix an appropriate notion of isomorphism. In computable structure theory, computable structures are studied up to computable isomorphisms. As such, punctual structures should be studied up to isomorphisms computable without delay. However, since the inverse of a primitive recursive function is not necessarily primitive recursive, researchers consider \emph{punctual isomorphisms} instead: primitive recursive isomorphisms with primitive recursive inverse. Analogous to computably categorical structures, a structure is \emph{punctually categorical} if there is a punctual isomorphism between any two punctual presentations of such a structure. Just as before, this means that the isomorphisms on such structures are `algorithmically simple'.

\begin{theorem}[\cite{kmn17}]\label{thm:punccat}
\
\begin{enumerate}
    \item An equivalence structure $S$ is punctually categorical iff it is either of the form $F\cup E$, where $F$ is finite and $E$ has only classes of size $1$, or $S$ has only finitely many classes, at most one of which is infinite.

    \item A linear order is punctually categorical iff it is finite.

    \item A Boolean algebra is punctually categorical iff it is finite.
\end{enumerate}
\end{theorem}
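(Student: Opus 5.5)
Each of the three parts splits into a sufficiency direction (the listed structures are punctually categorical) and a necessity direction (every other structure in the class has two punctual copies with no punctual isomorphism between them). Sufficiency is handled by giving an explicit punctual isomorphism. Necessity is handled by building, for each remaining structure, a fixed punctual copy $A$ and a second punctual copy $B$ that diagonalizes against all pairs $(p_e,q_e)$ of primitive recursive functions. The enumeration of these pairs is uniformly computable, but each individual $p_e$ is primitive recursive, and the construction of $B$ only has to stay primitive recursive.

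\textbf{Sufficiency.} For a finite structure we hardcode the isomorphism. For an equivalence structure $F\cup E$, with $F$ finite and every class of $E$ a singleton, we fix finitely many parameters: the elements of $F$ in each copy, together with a stage by which each copy has revealed them. We then map singletons to singletons in order of appearance. Deciding whether a new element is a singleton takes only a bounded check, namely whether it is equivalent to one of the finitely many earlier elements or parameters. So the map and its inverse are primitive recursive.

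For an equivalence structure with finitely many classes, at most one infinite, we fix representatives and the finite classes in each copy as parameters. Every other element goes to the infinite class. We map the $n$-th element of that class in one copy to the $n$-th element of it in the other copy. This is punctual, because the punctuality of the domain $\omega$ guarantees that the $n$-th element of the infinite class appears within primitive recursive time.

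\textbf{Necessity.} The common strategy is to let $B$ delay the revelation of some distinguishing feature. While $B$ waits for $p_e$ to converge on a chosen witness, it keeps growing with elements that are uncommitted and look "generic". Once $p_e(a)$ converges, $B$ commits those elements so that $p_e$ or $q_e$ fails to be an isomorphism or fails to be an inverse.

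For linear orders, $B$ adds new elements at the right end, say, while waiting for $p_e(a)$, where $a$ is the least element of $A$. After $p_e(a)=b$, if $b$ is not the least element of $B$, we are done. If it is, $B$ places a new element to the left of everything. Since the order is infinite, $B$ must stay isomorphic to $A$. To ensure this we copy $A$ in a dovetailed way, using the robust freedom of infinite orders: an infinite order has an infinite ascending or descending chain, and we can insert points into that chain. I expect this isomorphism-preservation step to be the main obstacle. It has to be resolved by choosing the attack according to whether the order has infinitely many points below some element, and similar case splits.

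For Boolean algebras the attack is analogous, splitting an atom or delaying whether an element is an atom. For equivalence structures outside the list, there are either infinitely many nontrivial classes or infinitely many classes of which at least one is infinite or of size $\geq 2$. In these cases $B$ delays the growth of a class, or keeps an element apparently in a singleton class, until $p_e$ commits, and then makes its class size mismatch its image's class size. The requirements are handled by a finite-injury priority argument in which each requirement acts once.
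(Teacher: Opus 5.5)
The paper gives no proof of this statement; it is quoted from \cite{kmn17}. Your proposal therefore has to stand on its own, and it has a genuine gap: the sufficiency direction is fine, but the necessity direction is not proved.

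Sufficiency works. In both listed forms, all but finitely many elements are singletons, respectively lie in the unique infinite class. Once the finite part is fixed as parameters, the $n$-th remaining element of either copy has index at most $n+k$ for a fixed $k$. Matching these elements in order of index is therefore primitive recursive in both directions.

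Necessity is the real content of the theorem, and here you describe attacks but no mechanism that keeps $B\cong A$ through infinitely many of them. You flag this yourself as the main obstacle, and the attacks you do describe fail as stated.

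\textbf{Linear orders.} $A$ need not have a least element (take $\omega^*$, $\zeta$ or $\eta$). Moreover, ``each requirement acts once'' does not bound how often the same place is disturbed. Take $A=\omega$. Every constant function has infinitely many indices, so whatever the current least element $c$ of $B$ is, some later $R_e$ has $p_e\equiv c$ and puts a new point below everything. This happens infinitely often, so $B$ acquires an infinite descending sequence and $B\not\cong\omega$.

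\textbf{Boolean algebras.} Nothing is said about why splitting or delaying atoms preserves an arbitrary infinite isomorphism type.

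\textbf{Equivalence structures.} Your description of the complement of the list is wrong in two ways.
\begin{itemize}
\item It misses the structures with finitely many classes of which at least two are infinite, e.g.\ exactly two infinite classes.
\item Read literally, its second alternative admits one class of size $2$ together with infinitely many singletons. That structure is of the form $F\cup E$ and hence categorical.
\end{itemize}
The proposed win, making a class size differ from that of its image, is also unavailable when cofinitely many classes share a size $N>1$ (e.g.\ all classes of size $2$). All but finitely many classes of $B$ must end up of size $N$, so only finitely many requirements could win this way. In that case you must keep new classes of $B$ incomplete while $p_e$ is evaluated on several fresh classes of $A$, and argue by pigeonhole on indices, as in Type~(ii) of the proof of Theorem~\ref{theo:Delta_1-cat-eq-str}. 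With two infinite classes, one grows a class slowly, as in Type~(iii).

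\textbf{A route that avoids handling every type by hand.} Suppose a structure in one of these classes is not computably categorical. Then two computable copies with no computable isomorphism yield two punctual copies with no punctual isomorphism. This step needs the punctual presentations of Theorem~\ref{thm:puncpres} to be computably isomorphic to the given computable ones, which you would have to check. What remains are the computably categorical structures outside the list. These are exactly the structures handled by the constructions in the proofs of Theorems~\ref{theo:Delta_1-cat-eq-str}, \ref{thm:d1lin} and \ref{thm:d1ba}. Run with a total computable $g$ that is not primitive recursive, those constructions show how to delay without changing the isomorphism type, and so rule out a punctual isomorphism.
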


As it turns out, the only structures (in natural classes) which are punctually categorical are those that are finitistic in some sense. This suggests that the algorithmic nature of various back and forth arguments of categoricity (for infinite structures) like the countable dense linear order with no endpoints, or the random graph, cannot be carried out without unbounded search. In this way, studying mathematical structures using primitive recursive functions provides a new and interesting viewpoint. (See the surveys \cite{bdkmn19,dmn21} for some of the research directions regarding punctual structures.)

\subsection{Preliminaries}\label{sec:prelim}

Recall that the \emph{degree of categoricity} of a computable structure is the least Turing degree $\mathbf{d}$ which computes an isomorphism between any two computable presentations of the structure \cite{fkm10,gon11}. Evidently, to show that some structure has degree of categoricity $\mathbf{d}$, one must first show that $\mathbf{d}$ is sufficient to produce isomorphisms between any two computable presentations of the given structure. Second, to show that such a degree is `sharp', one typically constructs two computable presentations so that any isomorphism $f$ between these presentations is such that $f\geq_{T}\mathbf{d}$. Roughly speaking, the degree of categoricity of a computable structure measures how algorithmically complex isomorphisms between computable presentations of the given structure are. In order to study the primitive recursive analogue of such notions, we require the following definition.

\begin{definition}[\cite{km21}]\label{def:pr}
    Let $f,g:\omega\to\omega$ be total functions. $f$ reduces to $g$, written $f\leq_{PR}g$ if there is some primitive recursive scheme $\Psi$ such that $\Psi^{g}=f$. If we further have that $g\leq_{PR}f$, then we write $f\equiv_{PR}g$. The induced degree structure shall be referred to as the PR-degrees.
\end{definition}

Just like how Turing degrees are a measure of non-computability, the PR-degrees analogously measures the primitive recursive content of various functions. Intuitively, if $f\leq_{PR}g$, then $g$ grows faster than $f$ does, and is thus more `primitive recursively complex' than $f$. Under this setting, and following the ideas and terminology from computable structure theory, we define:

\begin{definition}[\cite{bk21}]\label{def:catspec}
    For a given punctual structure $A$, we use $\prcat(A)$ to denote the set containing all PR-degrees $\mathbf{d}$ such that for any punctual structure $B\cong A$, there is an isomorphism $f:A\to B$ and $\mathbf{d}\geq_{PR}f,f^{-1}$.
\end{definition}

As noted earlier, since the inverse of primitive recursive functions is not necessarily primitive recursive, it follows that for a given function $f$, $f$ and $f^{-1}$ may belong to different PR-degrees. As such, when defining the primitive recursive categoricity spectrum of a structure as above, we require that each PR-degree $\mathbf{d}\in\prcat(A)$ primitive recursively computes both an isomorphism and its inverse.

In computability theory, the Turing degrees $\mathbf{0},\mathbf{0}',\mathbf{0}'',\dots$ serves as a `spine' to classify and rank the algorithmic content of various objects or processes. In a similar vein, we define the following collections of PR-degrees:

\begin{definition}[\cite{bk21}]\label{def:cone}
    For each computable ordinal $\alpha>0$, $\cone(\Delta_{\alpha}^{0})$ is the set containing all PR-degrees $\mathbf{d}$ such that $\mathbf{d}\geq_{PR}f$ for any total $\Delta_{\alpha}^{0}$-function $f$.
\end{definition}

From Definitions \ref{def:cone} and \ref{def:catspec}, it is not difficult to obtain the following:

\begin{fact}\label{fact:conesubspec}
    If $A$ is $\Delta_{\alpha}^{0}$-categorical, then $\cone(\Delta_{\alpha}^{0})\subseteq\prcat(A)$.
\end{fact}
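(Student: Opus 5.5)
Unwinding the definitions gives the proof directly. Fix a PR-degree $\mathbf{d}\in\cone(\Delta_{\alpha}^{0})$, and let $g$ be a total function in $\mathbf{d}$. We must show that for every punctual $B\cong A$ there is an isomorphism $f:A\to B$ with $f\leq_{PR}g$ and $f^{-1}\leq_{PR}g$.

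First, every punctual structure is in particular a computable structure: its domain is $\omega$ and its relations and functions are primitive recursive, hence computable. So $A$ and $B$ are computable copies of the same structure. Since $A$ is $\Delta_{\alpha}^{0}$-categorical, there is a $\Delta_{\alpha}^{0}$ isomorphism $f:A\to B$, that is, a total function $f:\omega\to\omega$ computable in $\emptyset^{(\alpha)}$ (with the usual convention $\Delta^0_\alpha$ meaning computable in $\emptyset^{(\alpha-1)}$ for finite $\alpha$).

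Second, $f^{-1}$ is also a total $\Delta_{\alpha}^{0}$ function. Because $f$ is a bijection $\omega\to\omega$, we have $f^{-1}(y)=\mu x\,[f(x)=y]$. This unbounded search is computable relative to any oracle computing $f$, and it always halts. So $f^{-1}$ is total and computable in the same oracle, hence $\Delta_{\alpha}^{0}$.

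Finally, Definition~\ref{def:cone} applies to both functions: $\mathbf{d}\geq_{PR}f$ and $\mathbf{d}\geq_{PR}f^{-1}$. Hence $\mathbf{d}$ satisfies the condition in Definition~\ref{def:catspec} for $B$. Since $B$ was an arbitrary punctual copy of $A$, we get $\mathbf{d}\in\prcat(A)$.

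The only step needing care is the inverse, because a primitive recursive function can have an inverse that is not primitive recursive. The definition of the cone sidesteps this: it bounds every total $\Delta^0_\alpha$ function, and the set of total $\Delta^0_\alpha$ functions is closed under inverting bijections. It is not a problem that $f^{-1}$ is found by unbounded search, because its PR-reduction to $\mathbf{d}$ is supplied by the cone hypothesis rather than built from $f$.
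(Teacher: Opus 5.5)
Your proof is correct and is exactly the definition-unwinding the paper intends when it says the fact follows from Definitions~\ref{def:cone} and~\ref{def:catspec}. You also handle the one point that needs comment: $f^{-1}$ is again a total $\Delta^0_\alpha$ function, so the cone hypothesis bounds it as well.
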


In some sense, this is the analogue of $\Delta_{\alpha}^{0}$-categoricity of computable structures for punctual structures. In order to show that this bound is sharp, i.e., that $\Delta_{\alpha}^{0}$ is the least primitive recursive degree of categoricity for $A$, we show that $\prcat(A)\subseteq\cone(\Delta_{\alpha}^{0})$, and hence $\prcat(A)=\cone(\Delta_{\alpha}^{0})$. This means that the PR-degrees which primitive recursively compute isomorphisms of $A$ exactly coincide with the PR-degrees which primitive recursively compute $\Delta_{\alpha}^{0}$-functions, and thus, is analogous to the notion of degree of categoricity for computable structures.

We recall that a computable structure $A$ is \emph{relatively $\Delta^0_{\alpha}$-categorical if for any countable copy $B \cong A$, there exists an isomorpishm $f\colon B\cong A$ such that $f \in \Delta^0_{\alpha}(B)$. Relative $\Delta^0_{\alpha}$-categoricity always implies $\Delta^0_{\alpha}$-categoricity, but not vice versa \cite{Gon-77,GHK-05,CFG-09}.}

\

In the present article, we expand upon the work initiated in \cite{km21,bk21,bk24}, investigating the primitive recursive complexity of isomorphisms between punctual structures. We show that in the classes of equivalence structures, linear orders, and Boolean algebras, being relatively $\Delta_{n}^{0}$-categorical generally coincides with $\prcat(A)=\cone(\Delta_{n}^{0})$. More specifically, we show that the respective notions correspond nicely up to relatively $\Delta_{2}^{0}$-categorical equivalence structures; relatively $\Delta_{2}^{0}$-categorical linear orders; relatively $\Delta_{3}^{0}$-categorical Boolean algebras. We also study some relatively $\Delta_{3}^{0}$-categorical linear orders and computably categorical trees as partial orders. One may be tempted to think that such notions always coincide, however, in the companion paper \cite{bbkn25b}, we provide some counterexamples.

\section{Equivalence Structures}\label{sec:equiv}

In \cite{cchm06}, it was shown that the equivalence structures that are computably categorical coincide with the relatively computably categorical ones, and these are exactly one of the following types of equivalence structures:
\begin{itemize}
    \item $E$ has only finitely many finite classes (and possibly cofinitely many infinite classes);
    \item $E$ has only finitely many infinite classes and at most one finite $k$ such that there are infinitely many classes of size $k$.
\end{itemize}
Applying Fact \ref{fact:conesubspec}, such equivalence structures $E$ evidently have the property that $\cone(\Delta_{1}^{0})\subseteq\prcat(E)$. It is not difficult to see that if $E$ is punctually categorical, then $\prcat(E)\not\subseteq\cone(\Delta_{1}^{0})$, as there exist total computable functions which are not primitive recursive. Thus, we consider only equivalence structures which are not punctually categorical in the following theorem.

\begin{theorem}\label{theo:Delta_1-cat-eq-str}
    If $E$ is a relatively $\Delta_{1}^{0}$-categorical equivalence structure that is not punctually categorical, then $\prcat(E)=\cone(\Delta_{1}^{0})$.
\end{theorem}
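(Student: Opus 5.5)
The inclusion $\cone(\Delta_{1}^{0})\subseteq\prcat(E)$ is Fact \ref{fact:conesubspec}, since relative $\Delta^0_1$-categoricity implies computable categoricity. For the reverse inclusion I take a PR-degree $\mathbf{d}\in\prcat(E)$ and an arbitrary total computable function $h$, and show $h\leq_{PR}\mathbf{d}$. Because every total computable function is primitive recursive relative to its own running time, it suffices to handle the case where $h$ is the step-count (halting time) function of a total computation. It is even enough to dominate: if $g\le_{PR}\mathbf{d}$ and $g(n)\geq t(n)$, where $t$ is the running time of a machine computing $h$, then $h(n)$ can be obtained from $g$ by running that machine for $g(n)$ steps. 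So the goal is to build, given $h$, a punctual copy $B\cong E$ such that any isomorphism $f\colon A\to B$ (or its inverse), with $A$ the fixed punctual copy, lets us recover a bound on $t(n)$ primitive recursively.

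I split into cases according to the classification of \cite{cchm06} together with Theorem \ref{thm:punccat}. Since $E$ is not punctually categorical, either (i) $E$ has infinitely many classes of some fixed finite size $k\ge2$ (plus finitely many infinite classes and finitely many other finite classes), or (ii) $E$ has infinitely many infinite classes, or at least two infinite classes with infinitely many classes total. The key step in each case is a delay construction. I build $B$ punctually in stages: at stage $s$ I add new elements so that the domain grows without delay, but I keep the classes that ``encode'' $n$ in an ambiguous shape (for instance, a class of size $1$ that will eventually become size $k$, or a finite class that will eventually be merged or grown into an infinite one) until the computation of $h(n)$ halts. Only then do I resolve the shape. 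Meanwhile the filler elements go into the infinitely many size-$k$ classes, or into infinite classes, so that $B\cong E$ is preserved.

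Given $f\le_{PR}\mathbf{d}$ and $f^{-1}\le_{PR}\mathbf{d}$, I argue as follows. Take a designated element $a_n$ of $A$ lying in a class of the final type, computed primitive recursively. Its image $f(a_n)$ must land in a class of $B$ of the correct isomorphism type. Then searching $B$ up to $\max$ of the $f$/$f^{-1}$-values of the finitely many relevant elements yields a stage by which the class structure there is settled, and this stage bounds $t(n)$. More precisely, I arrange that every element of $B$ with index below the stage at which $h(n)$ converges lies in a class whose type is not yet determined. Then $f^{-1}$ applied to the witness element of $B$ that appears at the halting stage gives, via the primitive recursive inverse correspondence, a number $\ge t(n)$.

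The main obstacle is this ``forcing'' step. The isomorphism need not map the particular classes I control onto the expected ones, since it may send $a_n$'s class to any of the infinitely many classes of the same type. I would handle this by making the \emph{small-index} elements of $B$ all ambiguous for a long time. Concretely, in case (i) I put elements $0,\dots,n$ of $B$ into singleton classes that wait to grow to size $k$ until $h(n)$ halts. Then $f^{-1}$ restricted to enough size-$k$ classes of $A$ must reach into elements of $B$ first placed in a size-$k$ class only after the halting stage. A pigeonhole count (the first $m$ size-$k$ classes of $A$ versus the elements of $B$ already in completed size-$k$ classes at a given stage) converts this into the bound. Case (ii) is analogous, with ``finite vs.\ infinite'' replacing ``size $1$ vs.\ size $k$'', and there I use $f$ rather than $f^{-1}$ as needed.
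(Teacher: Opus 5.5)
The overall strategy (a fast copy, a delayed copy $B$ whose shape encodes halting times, and a pigeonhole over $f$-images of primitive recursively located elements) is the paper's. Your treatment of cofinitely many classes of one size $k\ge 2$, or cofinitely many infinite classes, is essentially the paper's Type (ii). The genuine gap is the case analysis.

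\textbf{Missing cases.} Combining the classification of \cite{cchm06} with the negation of Theorem~\ref{thm:punccat} leaves two families that neither of your cases contains:
\begin{itemize}
\item structures with only finitely many classes, at least two of them infinite (e.g.\ $E$ consisting of exactly two infinite classes);
\item structures with exactly one infinite class and cofinitely many singletons.
\end{itemize}
Your clause ``at least two infinite classes with infinitely many classes total'' excludes both. The size-$k$ mechanism also does not carry over to them by analogy.

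With finitely many classes there is no infinite supply of classes to pigeonhole over, and no class is ever completed, so counting completed classes yields nothing. The paper (Type (iii)) does the following:
\begin{itemize}
\item it grows one infinite class of $B$ at every stage as padding;
\item it keeps each other infinite class at size $x$, where $x$ is largest such that $h(0),\dots,h(x-1)$ have all halted by the current stage;
\item it applies $f$ to the first $n+1$ elements of two distinct infinite classes of $A$. One of them must go to a slow class, and $n+1$ distinct elements there force an index beyond the stage at which $h(n)$ halts.
\end{itemize}

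With a single infinite class and singletons, a singleton has no ambiguous intermediate shape to be held in. The paper (Type (i)) instead delays the \emph{creation} of singleton classes in $B$, putting the padding into the infinite class. It then applies $f$ to the first $n+1$ singleton classes of $A$. Keeping the infinite class of $B$ slow and padding with singletons would also work. In both cases the pigeonhole has a different form from the one you describe, and you must supply it.

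\textbf{Smaller points.}
\begin{itemize}
\item ``Merging'' classes is not available. In a punctual construction the equivalence between elements already present must be decided, primitive recursively, when they appear.
\item The recovery should use $f$ on elements of a copy $A$ that you build to be fast, composing the isomorphisms from the given copy onto $A$ and onto $B$ that $\mathbf{d}$ provides. In an arbitrary punctual copy you cannot primitive recursively locate $n+1$ classes of size $k$.
\item Applying $f^{-1}$ to ``the element of $B$ that appears at the halting stage'' cannot be done without already knowing that stage.
\item In the size-$k$ case the padding must go into fresh classes held at size $1$ (or into an infinite class, if $E$ has one). It must never go into completed size-$k$ classes, which would give $f$ small-index targets.
\end{itemize}
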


\begin{proof}
Recall from Theorem \ref{thm:punccat} that an equivalence structure is punctually categorical iff it has at most one infinite class and finitely many other classes, or it has only finite classes with cofinitely many of size $1$. In order for an equivalence structure, $E$, to be relatively $\Delta_{1}^{0}$-categorical and not punctually categorical, it has to be one of the following types. 
\begin{enumerate}[label=(\roman*)]
    \item $E$ has cofinitely many classes of size $1$. In order for $E$ to not be punctually categorical, $E$ must also have at least one infinite class.

    \item Not the previous type; $E$ has cofinitely many classes of the same size $>1$.

    \item Finitely many classes with at least two infinite classes.
\end{enumerate}
Roughly speaking, for a given total $g$, to define $h$ so that $g\leq_{PR}h$, we encode the halting times of $g(x)$ in $h(x)$. Then, to primitive recursively recover $g(x)$ from $h(x)$, we simply compute $g(x)$ until the $h(x)$-th stage and return the corresponding value. To show that $\prcat(E)\subseteq\cone(\Delta_{1}^{0})$, given any total computable function $g$, we construct equivalence structures $A,B\cong E$ such that $g\leq_{PR}h\oplus h^{-1}$ for any isomorphism $h:A\to B$.

First consider equivalence structures of Type (i). We construct $A,B\cong E$ as follows. At stage $0$, enumerate all classes of finite size $\neq 1$ into both $A$ and $B$, and non-uniformly fix the number of infinite classes in $E$. At each stage $s$, do the following.
\begin{enumerate}
    \item Enumerate a new element into each infinite class in both $A$ and $B$. This ensures that $A$ and $B$ remain punctual.

    \item Enumerate a new class of size $1$ into $A$.

    \item If for some $x,\,g(y)[s]\downarrow$ for all $y<x$, then enumerate the $x$-th class of size $1$ into $B$ (if it has yet to be enumerated).
\end{enumerate}
The intuition here is obviously to keep $A$ `standard' while delaying enumerating classes of size $1$ into $B$, thus encoding $g$. Let $h:A\to B$ be an isomorphism. To recover $g(x)$, compute $h$ on the first $x+1$ many classes in $A$ of size $1$. Observe that the indices of such elements can be recovered in a primitive recursive way by enumerating $A$ up to stage $x+1$. Since $h$ is an isomorphism, this has to produce the indices of $x+1$ many distinct classes of size $1$ in $B$. By construction, the $x+1$-th class of size $1$ in $B$ is enumerated only after $g(y)[s]\downarrow$ for all $y<x+1$ is witnessed. In particular, at least one of the $x+1$ many distinct classes obtained from $h$ has index at least as big as the stage at which $g(x)\downarrow$.

Now consider equivalence structures of Type (ii). Once again, we shall construct $A,B\cong E$. Let $N>1$ be the size (possibly infinite) which is repeated cofinitely often in $E$. As before, we enumerate all classes not of this size $N$ in both $A$ and $B$ in a `standard' way. During stage $s$, do the following.
\begin{enumerate}
    \item If $N$ is finite, then enumerate a class of size $N$ into $A$. Otherwise, enumerate a new class into $A$, and for each class not in the `standard' part, ensure that it contains $s+2$ many elements. In the limit, it is obvious that $A\cong E$.

    \item Enumerate a new class into $B$, leaving it temporarily at size $1$. This ensures that $B$ remains punctual.

    \item If there is some $x$ such that for all $y<x,\,g(y)[s]\downarrow$, then ensure that the first $x$ many classes of $B$ contains $N$ many elements, if $N$ is finite, or $s+2$ many elements otherwise.
\end{enumerate}
The argument here is similar to the one before. $A$ is the `fast' copy, producing elements on which we encode $g$ in a primitive recursive way. $B$ on the other hand is the `delayed' copy, only enumerating elements when $g\downarrow$. Instead of `delaying' via growing only an infinite class in $B$ like in the strategy for Type (i) equivalence structures, we `delay' $B$ via enumerating new classes and leaving them at size $1$. Once $g(x)\downarrow$, then we grow the classes to the size $N$.

Let $h:A\to B$ be an isomorphism. To compute $g(x)$, first compute $h$ on the first two elements of the first $x+1$ many classes of size $N$. This obviously produces the indices of two elements in $x+1$ many distinct classes of size $N$ in $B$. Since the element with the second least index in the $x+1$-th class in $B$ is at least the stage at which $g(y)\downarrow$ for all $y<x+1$, by taking the maximum over the indices of elements obtained by computing $h$ allows us to recover $g(x)$.

Finally, we consider equivalence structures of Type (iii). As there are only finitely many finite classes, we may non-uniformly fix them and enumerate them into both $A$ and $B$ at stage $0$. Let $M\geq 2$ be the number of infinite classes. At stage $s$, do the following.
\begin{enumerate}
    \item In $A$, enumerate $M$ new elements, adding one to each infinite class of $A$.

    \item In $B$, only enumerate $1$ new element, adding it to the first infinite class of $B$. For the rest of the $M-1$ infinite classes in $B$, we keep them at the size $x$ where at stage $s$, $g(y)[s]\downarrow$ for all $y<x$.
\end{enumerate}
A similar argument as before works. Let $h:A\to B$ be an isomorphism. To find $g(x)$, simply compute $h$ on the first $x+1$ many elements of two infinite classes in $A$. By pigeonhole principle, at least one such class must be mapped by $h$ to a `slow' infinite class in $B$. Furthermore, one such image has index at least as large as the stage at which $g(x)\downarrow$. Theorem~\ref{theo:Delta_1-cat-eq-str} is proved.
\end{proof}

\

\begin{theorem}\label{thm:d2equiv}
    If $E$ is a relatively $\Delta_{2}^{0}$-categorical equivalence structure which is not relatively $\Delta^0_1$-categorical, then $\prcat(E)=\cone(\Delta_{2}^{0})$.
\end{theorem}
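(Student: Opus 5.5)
Relatively $\Delta_2^0$-categorical implies $\Delta_2^0$-categorical, so Fact \ref{fact:conesubspec} gives $\cone(\Delta_2^0)\subseteq\prcat(E)$. The work is the reverse inclusion. By the classification of \cite{cchm06}, a relatively $\Delta_2^0$-categorical equivalence structure has either finitely many infinite classes, or a bound on the sizes of its finite classes. Since $E$ is not relatively $\Delta^0_1$-categorical, it falls outside the two types listed at the start of this section. So $E$ has infinitely many finite classes, and either it has infinitely many infinite classes, or it has at least two finite sizes $k\neq l$ each repeated infinitely often.

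Fix a total $\Delta_2^0$ function $g$ with computable approximation $g(x)[s]$, and let $m(x)$ be the least stage after which $g(y)[s]$ has settled for all $y\le x$. It suffices to build punctual $A,B\cong E$ such that every isomorphism $h:A\to B$ satisfies $m\le_{PR}h\oplus h^{-1}$. Given $m(x)$, we recover $g(x)=g(x)[m(x)]$ primitive recursively. To see that it suffices to bound $m(x)$, note that any primitive recursive bound $b(x)\ge m(x)$ computed from $h\oplus h^{-1}$ lets us output $g(x)[b(x)]$. As in Theorem \ref{theo:Delta_1-cat-eq-str}, $A$ will be the `standard' fast copy. $B$ will encode the approximation by changing its mind on designated witness classes, and these changes must be made in a way that is punctual and correct in the limit.

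\textbf{Case 1: two sizes $k<l$ repeated infinitely often.} Here I would non-uniformly handle the rest of $E$ and pair up infinitely many classes of size $k$ with classes of size $l$, fixing their order in $A$. In $B$, witness class $c_x$ starts at size $k$. Whenever the approximation to $g\restriction(x+1)$ changes at stage $s$, we grow $c_x$ to size $l$ and add a fresh class of size $k$ to compensate. This preserves the isomorphism type because sizes only grow, and each growth is balanced by a fresh class, so counts match in the limit. To recover a bound on $m(x)$, compute $h^{-1}$ on the elements of the first $x+1$ witness classes of $B$, or $h$ on the relevant classes of $A$. I expect the argument to use the fact that an element's index is at least the stage at which it was enumerated, so the images reveal a stage after which the witness's final size is visible. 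The delicate point is making the final size of $c_x$ correspond to a stage beyond $m(x)$. I would try letting the new elements be the ones added at the change, so that at least one image has index at least the last change stage. If $k$ and $l$ are not both finite, I would adapt by using an infinite class in place of $l$.

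\textbf{Case 2: infinitely many infinite classes and infinitely many finite classes.} Since $E$ is relatively $\Delta_2^0$-categorical, the finite sizes are bounded, so some size $k$ occurs infinitely often. The coding trick is to turn a size-$k$ class of $B$ into an infinite one upon each change and add a new $k$-class. The recovery then parallels Case 1: apply $h$ to the $(k+1)$st element of an infinite class of $A$ and its preimage structure, forcing index information.

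The main obstacle is the recovery step. We must ensure that the maximum index among the images under $h$ and $h^{-1}$ of primitive-recursively located elements really exceeds the last change stage $m(x)$, and does not merely exceed the stage at which the value first converged. This likely requires arranging that each witness in $B$ is reused: after a change, the old witness class becomes ``garbage'' matched with fresh classes, and the current witness for $x$ is always the newest one. Its least element index then bounds the last mind-change.
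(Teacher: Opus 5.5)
\textbf{Gap 1: the case split misses the hardest case.} You built your case split by negating the two types listed at the start of Section~\ref{sec:equiv}. That list omits a clause of the characterisation in \cite{cchm06}: the second type of computably categorical structure must also have bounded character. So you miss every $E$ with finitely many infinite classes, unbounded character, and at most one finite size occurring infinitely often.

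For example, take $E$ with exactly one class of each finite size $n\ge 1$. It is relatively $\Delta^0_2$-categorical (it has no infinite classes) and not computably categorical. Yet it has neither infinitely many infinite classes nor two infinitely repeated sizes, so neither of your cases applies. Your mechanism does not adapt either. There is no infinitely repeated size whose classes you could grow and replace, and $A$ offers no infinite supply of classes of a known common size to pigeonhole over, since the final size of each class is known only in the limit.

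This is exactly Lemma~\ref{lem:d2fininf}, and it needs a different idea:
\begin{itemize}
\item Whenever $g(x,\cdot)$ changes at stage $s$, the classes of $B$ of index $<s$ not already claimed for some $y<x$ are tagged with $x$. They are kept strictly larger than every potential witness for $R_x$ in $A$.
\item This is done through a $\Delta^0_2$ matching $p\colon B\to A$, revised whenever a witness grows. It uses unbounded character to find ever larger classes.
\item Classes tagged for $y<x$ may share a witness's size, so $R_x$ needs a block of $n_x+1$ potential witnesses. The recovery $\Psi^h$ must then reconstruct these block sizes recursively from $h$.
\end{itemize}

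\textbf{Gap 2: the recovery step is not in place, even in the cases you cover.} You flag this yourself. Your Case~2 suggestion fails outright: applying $h$ to an element of an infinite class of $A$ forces nothing. The map $h$ may send that class to an infinite class of $B$ all of whose elements have small indices.

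What works (Lemma~\ref{lem:d2boundchar}) is a counting invariant on all classes of one size. Let $M$ be the least size occurring infinitely often, and fix the finitely many smaller classes non-uniformly. Whenever $g(x,s)\neq g(x,s-1)$, grow size-$M$ classes of $B$ so that at most $x$ of the first $s$ classes of $B$ have size $M$. Then $h$ applied to $x+1$ standard size-$M$ classes of $A$ must reach index $\ge s_x$.

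This invariant rules out copying an uncontrolled ``rest of $E$'' into $B$, since its size-$M$ classes would have small indices. Instead, grown classes of $B$ are matched to classes of $E$ currently of size $>M$. This relies on the minimality of $M$: apart from the finitely many fixed classes, every class of $E$ not of final size $M$ eventually shows size $>M$.

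Finally, your per-$x$ replaceable witnesses $c_x$ probed by $h^{-1}$ do not work as stated. The final $c_x$ cannot be located primitive recursively.
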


Applying the characterisation in \cite[Corollary 4.8]{cchm06}, the relatively $\Delta_{2}^{0}$-categorical equivalence structures have either bounded character (this means that the equivalence structure has only finitely many different finite class sizes), or finitely many infinite classes. Evidently, we may assume that the equivalence structure has infinitely many classes, otherwise it would be computably categorical. We split the proof of Theorem \ref{thm:d2equiv} into Lemmas \ref{lem:d2boundchar} and \ref{lem:d2fininf}, addressing the two possibilities respectively.

\begin{lemma}\label{lem:d2boundchar}
    If $E$ is an equivalence structure with bounded character and infinitely many classes, then $\prcat(E)=\cone(\Delta_{2}^{0})$.
\end{lemma}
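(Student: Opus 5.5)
The plan is to prove the two inclusions separately. Throughout I assume, as in the setting of Theorem \ref{thm:d2equiv}, that $E$ is not relatively $\Delta^0_1$-categorical. Without this assumption the lemma fails: an $E$ with all classes of size $1$ is punctually categorical.

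\emph{Upper bound.} By \cite[Corollary 4.8]{cchm06}, an equivalence structure of bounded character is relatively $\Delta^0_2$-categorical. Fact \ref{fact:conesubspec} then gives $\cone(\Delta^0_2)\subseteq\prcat(E)$.

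\emph{Choosing the two sizes.} For the reverse inclusion, fix a total $\Delta^0_2$ function $g$ with a computable approximation $g(x)=\lim_s g_s(x)$. The goal is to build punctual $A,B\cong E$ such that $g\leq_{PR}h\oplus h^{-1}$ for every isomorphism $h\colon A\to B$. Since $E$ has bounded character, infinitely many classes, and is not computably categorical (by the list at the start of this section), there are two sizes $k<l$ with $l\in\omega\cup\{\omega\}$, each realised by infinitely many classes. I non-uniformly fix all remaining classes and enumerate them identically into $A$ and $B$. There are only finitely many of them, apart from possibly one further infinitely repeated size, which can be handled in the same ``standard'' way in both copies.

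\emph{Coding into $A$ and $B$.} For each pair $\langle x,s\rangle$, the copy $A$ contains a class $c_{x,s}$. It is created at stage $s$ with $k$ elements and kept at size $k$ unless the approximation to $g(x)$ changes at some stage $t>s$. In that case it is grown to size $l$, adding one element per stage if $l=\omega$. Thus $c_{x,s}$ has final size $k$ iff $g_s(x)=g(x)$, which is a $\Pi^0_1$ condition. The copy $B$ is \emph{honest}: every class of size $l$ is born with all its elements (if $l=\omega$, it is born with $k+1$ elements and is marked by being grown at every stage), and every class of size $k$ is born with exactly $k$ elements and never changes. To keep $B\cong A$, whenever some $c_{x,s}$ grows in $A$ I add a fresh $l$-class to $B$. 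I also add fresh $k$-classes to $B$, but only at a controlled, delayed rate, as in the Type (i) and (ii) arguments of Theorem \ref{theo:Delta_1-cat-eq-str}. Padding both copies with extra classes of each size keeps both punctual and keeps the counts of $k$- and $l$-classes infinite on both sides.

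\emph{Recovering $g$, and the main obstacle.} Since $B$ is honest, the size of the $B$-class containing $h(c_{x,s})$ is primitive recursive in $h$: it is read off from the stage at which $h$ of any element of $c_{x,s}$ was born. Hence $h$ primitive recursively decides the $\Pi^0_1$ predicate ``$g_s(x)$ is final'', and in particular $\emptyset'\leq_{PR}h$.

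The hard step, which I expect to be the main obstacle, is to bound primitive recursively the search for the least $s$ with $g_s(x)=g(x)$. The settling time of a $\Delta^0_2$ function need not be dominated by any computable function, so the bound must come from $h^{-1}$. My first attempt is to delay $B$: the $k$-class of $B$ that is meant to match $c_{x,s}$ is enumerated only after $g_{s'}(x)$ has been stable from $s$ up to the current stage $s'$. Then $h^{-1}$, applied to the first few $k$-classes of $B$ born after the $\langle x,\cdot\rangle$-block opens, must point into $A$-classes $c_{x,s}$ with $s$ at least the true settling stage. Taking maxima over these values, as in Theorem \ref{theo:Delta_1-cat-eq-str}, yields a primitive recursive search bound.

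If this direct delay cannot be made consistent with $A\cong B$, for instance because of the infinite-$l$ case, the fallback is to first reduce $g$ to the pair $\emptyset'$ together with a $\emptyset'$-modulus. I would then code $\emptyset'$ via $h$ as above and code the modulus via halting-time delays into $B$ read off by $h^{-1}$, exactly as in the proof of Theorem \ref{theo:Delta_1-cat-eq-str}.
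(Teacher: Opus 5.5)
Your preliminary points are sound: the lemma as stated does need $E$ not relatively $\Delta^0_1$-categorical (the paper uses this tacitly when it says $M$ cannot occur cofinitely often). The upper bound via \cite{cchm06} and Fact~\ref{fact:conesubspec} is correct, and two infinitely repeated sizes $k<l$ exist. Write $g^*=\lim_s g(x,s)$ and let $s_x$ be the settling stage of $g(x,\cdot)$.

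The gap is in the lower bound, exactly at the step you flag, and neither of your routes closes it. Your first attempt assumes that $h^{-1}$ sends the $k$-classes of $B$ ``meant to match'' $c_{x,s}$ onto classes $c_{x,s}$. But an isomorphism of equivalence structures may permute classes of equal size arbitrarily. Those classes can go to padding $k$-classes of $A$, or to final $c_{y,s}$ with $y\neq x$, whose indices say nothing about $s_x$. A pigeonhole count cannot repair this, because your $A$ has many final $k$-classes of small index: the padding, and every $c_{y,s}$ with $s\ge s_y$. The direction is also backwards. To apply $h^{-1}$ to delayed classes of $B$ you must first locate them, which already needs the missing bound; the pigeonhole trick maps classes found quickly in one copy and reads large indices in the other. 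Worse, an honest copy can never be slow in the required sense. If the $k$-classes of $B$ are permanent from birth, the birth stage of the $x$-th one is computable from $x$. It then bounds $s_x$ for all $x$ only if $g^*$ is computable. The delays in Theorem~\ref{theo:Delta_1-cat-eq-str} work because convergence is $\Sigma^0_1$, whereas ``$s\ge s_x$'' is $\Pi^0_1$.

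The fallback fails for the same reason. Neither the settling-time function nor a modulus of $\emptyset'$ is computable, so there is no total computable function whose halting times you could code as in Theorem~\ref{theo:Delta_1-cat-eq-str}. Coding $\chi_{\emptyset'}$ through $h$ adds no growth, since every function primitive recursive in a $\{0,1\}$-valued oracle is bounded by a primitive recursive function.

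The missing idea is \emph{retraction} in the slow copy, which is what the paper does. Let $A$ place a $k$-class at every even position. Let $B$ create $k$- and $l$-classes alternately. Whenever $x$ is least with $g(x,s)\neq g(x,s-1)$, grow every current $k$-class among the first $s$ classes of $B$, except the first $x$ of them, to size $l$. (The paper takes $k=M$, the least infinitely repeated size, grows these to $M+1$, and matches them computably with the classes of $E$ of size $>M$; in your two-size setting they can simply become $l$-classes.) Induction on $x$ shows the $x$-th surviving $k$-class of $B$ exists and has index at least $s_x$, so $B\cong E$. For any isomorphism $h$, the images of the first $x+1$ $k$-classes of $A$ are $x+1$ distinct $k$-classes of $B$, so one has index at least $s_x$. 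Then $g(x,N)$, with $N$ the maximal index obtained, equals $g^*(x)$. This uses only $h$, and needs neither an honest copy nor decisions of $\Pi^0_1$ predicates.
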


\begin{proof}
If $E$ has cofinitely many infinite classes, then $E$ is computably categorical. We may thus assume that $E$ has infinitely many finite classes. Since $E$ has bounded character, let $M\in\omega$ be the least size such that $E$ has infinitely many classes of size $M$. For the same reasons as before, $M$ cannot occur cofinitely often. For a given total $\Delta_{2}^{0}$-computable function, $g^{*}$, we consider the primitive recursive approximation $g(x,s)$ so that $\lim_{s}g(x,s)=g^{*}(x)$. Once again, in order to recover the value of $g^{*}(x)$ primitive recursively, we encode the \emph{stabilising stage} $s_{x}$, such that for each $s\geq s_{x},\,g(x,s)=g(x,s_{x})$. As long as we have a way to obtain $s_{x}$, we may primitive recursively recover the value of $g^{*}(x)=\lim_{s}g(x,s)=g(x,s_{x})$.

\

\emph{Encoding $s_{x}$:} Given a computable equivalence structure $E$ with bounded character and infinitely many classes, we construct $A,B\cong E$ punctual equivalence structures with the following properties. $A$ will enumerate classes of size $M$ `quickly'. Every even indexed class of $A$ will be a class of size $M$, while we `copy' $E$ into the odd indexed classes of $A$. Since $E$ consists of infinitely many classes of size $M$, the resulting structure is clearly still isomorphic to $E$.

In contrast, $B$ will produce classes of size $M$ extremely sparsely. For each $x$, whenever $g(x,s)\neq g(x,s-1)$, we ensure that within the first $s$ many classes of $B$, there are at most $x$ many classes of size $M$. By the choice of $M$, we may in fact also assume that it is the smallest class size in $E$, as there are only possibly finitely many classes of smaller size, which can be fixed non-uniformly. That is, we are able to decrease the number of classes of size $M$ in the first $s$ many classes by increasing the classes of size $M$ to some larger size. By computing the given isomorphism on the first $x+1$ many classes of size $M$ in $A$, it is not too hard to see that the stabilising stage $s_{x}$ may be recovered. The only remaining concern is how this strategy might be executed while ensuring $B\cong E$.

\

\emph{Ensuring $B\cong E$:} When constructing $B$, other than the possibly finitely many classes of smaller size, we alternately enumerate classes of size $M$ and $M+1$ into $B$. We shall build a computable (not total) function $f:B\to E$ such that the domain and range of $f$ are exactly those classes of $B$ and $E$ respectively that are not of size $M$. Let $e_{i}$ denote the $i^{th}$ class enumerated into $E$. Whenever we need to grow some class $b$ in $B$ to a size $>M$ (as dictated by the strategy for encoding $s_{x}$), first grow the class to size $M+1$. Then, search for the least $i$ where $e_{i}$ currently has size $>M$ and is not in the range of $f$. Once found, define such an $e_{i}$ to be the $f$ image of $b$. For all subsequent stages, let the size of $b$ be the same as the size of $e_{i}$. With some care, it can be easily arranged to define $f$ on all classes in $B$ which grows to a size $>M$. Similarly, all classes in $E$ of size $>M$ must eventually enter the range of $f$, provided that there are infinitely many classes in $B$ of size $>M$. That is, $f$ is an isomorphism between the classes of $B$ and $E$ of size $>M$. It follows from the assumption that there exists infinitely many such classes, otherwise $E$ will be computably categorical.

\

\emph{Construction:} At stage $0$, enumerate the finitely many classes of size $<M$ into both $A$ and $B$. All subsequent classes in $A$ and $B$ shall be denoted as $a_{0},a_{1},\dots$ and $b_{0},b_{1},\dots$ respectively. At stage $s$, enumerate the classes $a_{s}$ and $b_{s}$ into $A$ and $B$ respectively. If $s$ is even, then let the size of both $a_{s}$ and $b_{s}$ be $M$. Otherwise, let the size of $a_{s}$ be the size of $e_{(s-1)/2}$, and let the size of $b_{s}$ be $M+1$. Let $x<s$ be the least for which $g(x,s)\neq g(x,s-1)$. If no such $x$ exists, then proceed to stage $s+1$. Otherwise, let $b_{i}$ denote the $x^{th}$ class of size $M$ in $B$, and we increase the size of $b_{j}$ for each $i\leq j<s$ of size $M$ to $M+1$.

At each stage $s$, also pick the class $b$ in $B$ of size $M+1$ with the least index such that $f(b)\uparrow$. Search for the least $i$ for which $e_{i}$ currently has size $>M$ and is not yet in the range of $f$. Once found, define $f(b)=e_{i}$, and for all subsequent stages, let the size of $b$ be the same as the size of $e_{i}$. If no such $e_{i}$ can be found, then proceed to stage $s+1$ (note that this search is primitive recursive).

\

\emph{Verification:} It follows directly from the construction that $A\cong E$. Now we claim that $f$ is an isomorphism between all classes in $B$ and $E$ of size $>M$. $f$ is clearly injective, and once $f(b)\downarrow$, $b$ is maintained to be the same size as $f(b)$. For any given $b$ of size $>M$, it is clear that it must eventually be the class in $B$ of lowest index; the construction will attempt to define $f(b)$ at the next possible instance. Since $E$ is has infinitely many classes of size $>M$, an appropriate $f$ image for $b$ must always be found. In addition, as we always pick the class with the least possible index as the image, $f$ is also surjective on classes in $E$ of size $>M$. Thus, $f$ is an isomorphism between all classes of size $>M$ in $B$ and $E$.

Now we prove by induction that there exists infinitely many classes of size $M$ in $B$, and that the $x^{th}$ such class exists and has index $\geq s_{x}$, the stabilising stage for $x$. Since $g^{*}$ is assumed to be total, $s_{0},s_{1},\dots$ all exist. We further assume that $s_{x}$ is the least possible stage at which $g(x,s)$ stabilises.

For any $i<s_{0}$, at stage $s_{0}$, we must have discovered that $g(0,s_{0})\neq g(0,s_{0}-1)$, and thus changed the size of $b_{i}$ still of size $M$ to $M+1$. For all subsequent stages, observe that the construction never changes the size of $b_{2k}$ where $k$ is the least for which $2k\geq s_{0}$. Any change in $g(x,s)$ for $x>0$ only affects the $y^{th}$ class of size $M$ for $y\geq x$.

Now assume inductively that for each $y<x$, the $y^{th}$ class of size $M$ exists, and has index $\geq s_{y}$. Let these classes be indexed by $b_{i_{0}},b_{i_{1}},\dots,b_{i_{x-1}}$. For stages $s>s_{x-1}$, we may possibly only discover $g(z,s)\neq g(z,s-1)$ for some $z\geq x$. By the construction, whenever such $z$ and $s$ are found, we only change the classes with index at least as big as the current index of the $z^{th}$ class of size $M$. In other words, no such action may change the sizes of the classes $b_{i_{y}}$ for any $y<x$. By the construction, at stage $s_{x}$, observe that any class of size $M$ with index $i$ where $i_{x-1}<i<s_{x}$ will be changed to size $M+1$, as we discovered that $g(x,s_{x})\neq g(x,s_{x}-1)$. After such a stage $s_{x}$, the size of $b_{2k}$ where $k$ is the least such that $2k\geq s_{x}$ remains forever at $M$.

Applying the claim allows us to conclude that $B$ has infinitely many classes of size $M$, and is thus isomorphic to $E$, and also that for each $x$, the $x^{th}$ class of size $M$ in $B$ has index $\geq s_{x}$. With the latter, we may recover $g^{*}$ as follows. Let $h:A\to B$ be an isomorphism. Given $x$, compute $h(a_{i})$ for each $i<2x$. Recall that $a_{i}$ has size $M$ for even $i$ and are classes from the given computable presentation $E$ for odd $i$. That is, there are at least $x+1$ many classes of size $M$ on which we compute $h$. By pigeonhole principle, at least one of these $h$ images must land on a class of $B$ with index $\geq s_{x}$. By computing $g(x,s)$ where $s$ is the maximum of the indices of all $h(a_{i})$ for each $i<2x$, we obtain $g^{*}(x)$. Lemma~\ref{lem:d2boundchar} is proved.
\end{proof}

\

\begin{lemma}\label{lem:d2fininf}
    If $E$ is an equivalence structure with finitely many infinite classes and unbounded character, then $\prcat(E)=\cone(\Delta_{2}^{0})$.
\end{lemma}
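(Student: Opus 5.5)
One inclusion is easy. By the characterisation in \cite[Corollary 4.8]{cchm06}, $E$ is relatively $\Delta_2^0$-categorical, so Fact~\ref{fact:conesubspec} gives $\cone(\Delta_2^0)\subseteq\prcat(E)$. For the reverse inclusion we fix a total $\Delta_2^0$ function $g^*$ with primitive recursive approximation $g(x,s)$, and let $s_x$ be its least stabilising stage. We then build punctual $A,B\cong E$ such that from any isomorphism $h:A\to B$ (in fact from $h$ alone) we can primitive recursively bound $s_x$. As in Lemma~\ref{lem:d2boundchar}, this yields $g^*(x)=g(x,s)$ for any such bound $s$.

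Non-uniformly, let $K\ge 1$ be the number of infinite classes of $E$; since $E$ has unbounded character, it has infinitely many finite classes. The plan is to use the infinite classes as a reservoir for punctuality. Every element not needed elsewhere is dumped into an infinite class, so both copies can reveal an element at every stage. The finite classes are then encoded slowly in $B$, with the delay tied to $s_x$.

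\emph{Construction of $A$.} $A$ is the fast copy. At stage $s$ it adds one element to each of its $K$ infinite classes, and it copies $E$'s finite part: the class $e_i$ of $E$, as it currently appears, is mirrored in a class $a_i$ of $A$. If $e_i$ later turns out to be infinite, then $a_i$ grows along with it. This is consistent because $E$ is computable and a class grows at most once per stage; a simple pairing arrangement keeps the isomorphism type correct. Padding elements are sent to a designated infinite class.

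\emph{Construction of $B$.} $B$ is the slow copy. A new class $b_j$ is opened at every stage at size $1$, the same trick as Type (ii) in Theorem~\ref{theo:Delta_1-cat-eq-str}. Whenever $g(x,s)\ne g(x,s-1)$, every class $b_j$ with $j<s$ that has not yet been committed to represent some finite $e_i$ of index $\ge x$ is grown until it can be matched with an infinite class. To make that legitimate, $B$ keeps infinitely many ``candidate infinite'' classes: a class that is grown at infinitely many stages becomes infinite, and we must ensure only $K$ of them do. So instead of growing the class itself, we merge its role into a reserved class, or we use the $\Delta_2$-guessing of which classes of $E$ are infinite. Committed matchings $b_j\mapsto e_i$ are assigned in order, and $b_j$ follows $e_i$ in size, exactly as the partial map $f$ in Lemma~\ref{lem:d2boundchar}.

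The intended property is that the first $x+1$ finite classes of $A$ can only be mapped by $h$ onto classes of $B$, at least one of which has index $\ge s_x$. Computing $h$ on the first two elements of these classes in $A$ and taking the maximum index then bounds $s_x$.

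The main obstacle is the lack of a safe size to inflate to. In Lemma~\ref{lem:d2boundchar}, bounded character let us destroy a size-$M$ class by bumping it to $M+1$. Here the finite class sizes are unbounded, and an element of $B$ displaced by a change in $g$ cannot be parked in an infinite class without creating extra infinite classes. Instead, I would exploit unbounded character directly. A class of $B$ killed at stage $s$ is grown to a size larger than every finite class size seen in $E$ so far, and matched later to some genuinely large finite class of $E$, which exists by unbounded character. The key verification is that each matching is eventually fixed after $s_x$. This gives a finite-injury argument, ordered by $x$, showing that $B\cong E$ and that the $x$-th class of a given size occurs in $B$ only at index $\ge s_x$.
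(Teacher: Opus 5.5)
\textbf{There is a genuine gap at the counting step.} You take the witnesses for $R_x$ to be a fixed block of $x+1$ classes of $A$ (the copies of $e_0,\dots,e_x$) and claim that some $h$-image lies at index $\ge s_x$. But the classes of $B$ with index below $s_x$ are not only the ones killed for $R_x$, which you keep above the witnesses' sizes. They also include up to $s_y$ classes killed earlier for each $R_y$ with $y<x$, plus the classes committed to earlier witnesses.

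A class killed for $R_y$ is only required to exceed the sizes of $R_y$'s witnesses, and it must end with some finite size occurring in $E$. Every class of $A$ eventually lies in some later block, so you cannot make these classes avoid the sizes of all later witnesses. Concretely, suppose $|e_1|>|e_0|$ and one of the $s_0$ classes killed for $R_0$ is matched to a class of size $|e_1|$. Then an isomorphism may send $a_1$ there, and send $a_0$ to its committed image opened shortly after stage $s_0$. Both images lie below $s_1$. In general, the number of low-index classes that can absorb $R_x$'s witnesses depends on $s_0,\dots,s_{x-1}$, not on $x$.

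This is exactly the idea the paper supplies. $R_x$ gets $n_x+1$ potential witnesses, where $n_x$ is the eventual number of classes of $B$ reserved for higher-priority requirements (tagged $y$ or $y^\dagger$ with $y<x$). Since this block size is only $\Delta^0_2$, its location must be recovered from $h$ itself, via the primitive recursive iteration $\Psi^h(x)=\max\{h(a_i)\mid i\le\Psi^h(x-1)\}+\Psi^h(x-1)+1$. This works because $p^{-1}$ is built to be minimal, so some witness of $R_{x-1}$ is mapped to an index bounding every class tagged $(x-1)^\dagger$.

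Your closing reformulation, ``the $x$-th class of a given size occurs only at index $\ge s_x$'', does not rescue the argument. $A$ cannot primitive recursively locate $x+1$ classes of a common size, since $E$ may contain exactly one class of each finite size.

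\textbf{Secondary problems.}
\begin{enumerate}
\item Growing a killed class once to a size above everything seen so far is not enough, because the witnesses themselves keep growing. The killed classes must be kept strictly above the \emph{current} witness sizes and rematched when those grow. This is why the paper's map $p:B\to A$ is only $\Delta^0_2$, and why one must verify that it stabilises and is onto.
\item You assume $K\ge 1$, but the lemma includes the case with no infinite classes. There, and in general after non-uniformly setting the infinite classes aside as the paper does, punctuality of $A$ should come from opening a fresh class at every stage.
\item ``Merging'' classes is not available: once two elements are declared inequivalent, that is permanent.
\item Protection from $R_x$'s kills should go to classes serving $R_y$ for $y<x$, not to classes committed to $e_i$ with $i\ge x$; as written, the priority direction is reversed.
\end{enumerate}
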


\begin{proof}
Let $E$ be a computable equivalence structure with finitely many infinite classes and unbounded character. By non-uniformly fixing the number of infinite classes in $E$ and the indices of some representatives, we may assume that $E$ only has classes of finite size. As usual, we construct punctual equivalence structures $A,B\cong E$ such that a given total $\Delta_{2}^{0}$ function may be computed primitive recursively from any isomorphism $h:A\to B$. Let $g^{*}$ be a total $\Delta_{2}^{0}$ function and let $g(x,s)$ be a primitive recursive approximation to $g^{*}$. For each $x$, the \emph{stabilising} stage is the stage $s_{x}$ where $g(x,s)=g(x,s_{x})$ for all $s\geq s_{x}$. The goal is to encode for each $x$, the stabilising stage $s_{x}$ into the structures $A,B$.

\

\emph{Encoding $s_{0}$:} We denote the equivalence classes of $A,B$ in the order they are enumerated as $a_{i}$ and $b_{i}$ respectively. Similarly denote the equivalence classes of the computable copy $E$ as $e_{0},e_{1},\dots$. During the construction, $A$ will be copying $E$; at each stage $s\geq i$, $a_{i}$ will have size either $1$ or the current size of $e_{i}$, whichever is larger. Observe that even if $E$ does not grow any of its existing classes nor enumerates new classes, $A$ remains punctual.

Given any isomorphism $h:A\to B$, we want $h(a_{0})$ to have an index at least as large as $s_{0}$. We call $a_{0}$ the \emph{witness} for $R_{0}$ (we shall introduce the requirements shortly). To ensure that this property holds, whenever $g(0,s)\neq g(0,s-1)$, we \emph{tag} each class $b_{i}$ for all $i<s$ with $0$. This means that each of these classes should be kept at a size strictly larger than $a_{0}$. The idea here is that since $E$ has unbounded character, we will eventually be able to find such classes of larger size in $E$ and keep $B\cong E$. To this end, we define a $\Delta_{2}^{0}$ map $p:B\to A$ to aid us in constructing $B$. For each class $b_{i}$ tagged with $0$, define $p(b_{i})=a_{j}$ where $j$ is the least for which $a_{j}$ is not yet in the range of $p$ and also has size currently larger than that of $a_{0}$ and at least as large as that of $b_{i}$. For each subsequent stage, we keep the size of $b_{i}$ the same as the size of $p(b_{i})$. Similarly, define $p(b_{s})=a_{0}$ and keep $b_{s}$ the same size as $a_{0}$. Evidently, $b_{s}$ will be the first class in $B$ to have the same size as $a_{0}$; any isomorphism $h:A\to B$ must map $a_{0}$ to a class with index at least as large as $b_{s}$. For bookkeeping purposes, we will also tag such classes with a special character $0^{\dagger}$, or more generally, $x^{\dagger}$.

Since $g^{*}$ is assumed to be total, the number of classes tagged with $0$ must be finite, as $s_{0}$ is exactly the number of classes in $B$ tagged with $0$. For each such class $b_{i}$ tagged with $0$, since we want to keep their sizes strictly larger than that of $a_{0}$, whenever the size of $a_{0}$ increases, we might need to redefine $p(b_{i})$. However, since $E$ is assumed to have only finite classes, the size of $e_{0}$ must eventually stop increasing. That is, the size of $a_{0}$ also stabilises at some finite stage. After such a stage, no further changes will be made to $p(b_{i})$ for each $b_{i}$ tagged with $0$.

\

\emph{Encoding $s_{x}$:} As the construction is considerably more complex here, we employ a simple priority argument to meet the following requirements.
\begin{align*}
    P:&\,B\cong E\\
    R_{x}:&\,\text{There exists }a\in A,\text{ such that the index of }h(a)\text{ is at least as big as }s_{x},\\
    &\,\text{for any isomorphism }h:A\to B.
\end{align*}
In addition, we call the $a\in A$ which meets $R_{x}$ the \emph{witness} for $R_{x}$. Suppose that for each $y<x$, we have found witnesses $a_{i_{y}}$ for $R_{y}$. Recall that the strategy was to keep the indices of the possible images for the various $a_{i_{y}}$ at least as large as $s_{y}$, and that this is accomplished by growing the classes tagged with $y$ to be bigger than the size of $a_{i_{y}}$. This means that we must take some care in the choice of witness for $R_{x}$. If we choose a class $a_{i}$ that ends up being of the same size as one of the classes tagged with $y$ for some $y<x$, then computing $h(a_{i})$ for some isomorphism $h:A\to B$ might provide no information regarding $s_{x}$, as such classes could have relatively small indices compared to $s_{x}$.

As such, rather than finding the precise witness for $R_{x}$, we instead give a suitable range of potential witnesses and apply a pigeonhole argument to show that at least one of them is an actual witness. To be more precise, the desired property is for the image of the witness to not map to any of the classes with `small' indices; those which have been tagged either with $y$ or $y^{\dagger}$ for some $y<x$. It shall become evident that the number of such classes, say $n_{x}$, can be computed primitive recursively from $s_{y}$ for each $y<x$ (for now, we at least have $n_{1}=s_{0}+1$). By computing a given isomorphism $h:A\to B$ on the first $n_{x}$ many classes that are not potential witnesses for $R_{y}$ where $y<x$, at least one of them must map to a class in $B$ not tagged with $y$ or $y^{\dagger}$ for any $y<x$. We shall call these $n_{x}$ many classes the potential witnesses for $R_{x}$. Evidently, it follows that every class in $A$ is a potential witness for exactly one requirement $R_{x}$.

Whenever $g(x,s)\neq g(x,s-1)$, for each class $b_{i}$ where $i<s$ and $b_{i}$ is not tagged with $y$ or $y^{\dagger}$ for any $y<x$, tag $b_{i}$ with $x$. As per convention, we may `undo' any action performed for a lower priority $R_{z}$ by removing all tags $z$ or $z^{\dagger}$ for any $z>x$, and letting $p(b)\uparrow$ for any untagged $b$. Recall that the idea is for classes tagged with $x$ to avoid being the size of the witness for $R_{x}$. Since we now have multiple potential witnesses, each of these classes tagged with $x$ needs to have sizes larger than all sizes of potential witnesses for $R_{x}$. For each class $b_{i}$ tagged with $x$, define $p$ of these classes to be $a_{j}$ for the least $j$ satisfying all of the following.
\begin{itemize}
    \item $a_{j}$ has size larger than all the potential witnesses for $R_{x}$.
    \item $a_{j}$ has size at least as large as the current size of $b_{i}$.
    \item $a_{j}$ is currently not in the range of $p$.
\end{itemize}
For classes $b_{i}$ where $i\geq s$, let $p(b_{i})=a_{j}$, where $j$ is the least such that $a_{j}$ is not currently in the range of $p$, and $j$ is an index for one of the current potential witnesses of $R_{x}$. By pigeonhole principle, there must be at least one such $i\geq s$ for which such a $a_{j}$ can be found. Tag each $b_{i}$ for which $a_{j}$ can be found with $x^{\dagger}$. As before, let the size of $b$ be the same as $p(b)$ for each subsequent stage.

Once again, since $g^{*}$ is assumed to be total, $s_{x}$ must exist, and after such a stage, no new classes will ever be tagged with $x$. For each class $b_{i}$ tagged with $x$, whenever the sizes of one of the potential witnesses for $R_{x}$ increases, $p(b_{i})$ might need to be redefined. However, as the number of potential witnesses is fixed (depending only on $R_{y}$ for each $y<x$), there is some finite stage after which none of these potential witnesses ever change again. This implies that each of $p(b_{i})$ also eventually stabilises.

\

\emph{Ensuring $A,B\cong E$:} From the description above, it is evident that $A$ is isomorphic to $E$; $a_{i}$ copies $e_{i}$ once $e_{i}$ is enumerated. We shall verify formally that $p:B\to A$ is an isomorphism later. For now, we give a brief description of why this might be true. Recall that each $a_{i}$ is a potential witness for some $R_{x}$. Obviously, none of these classes will be the $p$ image for any class in $B$ tagged with $x$. Whilst some of these classes might be the $p$ image of classes in $B$ tagged with $y$ for some $y<x$, at least one of these potential witnesses must have a preimage which is not tagged with $y$. For such classes $a$, notice that we will always pick the potential preimage from $B$ as the one having the least index. Once such a class is found, $p(b)$ would have been defined to be $a$ and tagged with $x^{\dagger}$. Assuming that all the tags have stabilised, $a$ will forever remain in the range of $p$.

\

\emph{Construction:} At stage $s$, enumerate new classes $a_{s}$ into $A$ and $b_{s}$ into $B$, temporarily leaving them at size $1$.
\begin{description}
    \item[Step 1] Pick the least $x<s$ such that $g(x,s)\neq g(x,s-1)$. If no such $x$ exists, then proceed to the next step. Otherwise, for each $i<s$, such that $b_{i}$ is not tagged with $y$ or $y^{\dagger}$ for any $y<x$, tag $b_{i}$ with $x$.

    \item[Step 2] Define $a_{0}$ to be the (potential) witness for $R_{0}$. Suppose recursively that $a_{j}$ for each $j<i$ have all been defined to be potential witnesses for some $R_{y}$. Let $x$ be the least such that the potential witnesses for $R_{x}$ has yet to be defined. For each $k<n_{x}+1$ where $n_{x}$ is the number of classes in $B$ currently tagged with $y$ or $y^{\dagger}$ for some $y<x$, define $a_{i+k}$ to be a potential witness for $R_{x}$. Note that we renew the definition of the potential witnesses at each stage $s$.

    \item[Step 3] Proceeding in order of $i\leq s$, define $p_{s}(b_{i})$ to be $a_{j}$ for the least $j$ such that all of the following holds.
    \begin{itemize}
        \item $a_{j}$ is not yet in the range of $p_{s}$ and has size at least as big as the current size of $b_{i}$.

        \item If $b_{i}$ is tagged with $y$, then $a_{j}$ must have size larger than any potential witness for $R_{y}$.

        \item If $b_{i}$ is tagged with $y^{\dagger}$, then let $p_{s}(b_{i})=p_{s-1}(b_{i})$.
    \end{itemize}
    In addition, if $p_{s}(b_{i})$ is defined to be a potential witness for $R_{x}$ and is not tagged with $y$ for any $y<x$, then we tag $b_{i}$ with $x^{\dagger}$. If there is some $b_{i}$ for which $p_{s}(b_{i})$ cannot be defined, then we leave $p_{s}(b_{j})$ undefined for each $j\geq i$ and proceed to the next step.

    \item[Step 4] For each class $a_{i}$, let the size of $a_{i}$ be the same as the size of $e_{i}$. For each class $b_{i}$, let the size of $b_{i}$ be the same as the size of $p_{s}(b_{i})$ if defined. Otherwise, we do not change the size of $b_{i}$ at this stage.
\end{description}
Once Step 4 is completed, proceed to stage $s+1$.

\

\emph{Verification:} First, we show that the tags of each $b_{i}$ eventually stabilises. It is evident that every class $b_{i}$ must become tagged. Furthermore, once it is tagged, it never becomes untagged. A careful analysis of the construction will allow one to conclude that for a fixed class $b_{i}$, its tag could possibly go through the following changes.
\begin{itemize}
    \item If $b_{i}$ is currently tagged with $x^{\dagger}$, then its tag might later change to either $x$, $y$ or $y^{\dagger}$ for some $y<x$.

    \item If $b_{i}$ is currently tagged with $x$, then its tag might later change to either $y$ or $y^{\dagger}$ for some $y<x$.
\end{itemize}
It follows immediately that the tag of each $b_{i}$ eventually stabilises.

Since the tag of each $b_{i}$ eventually stabilises, the number of potential witnesses in $A$ for each $R_{x}$ also stabilises (see Step 2 of the construction for the definition). Recall that $a_{i}$ has the same (finite) size as $e_{i}$ for each $i$, there must be some finite stage after which the sizes of all potential witnesses of $R_{x}$ never again changes. By assumption that $E$ has unbounded character, $E$ must eventually produce classes with strictly larger size than all of these potential witnesses for $R_{x}$. Once such classes are discovered, Step 3 of the construction will define them to be the $p_{s}$ image of the classes $b_{i}$ tagged with $x$. Furthermore, as the sizes of the potential witnesses and the tags of $b_{i}$ no longer change, $p_{s}(b_{i})$ will also remain constant after this stage. For classes $b_{i}$ tagged with $x^{\dagger}$, we also have that provided its tag does not change, $p_{s}(b_{i})$ is always defined to be $p_{s-1}(b_{i})$.

We are now ready to show that $p=\lim_{s}p_{s}$ is an isomorphism. Since each $b_{i}$ copies $p_{s}(b_{i})$ once it is defined, then we will always have that $p(b_{i})$ and $b_{i}$ must be of the same size. In addition, $p_{s}(b_{i})$ is never defined to be $p_{s}(b_{j})$ for any $j<i$, and thus, $p$ is injective. It remains to show that $p$ is surjective.

Let $a_{j}\in A$ be given. Suppose also that $a_{j}$ is a potential witness for $R_{x}$, and that $a_{j}\neq p(b_{i})$ for any $b_{i}$ tagged with some $y<x$. We may further assume that we are at some stage where all of the following holds.
\begin{itemize}
    \item $p_{s}(b_{i})$ has stabilised for all $b_{i}$ tagged with either some $y\leq x$ or $y^{\dagger}$ for some $y<x$.
    \item $a_{j}$ is currently the class with the least index that is not yet in the range of $p_{s}$.
\end{itemize}
At such a stage, let $b_{i}$ be the class in $B$ with the least index such that $p_{s}(b_{i})\uparrow$ and $b_{i}$ has size at most the current size of $a_{j}$. If there is no $b_{k}$ where $k<i$ for which $p_{s}(b_{k})\uparrow$, then we must have defined $p_{s}(b_{i})=a_{j}$. Once $p_{s}(b_{i})$ is defined, it never again changes. This is because $b_{i}$ either has tag $y\leq x$ or $y^{\dagger}$ for some $y<x$, which is assumed to have stabilised, or $b_{i}$ will become tagged with $x^{\dagger}$ which cannot be replaced by any other tag at such a stage. On the other hand, if there is some $b_{k}$ such that $k<i$ and $p_{s}(b_{k})\uparrow$, then by the construction, the size of $b_{i}$ never increases until $p_{s}(b_{k})\downarrow$. In other words, $b_{i}$ will always remain at a size less than the current size of $a_{j}$. Once $p_{s}(b_{k})$ have stabilised for each $k<i$, $p_{s}(b_{i})$ must then be defined as $a_{j}$ and never again change.

Finally, to recover $g^{*}(x)$ for each $x$, define
$$
\Psi^{h}(x)=\begin{cases}
    0,&\text{if }x=0,\\
    \max\{h(a_{i})\mid i\leq\Psi^{h}(x-1)\}+\Psi^{h}(x-1)+1,&\text{otherwise.}
\end{cases}
$$
It is evident that $\Psi$ is a primitive recursive scheme. Given $x$, compute $g(x,\max\{h(a_{i})\mid i\leq\Psi^{h}(x)\})$. We claim that this must be equal to $g^{*}(x)$. The idea is that $\Psi^{h}(x)$ is an upper bound for the indices of potential witnesses for $R_{y}$ where $y\leq x$, and that at least one of the potential witnesses must be mapped to an index large enough to recover both the stabilising stage $s_{x}$ and the number of potential witnesses for $R_{x+1}$. Formally, we prove that the following statements hold.
\begin{enumerate}[label=(\roman*)]
    \item There is at least one class with tag $x^{\dagger}$, and at least one of $h(a)$, where $a$ is a potential witness for $R_{x}$, has an index which bounds the indices of any class with tag $x^{\dagger}$.

    \item If $a_{i}$ is a potential witness for $R_{y}$ for some $y\leq x$, then $i\leq\Psi^{h}(x)$.
\end{enumerate}
For any potential witness $a$ for $R_{x}$, $h(a)$ obviously cannot be tagged with $x$ as all such classes have size larger than the size of $a$. It is perhaps tedious, but not difficult to show that the number of classes which are tagged with $x^{\dagger}$, say $n$, are exactly the number of potential witnesses for $R_{x}$, such that given any isomorphism from $A\to B$, at least $n$ many potential witnesses for $R_{x}$ is not mapped to some class tagged with $y$ or $y^{\dagger}$ for some $y<x$. Applying this with the fact that there are more potential witnesses for $R_{x}$ than there are classes tagged with $y$ or $y^{\dagger}$ for some $y<x$ allows us to conclude that $n\geq 1$. That is, there is at least one class in $B$ tagged with $x^{\dagger}$. From the construction, we also have that $p^{-1}$ is a `minimal' isomorphism in the sense that for each $a_{i}$, $p^{-1}(a_{i})$ has the least possible index $k$ such that $p^{-1}(a_{j})\neq b_{k}$ for any $j<i$ and $b_{k}$ has the same size as $a_{i}$. Furthermore, if such a class $b_{k}$ is not tagged with $y$ or $y^{\dagger}$ for any $y<x$, it must be tagged with $x^{\dagger}$. In other words, in order for $h:A\to B$ to be an isomorphism, at least one of $h(a)$ must have index which bounds all indices of classes with tag $x^{\dagger}$.

Finally, we proceed via induction to show that (ii) also holds. The base case is trivial; the only (potential) witness for $R_{0}$ is $a_{0}$. Inductively suppose that (ii) holds for all $y<x$. Applying (i) allows us to obtain that $\max\{h(a_{i})\mid i\leq\Psi^{h}(x-1)\}$ is an index $k$ large enough such that any class in $B$ tagged with $y^{\dagger}$ for some $y<x$ has index $\leq k$. In addition, since any class with tag $y$ has index less than any class with tag $y^{\dagger}$, it must be that there are at most $k+1$ many potential witnesses for $R_{x}$. By the inductive hypothesis for $x-1$, we also have that if $a_{i}$ is a potential witness for $R_{y}$ for some $y\leq x-1$, then $i\leq\Psi^{h}(x-1)$. Therefore, $\Psi^{h}(x)=k+1+\Psi^{h}(x-1)$ satisfies (ii). Lemma~\ref{lem:d2fininf} and Theorem~\ref{thm:d2equiv} are proved.
\end{proof}

\section{Linear Orders}\label{sec:lin}

Another well-studied class in computable structure theory is that of linear orders. In this section, we study computably categorical linear orders, relatively $\Delta_{2}^{0}$-categorical linear orders, and some $\Delta_{3}^{0}$-categorical linear orders. Before we discuss the results and proofs, we introduce some notation that shall be used.
\begin{notation}
For linear orders $L_{0},L_{1}$:
\begin{itemize}
    \item The linear order $L_{0}+L_{1}$ is so that $a<b$ iff
    \begin{itemize}
        \item $a,b\in L_{0}$ or $a,b\in L_{1}$, and $a<b$ in $L_{0}$ or $L_{1}$ respectively;
        \item or $a\in L_{0}$ and $b\in L_{1}$.
    \end{itemize}
    \item The linear order $L_{0}*L_{1}$ is such that each point $a\in L_{1}$ is replaced with a copy of $L_{0}$.
    \item $L_{0}^{*}$ is given by the reversed ordering of $L_{0}$.
\end{itemize}
Finally, for a family $F$ of linear orders, $\shuffle(F)$ is obtained by replacing each point in $\eta$ with a copy of a member of $F$ so that between any two members of $F$ in $\shuffle(F)$, there exists a copy of each member of $F$. 
\end{notation}

\begin{theorem}[Theorem~2 in~\cite{bk21}]\label{thm:d1lin}
    If $L$ is a relatively $\Delta_{1}^{0}$-categorical linear order and not punctually categorical, then $\prcat(L)=\cone(\Delta_{1}^{0})$.
\end{theorem}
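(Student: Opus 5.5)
The inclusion $\cone(\Delta_{1}^{0})\subseteq\prcat(L)$ comes directly from Fact~\ref{fact:conesubspec}, since relatively $\Delta_{1}^{0}$-categorical implies computably categorical. The work is the reverse inclusion. I will use the classical characterisation (Remmel, Dzgoev--Goncharov): a computable linear order is (relatively) computably categorical iff it has only finitely many successor pairs. By Theorem~\ref{thm:punccat}, non-punctual-categoricity means $L$ is infinite. So $L$ is a finite sum $F_0+\eta+F_1+\eta+\cdots$ of finite blocks and copies of $\eta$, with at least one copy of $\eta$. Fix a total computable $g$ with halting stages $t(x)$. As in Theorem~\ref{theo:Delta_1-cat-eq-str}, I will build punctual $A,B\cong L$ such that from any isomorphism $h\colon A\to B$ together with $h^{-1}$ one primitive recursively obtains a bound exceeding $t(x)$. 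Then $g(x)$ is recovered by running $g(x)$ for that many steps.

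The construction fixes, non-uniformly, the finite blocks and one distinguished $\eta$-interval $I$, say between two fixed points $u<v$ (or a fixed endpoint and $\pm\infty$). Both $A$ and $B$ lay down the finite blocks at stage $0$. In every $\eta$-interval other than the part of $I$ we manipulate, each copy adds one new point per stage, placed densely in a standard way, so both remain punctual. Inside $I$, the copy $A$ is built quickly: at stage $s$ it adds a new point into every gap among the current points of $I$, together with points at the ends. The copy $B$ must also add a point every stage to stay punctual, but it places all of these in a single designated ``waiting'' gap, for instance always immediately to the right of the leftmost current point of $I$. It fills the other gaps only when some new value $g(y)$ converges. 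Concretely, once $g(y)\downarrow$ for all $y<x$, $B$ densifies the first $x$ gaps of $I$ in some fixed canonical enumeration. In the limit every gap receives infinitely many points, so $I^{B}\cong\eta$ and $B\cong L$.

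Decoding will mirror the equivalence-structure argument. Consider the first $x+1$ elements of $I^{A}$ to appear, forming up to $x+2$ gaps. Compute $h$ on them, then use $h^{-1}$ to locate images in $B$ lying strictly between consecutive $h$-images. This forces $B$ to have placed points in $x+1$ distinct gaps of a specific shape, and I will arrange the canonical gap enumeration so that this can only happen after stage $t(x)$. The maximum index seen is then primitive recursive in $h\oplus h^{-1}$ and bounds $t(x)$.

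The main obstacle is this step. In contrast to equivalence structures, the points of $B$ cannot be distinguished by an invariant like class size. An isomorphism may also map the early points of $A$ into the waiting gap of $B$, which is densely filled early, so the images might reveal nothing. My fix is to make the waiting location itself move. At each convergence the waiting gap is shifted to a fresh gap further right, and the gaps are arranged so that any $x+2$ points of $B$ forming a pattern like $A$'s $x+1$ early points must include a point enumerated after $t(x)$. If this pigeonhole fails in general, I would fall back on splitting into cases: a nonempty finite block adjacent to $\eta$ versus $L=\eta$ (possibly with endpoints). In the adjacent-block case one can use the successor-free boundary near a fixed point of the finite block, whose left and right neighbourhoods $B$ reveals slowly, and query $h^{-1}$ on points approaching that point in $B$, essentially the argument of \cite{bk21}.
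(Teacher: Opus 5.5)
\textbf{There is a genuine gap in the decoding step.} Your reduction to a single $\eta$-summand, your fast/slow copy plan, and the inclusion $\cone(\Delta^0_1)\subseteq\prcat(L)$ from the Fact are all fine. The problem is the direction of your queries. Starting from early points of the fast copy $A$ and pushing them forward with $h$ cannot work in $\eta$.

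Inside $I$, a configuration of $x+1$ points carries no information beyond its size: it is just an $(x+1)$-element chain. Your $B$ adds a point to $I$ at every stage, so long before $t(x)$ there are $x+1$ early points of $I^B$ forming such a chain. By back-and-forth, any order-preserving assignment of your early $A$-points to them extends to an isomorphism $h$. The same holds for any primitive recursively bounded number of further forward queries. An adversarial isomorphism can send the early $A$-points to widely spaced early waiting-gap points and answer every later query by an early point lying in between. Hence your claim that any points of $B$ forming a pattern like $A$'s early points must include one enumerated after $t(x)$ is false. Moving the waiting gap does not help, since each waiting gap still accumulates many early points.

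Your use of $h^{-1}$ is also idle. Points of $B$ between $h(a_i)$ and $h(a_{i+1})$ are obtained by applying $h$ to points of $A$ between $a_i$ and $a_{i+1}$. The finite-block fallback is vague and in any case does not cover $L=\eta$.

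\textbf{The missing idea is to reverse the direction.} Start from known early points of the slow copy $B$ that bound an interval $B$ keeps empty until the relevant computation halts. Pull them back with $h^{-1}$, use the standardness of $A$ to find a point between the preimages, and push that point forward with $h$.

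The paper does exactly this. It lets $B$ be a standard $\eta$ followed by a primitive recursive chain $b_0<b_1<\cdots$, with one new $b_s$ per stage so $B$ stays punctual. The interval $(b_x,b_{x+1})$ is densified only after $g(x)\downarrow$, so $B\cong\eta+1+\eta+1+\cdots\cong\eta$. Given $h$, compute $h^{-1}(b_x)$ and $h^{-1}(b_{x+1})$, and primitive recursively pick $a$ strictly between them in $A$. Then $h(a)\in(b_x,b_{x+1})$ must have index beyond the convergence stage of $g(x)$, whatever isomorphism $h$ is.

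Your own $B$ already contains such intervals: the gaps between consecutive waiting points placed just right of the leftmost point. So your construction would work with this decoding, once you tie the densification of a primitive recursively identifiable such gap to $g(x)\downarrow$. Working inside one $\eta$-summand this way handles every $L$ uniformly, so no case split on finite blocks is needed.
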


\begin{proof}
For a linear order $L$ to be computably categorical and not punctually categorical, $L$ must be the finite sum of $\eta$ and finite linear orders \cite{gd80,remmel81}. Let $g$, a total computable function be given. We shall utilise only one copy of $\eta$ in our strategy to encode $g$. The rest of the linear order $L$ can be enumerated in a standard way. We thus assume that $L$ is simply just $\eta$ for the rest of the proof.

$A$ will be the standard presentation of $\eta$; there is a primitive recursive function which computes $a_{k}$ between $a_{i},a_{j}$ for any given $i,j$. For the enumeration of $B$, we do the following at stage $s$.
\begin{enumerate}
    \item Enumerate $b_{s}$ into $B$. In addition, if $s>0$, then define $b_{s-1}<b_{s}$. That is, we have a uniformly primitive recursive sequence $b_{0}<b_{1}<b_{2}<\dots$ which shall be used to encode $g$. Also enumerate a standard copy of $\eta$ to the left of $b_{0}$.

    \item If $x\leq s$ is such that $g(x)[s]\downarrow$, then we `densify' the interval $(b_{x},b_{x+1})$. That is, after stage $s$, we construct a standard copy of $\eta$ between $b_{x}$ and $b_{x+1}$.
\end{enumerate}
Since $g$ is assumed to be total, for each $x$, there must be some stage $s$ such that for all $s'\geq s,\,g(x)[s']\downarrow$. Therefore, for each $x$, the interval between $b_{x}$ and $b_{x+1}$ is isomorphic to $\eta$. $B$ will thus have order type $\eta+1+\eta+1+\eta+\dots\cong \eta$. In addition, observe that all indices of elements between $b_{x}$ and $b_{x+1}$ is larger than the stage $s$ for which $g(x)[s]\downarrow$.

Let $h:A\to B$ be an isomorphism. Given $x$, compute $h^{-1}(b_{x})$ and $h^{-1}(b_{x+1})$. Since $A$ is the standard copy of $\eta$, we may produce primitive recursively $a_{k}$ such that $h^{-1}(b_{x})<a_{k}<h^{-1}(b_{x+1})$. Finally, by computing $h(a_{k})$, we obtain that $b_{x}<h(a_{k})<b_{x+1}$, and therefore, $h(a_{k})$ possesses index at least as large as the stage where $g(x)$ first converges, allowing us to recover $g(x)$.
\end{proof}

\

\begin{theorem}\label{thm:d2lin}
    If $L$ is a relatively $\Delta_{2}^{0}$-categorical linear order which is not relatively $\Delta^0_1$-categorical, then $\prcat(L)=\cone(\Delta_{2}^{0})$.
\end{theorem}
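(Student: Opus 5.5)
We need to show both inclusions. The inclusion $\cone(\Delta_{2}^{0})\subseteq\prcat(L)$ is immediate from Fact~\ref{fact:conesubspec}, since relative $\Delta_{2}^{0}$-categoricity implies $\Delta_{2}^{0}$-categoricity. For the reverse inclusion, we first invoke the known characterisation of relatively $\Delta_{2}^{0}$-categorical linear orders (McCoy): $L$ is a finite sum of finite orders, $\eta$, $\omega$, $\omega^{*}$, $\zeta$, $\mathbf{n}*\eta$ and possibly $\shuffle$-type pieces built from finitely many finite orders (in particular $\shuffle(\{1,\mathbf{n}\})$-like summands). Since $L$ is not relatively $\Delta^0_1$-categorical, at least one summand is not of the form $\eta$ or finite; so $L$ contains a summand $S$ among $\omega$, $\omega^{*}$, $\zeta$, $\mathbf{n}*\eta$ with $n\ge 2$, or a shuffle of finite orders with at least two distinct sizes. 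As in the proof of Theorem~\ref{thm:d1lin}, we will enumerate all other summands in a standard punctual way in both copies $A,B$, non-uniformly fixing the decomposition, and encode a given total $\Delta_{2}^{0}$ function $g^{*}$ (with primitive recursive approximation $g(x,s)$ and stabilising stages $s_x$) into the single summand $S$. We then handle each type of $S$ separately, exactly parallel to Lemmas~\ref{lem:d2boundchar} and \ref{lem:d2fininf}.

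For $S=\omega$ (and symmetrically $\omega^{*}$, and $\zeta$ by working in its $\omega$-half), $A$ is the standard copy $a_0<a_1<\cdots$, with successor primitive recursive. In $B$ we maintain a sequence of markers; at each stage we add a new element at the right end, and whenever $g(x,s)\neq g(x,s-1)$ we insert fresh elements (of index $\ge s$) immediately below the current $(x{+}1)$-st element of $B$. Since each $x$ changes finitely often, every element has only finitely many predecessors, so $B\cong\omega$, and the $(x{+}1)$-st element of $B$ (i.e.\ $h(a_x)$) has index $\ge s_x$ or was fixed after stage $s_x$; more carefully, we arrange that the $x$-th element of $B$ in the limit is enumerated at a stage $\ge s_x$, by inserting at position $x$ on each change. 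Then $g^{*}(x)=g(x,\max_{y\le x} h(a_y))$, a primitive recursive computation from $h$. For $S=\mathbf{n}*\eta$ with $n\ge2$, we mimic the proof of Theorem~\ref{thm:d1lin}: in $B$ we build blocks that are temporarily too short (size $<n$) or chains of points, and upon a change of $g(x,\cdot)$ we destroy the current candidate block for position $x$ by densifying/padding, so that any genuine $n$-block lying in a prescribed interval of $B$ (located via $h^{-1}$ of markers and a standard block in $A$) has an element of index $\ge s_x$. The $\Delta_2^0$ nature is handled by a finite-injury argument: each block region for $x$ is reset only finitely often, and since $\eta$ absorbs inserted finite pieces appropriately (we pad to full $n$-blocks with $\eta$ around), $B\cong\mathbf{n}*\eta$.

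For the shuffle case, say $\shuffle(F)$ with $F$ finite containing two sizes $k<m$, we combine the above with the size-changing trick from Lemma~\ref{lem:d2boundchar}: in $B$, blocks currently of size $k$ inside a marked interval for $x$ are grown to size $m$ (or merged/extended) when $g(x,\cdot)$ changes, while shuffle density is restored by inserting fresh standard material; $A$ is standard, so from $h$ we locate a $k$-block in $A$ between $h^{-1}$ of the markers and read off an index $\ge s_x$. The main obstacle I expect is precisely the $\mathbf{n}*\eta$ and shuffle cases: ensuring that the finitely many resets per interval leave $B$ isomorphic to the intended order (no stray finite blocks of wrong size survive in the limit, and density plus the shuffle condition hold) while all search remains primitive recursive. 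I would first try to make every ``destroyed'' block absorbable by always extending it to a full $m$-block (or $n$-block) and surrounding it with fresh standard copies of the order, so that the limit on each interval is visibly a standard copy.
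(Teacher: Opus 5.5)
Your overall plan is the paper's: a standard $A$, a delayed $B$ that codes the stabilising stages $s_x$ into a single summand, and all other summands enumerated in the standard way. However, the coding mechanism you give for $\omega$ does not work. Inserting at a \emph{position}, rather than next to a fixed element, does not ensure that every element has finitely many predecessors. Each position does stabilise, but a given element can be pushed right infinitely often. For instance, suppose $g(y,\cdot)$ changes exactly once, at stage $y+1$. At that stage the current $(y+1)$-st element is the very first element $e_0$, preceded by $f_0,\dots,f_{y-1}$, so the new $f_y$ lands between $f_{y-1}$ and $e_0$. In the limit $f_0<f_1<\cdots<e_0<e_1<\cdots$, and $B\cong\omega\cdot 2$.

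Your recovery from $h(a_0),\dots,h(a_x)$ has a second problem. It presupposes that $h$ sends the initial segment of the $\omega$-summand of $A$ onto that of $B$, which fails for $\zeta$, where isomorphisms may shift.

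The paper avoids both issues as follows. Fix markers $b_0<b_1<\cdots$ and, whenever $g(x,\cdot)$ changes, insert a fresh element \emph{directly after} $b_x$. Then $(b_x,b_{x+1})$ receives elements only on account of $x$, so it is finite and $B\cong\omega$, and the final successor of $b_x$ has index $\ge s_x$. To recover $g^*(x)$, compute $h^{-1}(b_x)$, take its successor $a$ in the standard copy $A$ (primitive recursively), and output $g(x,h(a))$. This uses $h^{-1}$, which is allowed because degrees in $\prcat$ must compute both $h$ and $h^{-1}$. It is also insensitive to shifts, so it covers $\omega^*$ and $\zeta$ as well.

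For $n*\eta$, your description does not pin down a working mechanism. An interval of $B$ between two markers in different blocks contains infinitely many $n$-blocks, and those present before stage $s_x$ have small indices. Since $h$ may send your chosen standard block of $A$ to any of them, ``destroying the current candidate block'' gives no lower bound on indices. You would have to break every block in the interval at every change (elements can only be added, never removed), and you do not describe this.

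The missing idea is again to attach the coding to a fixed element. Keep $b_x$ as the left end of its $n$-block. Whenever $g(x,\cdot)$ changes, split that block into two: $b_x$ followed by $n-1$ fresh elements, and a second block consisting of the old $n-1$ elements plus one fresh element. Keep densifying as usual. Then the final successor of $b_x$ has index $\ge s_x$ and is recovered exactly as in the $\omega$ case.

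Finally, you have misquoted McCoy's characterisation. It involves only finite sums of $m$, $\omega$, $\omega^*$, $\zeta$ and $n*\eta$ for $n\ge 1$. Shuffles of two or more finite orders are not relatively $\Delta^0_2$-categorical; Theorem~\ref{thm:shuffle} places their spectrum at $\cone(\Delta^0_3)$. So your shuffle case is vacuous rather than needed.
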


\begin{proof}
Let $g^{*}$ be a total $\Delta_{2}^{0}$ function with a primitive recursive approximation $g(x,s)$. Applying the classification presented in \cite[Theorem~2.7]{mccoy03}, $L$ must contain at least one of $\omega,\omega^{*},\zeta$ or $n*\eta$ for some $n>1$ as a sub-interval.

First suppose that $L$ contains $n*\eta$ as a sub-interval for some $n>1$. To construct $L$, we enumerate all of $L$ except the sub-interval containing exactly $n*\eta$ in a standard way. We focus only on constructing punctual presentations of this sub-interval, $n*\eta$. Each $n$ successive elements shall be referred to as a $n$-block. $A$ will be the `standard' copy; there is a primitive recursive procedure to compute the successor of $a_{i}$ provided it is not the right-most element of an $n$-block. The $n$-blocks in $A$ are also produced in a standard fashion. At each stage, between two currently successive $n$-blocks, enumerate a new one between them. Also enumerate new $n$-blocks both to the left and to the right of the current left-most and right-most $n$-blocks in $A$ respectively. It is easy to see that $A\cong n*\eta$. As usual, $B$ will be a `delayed' copy to encode the stabilising stage $s_{x}$ for each $x$.

Fix `standard' elements $b_{x}$ for each $x\in\omega$. The intention is to use the successor of $b_{x}$ to encode $s_{x}$. At each stage $s$ such that $g(x,s)\neq g(x,s-1)$, enumerate $n-1$ many fresh elements and define them to temporarily be in the same $n$-block as $b_{x}$. More specifically, we \emph{split} the current $n$-block containing $b_{x}$ into two successive $n$-blocks. The first contains $b_{x}$ as the left-most element and $n-1$ many fresh elements, while the second contains the old $n-1$ elements in the same $n$-block as $b_{x}$ with a fresh element to turn it into a $n$-block. To ensure that $B\cong n*\eta$, we `densify' in the standard way. Since $g^{*}$ is assumed to be total, there must be a final stage at which the $n$-block containing $b_{x}$ is split. Furthermore, it is evident that the true successor of $b_{x}$ has index $>s_{x}$. Given any isomorphism $h:A\to B$, compute $g(x,h(a))$ where $a$ is the successor of $h^{-1}(b_{x})$ (recall that $a$ can be found primitive recursively).

The construction can be arranged as follows. At stage $s$, in both $A$ and $B$, enumerate a new $n$-block between any two currently adjacent $n$-blocks. Also enumerate new left-most and right-most $n$-blocks into both $A$ and $B$. In addition, if $g(x,s)\neq g(x,s-1)$, split the $n$-block containing $b_{x}$. It is clear that the construction works and that given any isomorphism $h:A\to B$, $h\oplus h^{-1}\geq_{PR}g^{*}$ as explained earlier.

Now suppose that $L$ does not contain $n*\eta$ as a sub-interval. Then it has to contain at least one of $\omega,\omega^{*}$ or $\zeta$. As before, we enumerate the rest of $L$ in the standard way and turn our attention to constructing only the sub-interval containing either $\omega,\omega^{*}$ or $L$. We only explain the proof for the case when $L=\omega$ as a modification to either $\omega^{*}$ or $\zeta$ is trivial.

Let $A$ be the standard copy; $a_{0}<a_{1}<a_{2}\dots$. In $B$, we once again fix a standard increasing chain $b_{0}<b_{1}<\dots$. Whenever $g(x,s)\neq g(x,s-1)$, enumerate a new element directly adjacent to $b_{x}$. Given any isomorphism $h:A\to B$, and any $x\in\omega$, compute $g(x,h(a))$ where $a$ is the successor of $h^{-1}(b_{x})$. This value is evidently equal to $g^{*}(x)$. 
\end{proof}

\

\begin{theorem}\label{thm:omteta}
    $\prcat(\omega*\eta)=\cone(\Delta_{3}^{0})$.
\end{theorem}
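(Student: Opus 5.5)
The upper inclusion $\cone(\Delta_{3}^{0})\subseteq\prcat(\omega*\eta)$ comes from Fact~\ref{fact:conesubspec}, once we know that $\omega*\eta$ is $\Delta_{3}^{0}$-categorical. This is standard. In a computable copy, the block relation ``$x,y$ lie in the same $\omega$-block'' is $\Sigma_{2}^{0}$: there are finitely many elements between $x$ and $y$. The relation ``$x$ is the first element of its block'' is $\Pi_{2}^{0}$. Hence $\mathbf{0}''$ can carry out a back-and-forth on first elements of blocks, which form a copy of $\eta$, and then extend the matching inside each block through the successor function, which is $\Delta_{2}^{0}$. The main work is the reverse inclusion. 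For that, given a total $\Delta_{3}^{0}$ function $g^{*}$, we build punctual copies $A,B\cong\omega*\eta$ such that $g^{*}\leq_{PR}h\oplus h^{-1}$ for every isomorphism $h:A\to B$.

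We write $g^{*}(x)=\lim_{s}\lim_{t}g(x,s,t)$ with $g$ primitive recursive. The plan is to encode a $\Delta_{3}^{0}$ stabilising stage by an index that $h$ must produce. Let $A$ be the standard copy. Its blocks are produced at dense positions, and each block grows by one element per stage, so the $k$-th element of the block of any $a$ is found primitive recursively. In $B$ we fix standard elements $b_{x}$, with $b_{x}<b_{x+1}$, and dense material in between. These $b_{x}$ are placed inside blocks and are intended to lie at finite distance from the start of their block. The encoding mechanism is as follows. For each $x$, whenever the inner $\Delta_{2}^{0}$ guess $\lim_{t}g(x,s,t)$ appears to change, meaning the $\Sigma_{2}^{0}$ outcome fires, we insert a fresh element immediately to the left of the current left end of the block containing $b_{x}$. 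While the guess is believed stable, we instead try to ``cut'' the block below $b_{x}$. That is, we stop growing it to the left, let its left part become its own block, and densify.

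The outcome should work like this. If the outer approximation changes infinitely often, $b_{x}$ ends up in a copy of $\zeta$, which is not allowed. So we use the standard $\Pi_{2}/\Sigma_{2}$ trick. Every time the $(x,s)$-guess is confirmed for the current $s$ (a $\Sigma_{2}$ event, infinitely often at the true $s$), the left extension built so far is split off into separate $\omega$-blocks, and $b_{x}$ continues in a block that has a first element. Each block created during a wrong guess is closed off as a genuine $\omega$-block followed by dense material. Since $g^{*}$ is total, the true outer stage $s_{x}$ exists. After it, the block of $b_{x}$ acquires a first element $c_{x}$ whose index is at least the stage at which the correct value was first permanently guessed. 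To decode, given $h$, compute $a=h^{-1}(b_{x})$ and the primitive recursive list of predecessors of $a$ within its $A$-block, namely the first $a$ elements, which bounds the distance. Then take the maximum of $h$ over these elements. This maximum exceeds the index of $c_{x}$, and running the approximation up to that bound yields $g^{*}(x)$. We must make sure the block below $b_{x}$ cannot be arbitrarily long relative to the index of $a$. For this we arrange that the distance from $c_{x}$ to $b_{x}$ is at most the index of $b_{x}$'s preimage, by always inserting at most one left element per stage and keeping $A$'s element indices at least as large as the stage at which positions are created.

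The main obstacle is ensuring $B\cong\omega*\eta$. Wrong guesses must leave no $\zeta$ or $\omega^{*}$ pieces, every split-off fragment must become an $\omega$-block with dense blocks around it, and the interaction between different $x$ must be finite. I would handle this with a tree-of-strategies priority argument, with nodes guessing the $\Pi_{2}$ outcomes of the $\lim_{t}$. On each return of the true path, any left-growing segment attached to $b_{x}$ is cut and its lowest element declared a block start. Between visits we grow rightwards only, which guarantees that left growth stops at the true stage. Verifying that the decoding bound actually reaches $s_{x}$ is where most of the care is needed.
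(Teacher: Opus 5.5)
\textbf{Verdict: there is a genuine gap in the reverse inclusion.} The inclusion $\cone(\Delta^0_3)\subseteq\prcat(\omega*\eta)$ is fine; the gap concerns what you encode and how you decode.

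\textbf{The decoding cannot reach $\Delta^0_3$.} Your decoder extracts a single number $N$, the maximum of $h$ over the predecessors of $h^{-1}(b_x)$ in its $A$-block. Its correctness rests only on $N$ exceeding the index of $c_x$, after which you ``run the approximation up to $N$''. No argument of this shape works for $\Delta^0_3$. Suppose a primitive recursive $\Phi$ and thresholds $T_x$ satisfied $\Phi(x,N)=g^*(x)$ for all $N\ge T_x$. Then $g^*(x)=\lim_N\Phi(x,N)$, so $g^*$ would be $\Delta^0_2$ by the Limit Lemma. Concretely, even knowing $N\ge s_x$ tells you nothing about $\lim_t g(x,N,t)$, because the inner modulus $t_{x,N}$ may lie far beyond $N$. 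Likewise, ``the stage at which the correct value was first permanently guessed'' need not exist: if there were always a stage after which the guessed value is correct, $g^*$ would again be $\Delta^0_2$. Your construction never encodes the inner moduli $t_{x,s}$ at all.

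\textbf{The insertion/cut mechanism is also not coherent as stated.} If the whole left extension is split off at each of the infinitely many confirmations of the true $s$, then every inserted element eventually leaves the block of $b_x$. So $c_x$ is a fixed early element, and not even the outer bound is secured.

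\textbf{What is missing is a two-step, adaptive decoding using two separate encodings.} In the paper, $B=B_0+B_1+\cdots$, where $B_{x+1}\cong\omega+\omega*\eta$ is built from chains $b^{x+1}_{i,0}$. Whenever $P(x,s)$ fires, the chains $m\ge s$ are lengthened and $\omega*\eta$ is grown between consecutive ones. Then the chains below $s_x$ collapse into the initial $\omega$, and every element of the $\omega*\eta$ part has index $\ge s_x$. So $h(a)$, for $a$ in a block of $A$ strictly between $h^{-1}(b^{x+1}_0)$ and $h^{-1}(b^{x+2}_0)$, gives some $s^*\ge s_x$. Since $\lim_t g(x,s^*,t)=g^*(x)$ for every $s^*\ge s_x$, one then uses $s^*$ to select the location $b^0_{x,s^*}$ in $B_0$. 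The successor of $b^0_{x,s^*}$ is replaced each time $g(x,s^*,t)$ changes, so a second query to $h$ yields $t^*\ge t_{x,s^*}$, whence $g^*(x)=g(x,s^*,t^*)$.

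A one-block-per-$x$ variant of your idea could be made to work, but only by adding exactly these ingredients. You would need a way to identify or bound $s_x$ (for instance, pre-labelled elements to the left of $b_x$ whose gaps are densified from $s$ onward when $P(x,s)$ fires). You would also need, separately for each $s$, an encoding of $t_{x,s}$ that the decoder queries after learning $s$.
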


\begin{proof}
It is known that $\omega*\eta$ as a linear order is relatively $\Delta_{3}^{0}$-categorical. By Fact \ref{fact:conesubspec}, we obtain that $\cone(\Delta_{3}^{0})\subseteq\prcat(\omega*\eta)$. The overarching idea shall be as follows. Given some primitive recursive approximation $g(x,s,t)$ to a total $\Delta_{3}^{0}$ function $g^{*}$, we have to encode for each $x$ and $s$, the stages $s_{x}$ and $t_{x,s}$ with the following properties.
\begin{itemize}
    \item For all $t\geq t_{x,s}$, $g(x,s,t)=g(x,s,t_{x,s})$.

    \item For all $s\geq s_{x}$, $\lim_{t}g(x,s_{x},t)=\lim_{t}g(x,s,t)$.
\end{itemize}
As usual, we construct punctual $A,B\cong\omega*\eta$ such that any isomorphism $h:A\to B$ is such that $h\oplus h^{-1}\geq_{PR}g^{*}$, by ensuring that we may primitive recursively recover $s_{x}$ and $t_{x,s_{x}}$ for each $x$ using $h\oplus h^{-1}$.

$A$ will be constructed as the `standard' copy. Each $\omega$-chain will be encoded via $a_{i,0}<a_{i,1}<a_{i,2}<\dots$. We shall refer to this $\omega$-chain as the $i^{th}$ $\omega$-chain. Furthermore, there is a primitive recursive procedure which given any two $i,j$, produces a $k$ such that $k^{th}$ $\omega$-chain is between the $i^{th}$ and $j^{th}$ $\omega$-chain. $B$ will be constructed as countably many sub-intervals, denoted by $B_{i}$ for each $i\in\omega$. $B_{0}$ will be isomorphic to $\omega*\eta$ and each subsequent $B_{i}$ will be isomorphic to $\omega+\omega*\eta$. Then the resulting structure $B=B_{0}+B_{1}+\dots$, will also be isomorphic to $\omega*\eta$. We use $B_{0}$ for the strategy to encode $t_{x,s}$ for each $x,s$, and $B_{x+1}$ for the strategy to encode $s_{x}$.

\

\emph{Encoding $t_{x,s}$:} Within $B_{0}$, fix the elements $b_{x,s}^{0}$ for each $x,s$. At the beginning of the construction, each of these elements will be the least element of their respective $\omega$-chains. Whenever it is discovered that $g(x,s,t)\neq g(x,s,t-1)$, we enumerate a new successor for $b_{x,s}^{0}$. Since $\lim_{t}g(x,s,t)$ is assumed to exist for each $x,s$, there must be some finite stage after which the value of $g(x,s,t)$ no longer changes. That is, we only change the successor of $b_{x,s}^{0}$ finitely often, and as long as we always extend the chain to the right, this process constructs an $\omega$-chain. Furthermore, the final successor of $b_{x,s}^{0}$ clearly has index $\geq t_{x,s}$. To recover this index, simply compute $h:A\to B$, a given isomorphism, on the element $a_{i,1}$ where $i$ is such that $h^{-1}(b_{x,s}^{0})=a_{i,0}$ (note that $b_{x,s}^{0}$ is the left-most element of an $\omega$-chain in $B$). Since $A$ is the standard copy, and by assumption that $h$ is an isomorphism, $h(a_{i,1})$ must be the successor of $b_{x,s}^{0}$.

\

\emph{Encoding $s_{x}$:} Let $P(x,s)$ denote the predicate ``for all $s'\geq s$, $\lim_{t}g(x,s,t)=\lim_{t}g(x,s',t)$''. Since $g$ is primitive recursive, this is a $\Pi_{2}^{0}$ predicate. Just as before, we may also assume that for each $x$, there is a unique $s$ for which $P(x,s)$ holds. During stages at which $P(x,s)$ look to be true, we say that $P(x,s)$ \emph{fires}. Since $P(x,s)$ is $\Pi_{2}^{0}$, it fires infinitely often iff $P(x,s)$ is true. We begin building $B_{x+1}$ by enumerating elements $b_{i,0}^{x+1}$ for each $i\in\omega$. We order these elements as an $\omega$-chain in order of their index $i$, and refer to them as the $i^{th}$ chain.

Whenever $P(x,s)$ fires, for each $n\geq s$, extend the $n^{th}$ chain such that it contains at least $s$ many elements. In addition, grow the sub-interval between the $n^{th}$ and $n+1$-th chain up to stage $s$ with the standard enumeration of $\omega*\eta$. The intention here is that should $P(x,s)$ fire infinitely often, for each $n<s$, there are only finitely many elements between $b_{n}^{x+1}$ and $b_{n+1}^{x+1}$, whilst the sub-interval between $n^{th}$ and $n+1$-th chain will be isomorphic to $\omega*\eta$ for each $n\geq s$.

Finally, to recover $s_{x}$, compute the given isomorphism $h:A\to B$ on the least element $a\in A$ of an $\omega$-chain between $h^{-1}(b_{0}^{x+1})$ and $h^{-1}(b_{0}^{x+2})$. If $h$ is an isomorphism, then $h(a)$ must be strictly between $b_{0}^{x+1}$ and $b_{0}^{x+2}$. In fact, $h(a)$ cannot even be in the same $\omega$-chain as $b_{0}^{x+1}$. But such an element should have index $\geq s_{x}$, as the first $b_{s}^{x+1}$ where there are infinitely many elements between $b_{s}^{x+1}$ and $b_{0}^{x+1}$ must be such that $s\geq s_{x}$.

\

\emph{Construction:} Since $A$ is the standard enumeration, we focus only on $B$. At each stage $n$, do the following.
\begin{description}
    \item[Step 1] For each $\langle x,s\rangle\leq n$ such that $P(x,s)$ fires, enumerate a standard copy of $\omega*\eta$ up to stage $n$ between the $m^{th}$ and $m+1$-th chain in $B_{x+1}$ for each $s\leq m\leq n$. Additionally, add a new right-most element to the $m^{th}$ chain for each $s\leq m\leq n$.

    \item[Step 2] For each $\langle x,s\rangle\leq n$ such that $g(x,s,n)\neq g(x,s,n-1)$, enumerate a new element and define it to be the new successor of $b_{x,s}^{0}$.

    \item[Step 3] For each $\langle x,s\rangle\leq n$, add a new right-most element to the chain currently containing $b_{x,s}^{0}$.
\end{description}

\

\emph{Verification:} We prove the following statements.
\begin{enumerate}[label=(\roman*)]
    \item For each $x\in\omega$, $B_{x+1}\cong\omega+\omega*\eta$. Furthermore, if $P(x,s)$ fires infinitely often, then the sum of the first $s+1$ many chains in $B_{x+1}$ forms a single $\omega$-chain.

    \item $B_{0}\cong\omega*\eta$ and for each $x,s$, the least element of the chain containing $b_{x,s}^{0}$ has index $\geq t_{x,s}$.
\end{enumerate}
Let $x\in\omega$ be given, and let $s_{x}$ be the unique value for which $P(x,s_{x})$ fires infinitely often. By Step 1 of the construction, we thus obtain that for each $m\geq s_{x}$, there are infinitely many stages $n$ during which we introduce a new right-most element to the $m^{th}$ chain. Therefore, for each $m\geq s_{x}$, the $m^{th}$ chain is an $\omega$-chain. In a similar vein, we also obtain that there are infinitely many stages $n$ such that we enumerate a standard copy of $\omega*\eta$ up to stage $n$ between the $m^{th}$ and $m+1$-th chain in $B_{x+1}$ for each $m\geq s_{x}$. Conversely, for each $m<s_{x}$, there is some finite stage after which we never again extend the length of the $m^{th}$ chain, and between the $m^{th}$ and $m+1$-th chain, only a finite part of $\omega*\eta$ is enumerated. That is to say, there are only finitely many elements to the left of the $s_{x}^{th}$ chain in $B_{x+1}$ and all subsequent chains are $\omega$-chains with a copy of $\omega*\eta$ between them. We thus obtain that $B_{x+1}\cong\omega+\omega*\eta+\omega+\omega*\eta+\dots\cong\omega+\omega*\eta$. By convention, any element contained within the $\omega*\eta$ part of $B_{x+1}$ will have index $>s_{x}$, since the $s_{x}^{th}$ chain of $B_{x+1}$ is the tail of the first $\omega$-chain of $B_{x+1}$.

For (ii), fix some $x,s\in\omega$. By Step 2 of the construction, whenever $g(x,s,n)\neq g(x,s,n-1)$, $b_{x,s}^{0}$ gets a new successor. Since $\lim_{t}g(x,s,t)$ is assumed to exist, there is some finite stage after which we never again enumerate a new successor for $b_{x,s}^{0}$. On the other hand, by Step 3 of the construction, there are infinitely many stages during which we extend the chain containing $b_{x,s}^{0}$ to the right. Thus, each such chain will be an $\omega$-chain at the end of the construction. By a careful arrangement of these chains, we may obtain that $B_{0}\cong\omega*\eta$. Furthermore, we also have that the final successor of $b_{x,s}^{0}$ has index $\geq t_{x,s}$, as we always enumerate a new one whenever the approximation $g(x,s,n)$ changes.

Finally, let $h:A\to B$ an isomorphism be given. For any $x\in\omega$, first compute $h^{-1}(b_{0}^{x+1})$ and $h^{-1}(b_{0}^{x+2})$. Since $b_{0}^{x+1}$ and $b_{0}^{x+2}$ are the left-most elements of $B_{x+1}$ and $B_{x+2}$ respectively, they are in distinct $\omega$-chains within $B$. Using the fact that $A$ is a standard enumeration, we may primitive recursively obtain some element $a$ strictly between $h^{-1}(b_{0}^{x+1})$ and $h^{-1}(b_{0}^{x+2})$. Now, computing $h(a)$ allows us to obtain some index $s^{*}\geq s_{x}$, as $h(a)$ must be contained in the copy of $\omega*\eta$ in $B_{x+1}$. Once such an $s^{*}$ is obtained, compute $h(a')$ where $a'$ is the successor of $h^{-1}(b_{x,s^{*}}^{0})$. As before, since $A$ is standard, such an $a'$ can be found primitive recursively from $h^{-1}(b_{x,s^{*}}^{0})$. By the argument above, $h(a')$ must have index $t^{*}\geq t_{x,s^{*}}$. It follows that $g(x,s^{*},t^{*})=\lim_{t}g(x,s^{*},t)=\lim_{s}\lim_{t}g(x,s,t)=g^{*}(x)$. Theorem~\ref{thm:omteta} is proved.
\end{proof}

\

\begin{remark}\label{rem:shuffle}
Observe that if $L=\shuffle(\{\omega\}\cup F)$, then $L\cong L+\omega+L+\omega+\dots$. In other words, with the assumption that $L$ is computably presentable (and hence punctually presentable by Theorem \ref{thm:puncpres}) we may obtain that $\prcat(L)\subseteq\cone(\Delta_{3}^{0})$, by replacing $\omega*\eta$ with $L$ in the proof of Theorem \ref{thm:omteta}.
\end{remark}

\begin{corollary}\label{cor:shuffle}
    Let $F$ be a computable family of linear orders. If $L=\shuffle(\{\omega\}\cup F)$ is $\Delta_{3}^{0}$-categorical, then $\prcat(L)=\cone(\Delta_{3}^{0})$.
\end{corollary}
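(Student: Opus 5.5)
The equality will follow from two inclusions. The inclusion $\cone(\Delta_{3}^{0})\subseteq\prcat(L)$ is immediate from Fact~\ref{fact:conesubspec}: since $L$ is assumed $\Delta_{3}^{0}$-categorical, every PR-degree in $\cone(\Delta_{3}^{0})$ primitive recursively computes some isomorphism and its inverse between any two punctual copies. All the work is in the reverse inclusion $\prcat(L)\subseteq\cone(\Delta_{3}^{0})$. Here I will follow Remark~\ref{rem:shuffle} and adapt the proof of Theorem~\ref{thm:omteta}, with $L$ playing the role of $\omega*\eta$.

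First I need a punctual copy of $L$. Since $F$ is a computable family, $\shuffle(\{\omega\}\cup F)$ has a computable presentation, and Theorem~\ref{thm:puncpres} then gives a punctual one. I will in fact want a ``standard'' punctual copy $A$ with two extra features:
\begin{itemize}
\item the copies of $\omega$ in the shuffle are laid out as primitive recursive chains $a_{i,0}<a_{i,1}<\cdots$;
\item given two indices $i,j$, one can primitive recursively produce a $k$ such that the $k$-th $\omega$-chain lies between the $i$-th and $j$-th.
\end{itemize}
To obtain this, I build the underlying dense skeleton $\eta$ punctually. Points of $\eta$ with even index get an $\omega$-chain grown one element per stage. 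Points with odd index get the $F$-members copied from a punctual presentation of $\{\omega\}\cup F$, padded so that new elements appear every stage; the padding can always be absorbed into an $\omega$ summand. Density of $\eta$ then guarantees that every member of $F$ appears densely, so $A\cong L$.

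Second, I build $B$ as $B_{0}+B_{1}+B_{2}+\cdots$, exactly as in Theorem~\ref{thm:omteta}, using the isomorphism $L\cong L+\omega+L+\omega+\cdots$ from Remark~\ref{rem:shuffle}.
\begin{itemize}
\item $B_{0}$ is a copy of $L$ containing designated left endpoints $b^{0}_{x,s}$ of $\omega$-chains. Whenever $g(x,s,t)$ changes, $b^{0}_{x,s}$ receives a fresh successor, so the final successor has index at least $t_{x,s}$.
\item Each $B_{x+1}$ is built as chains $b^{x+1}_{0}<b^{x+1}_{1}<\cdots$. When $P(x,s)$ fires, chains with index $m\geq s$ are extended, and a standard copy of $L$ (instead of $\omega*\eta$) is grown between consecutive chains beyond $s$.
\end{itemize}
In the limit $B_{x+1}\cong\omega+L+\omega+L+\cdots$. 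Every element outside the initial $\omega$ then has index at least $s_{x}$, and the whole of $B$ is isomorphic to $L$.

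Decoding proceeds as before. Given an isomorphism $h$, take $a$ strictly between $h^{-1}(b^{x+1}_{0})$ and $h^{-1}(b^{x+2}_{0})$, chosen in a different $\omega$-chain (or $F$-block) from $h^{-1}(b^{x+1}_{0})$. Then $h(a)$ lies in the $L$-part of $B_{x+1}$, so its index $s^{*}$ is at least $s_{x}$. Next apply $h$ to the successor of $h^{-1}(b^{0}_{x,s^{*}})$ to obtain $t^{*}\ge t_{x,s^{*}}$, and output $g(x,s^{*},t^{*})$.

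The main obstacle is making the decoding step valid for general $F$. In $\omega*\eta$ every point lies in an $\omega$-chain, so ``$a$ not in the $\omega$-chain of $h^{-1}(b^{x+1}_{0})$'' was easy to arrange; here $b^{x+1}_{0}$ must be the left end of a maximal block isomorphic to $\omega$. To ensure this, I will make the first block of each $B_{x+1}$ immediately preceded by an $L$-segment, whose shuffle has no least block, so that the $\omega$-block is maximal (it cannot merge with a predecessor). The elements $b^{0}_{x,s}$ must likewise start genuine $\omega$-blocks, so that their successor is determined by the isomorphism. Choosing $a$ from the standard skeleton of $A$ in a distinct block then works primitive recursively. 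The remaining check is that $B_{0}$ and the interleavings are still isomorphic to $L$, which follows from the shuffle absorbing $\omega$ summands.
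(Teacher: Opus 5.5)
Your proposal follows the paper's proof. The inclusion $\cone(\Delta^0_3)\subseteq\prcat(L)$ is Fact~\ref{fact:conesubspec}, and the reverse inclusion is exactly Remark~\ref{rem:shuffle}: rerun the proof of Theorem~\ref{thm:omteta} with $L$ in place of $\omega*\eta$ (so $B_0\cong L$ and $B_{x+1}\cong\omega+L$), and use $L\cong L+\omega+L+\omega+\cdots$ to conclude $B\cong L$.

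The extra details you supply (a standard punctual $A$ with primitive recursive $\omega$-summand chains and skeleton, and designated points that begin full $\omega$-condensation classes) are what the paper leaves implicit in ``replace $\omega*\eta$ with $L$''. One tacit assumption remains, and the paper makes it too: you need the successor of $h^{-1}(b^{0}_{x,s})$, and a block separating $h^{-1}(b^{x+1}_{0})$ from $h^{-1}(b^{x+2}_{0})$, to be primitive recursively findable in $A$. This is immediate when those preimages lie in $A$'s $\omega$-summands, but maximality of the $\omega$-blocks in $B$ does not by itself force that.
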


Note that a key property of the linear order used in the proof of Theorem \ref{thm:omteta} is that we can `hide' infinitely many elements with `small' indices in a single $\omega$ chain. This ensures that we are able to create sub-intervals with only elements of sufficiently large index. For example, the copy of $\omega*\eta$ in each $B_{x+1}$ in the proof of Theorem \ref{thm:omteta} only consists of elements with indices larger than some stabilising stage $s_{x}$. Thus, in considering linear orders of the form $\shuffle(F)$ for some family $F$ of linear orders, we suspect that the proof for the case where $F$ contains some infinite linear order (with at least two adjacent elements) will be similar to the proof presented earlier. As such, we turn our attention to the case where $F$ contains no infinite linear orders.

\begin{theorem}\label{thm:shuffle}
    Let $L$ be a linear order of the form $\shuffle(F)$, where $|F|>1$ and $F$ is a computable family of finite linear orders, then $\prcat(L)=\cone(\Delta_{3}^{0})$.
\end{theorem}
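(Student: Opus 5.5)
The statement has two inclusions. For $\cone(\Delta_{3}^{0})\subseteq\prcat(L)$ we invoke Fact~\ref{fact:conesubspec}. It therefore suffices to know that $\shuffle(F)$, for a computable family $F$ of finite linear orders, is $\Delta_{3}^{0}$-categorical.

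For this, note that in any computable copy the successor relation is $\Pi_{1}^{0}$. Hence the relation ``$a$ and $b$ lie in the same maximal finite block'' is $\Sigma_{2}^{0}$: there exist finitely many elements forming a successor chain from $a$ to $b$. The block of an element, together with its size (a member of $F$), is then computable from $\mathbf{0}''$. So $\mathbf{0}''$ can run a back-and-forth argument on blocks. Given a block on one side, we find a block of the same isomorphism type in the required interval on the other side, using density of each type in $\shuffle(F)$, and map it piece by piece. The search is $\Delta_{3}^{0}$ and yields a $\Delta_{3}^{0}$ isomorphism.

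The main content is the inclusion $\prcat(L)\subseteq\cone(\Delta_{3}^{0})$. Fix a total $\Delta_{3}^{0}$ function $g^{*}$ with primitive recursive approximation $g(x,s,t)$. As in Theorem~\ref{thm:omteta}, we encode $t_{x,s}$ (where $\lim_{t}$ stabilises) and $s_{x}$ (the unique $s$ with the $\Pi_{2}^{0}$ predicate $P(x,s)$ true, which fires infinitely often). Let $A$ be the standard punctual copy of $\shuffle(F)$. In $A$ the block structure is primitive recursive, and given two blocks one can primitive recursively produce a block of any prescribed type of $F$ between them.

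For $B$ we use finite blocks of different sizes as the coding device. Since $|F|>1$, pick types $m<m'$ in $F$ with sizes $|m|<|m'|$. We may have to take $F$ sorted by size, and possibly treat separately the case where all members of $F$ share the same size after identifying types; for finite linear orders, type and size coincide, so two distinct members have distinct sizes.

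\emph{Encoding $t_{x,s}$.} Reserve in $B$ a fixed element $b^{0}_{x,s}$, initially the left end of a block being built toward size $|m|$. Whenever $g(x,s,t)$ changes, we "restart" the block: we make the current neighbours of $b^{0}_{x,s}$ into a separate block by densifying, i.e.\ inserting fresh shuffle material between them, and attach fresh elements to reach the intended block. Since we cannot retract successors already declared, "restarting" must be done like the $n$-block splitting in Theorem~\ref{thm:d2lin}. The old partial block stays adjacent to its own elements but is separated from $b^{0}_{x,s}$ by inserting a dense copy of $\shuffle(F)$ enumerated with fresh elements. The old part is then completed to some block of $F$ with fresh elements.

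Given an isomorphism $h$, compute $h^{-1}(b^{0}_{x,s})$. In $A$ we read off its block primitive recursively and apply $h$ to the other members of that block. These images are the final block-mates of $b^{0}_{x,s}$, which have indices $\geq t_{x,s}$. This needs the block of $b^{0}_{x,s}$ to have at least two elements, which we arrange by choosing a type of size $\geq 2$; if $F$ contains only the one-point order besides one other type, we use the larger one.

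\emph{Encoding $s_{x}$.} Mimic $B_{x+1}$ from Theorem~\ref{thm:omteta}, using an interval of $B$ between markers $c_{x}<c_{x+1}$ (left ends of fixed blocks). Inside it we place chains of tentative blocks $d_{0}<d_{1}<\cdots$, kept glued into one growing discrete segment. Whenever $P(x,s)$ fires, we separate $d_{n}$ from $d_{n+1}$ for $n\geq s$ by inserting fresh $\shuffle(F)$ material up to the current stage, and cap each $d_{n}$ into a finite block. If $P(x,s_{x})$ fires infinitely often, the segment before $d_{s_{x}}$ must still become a finite union of blocks with only finitely many elements. This is the main obstacle: unlike $\omega$, the segment $d_{0}\dots d_{s}$ cannot absorb infinitely many growth steps, since blocks are finite.

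Our first attempt is to let the tentative glued segment grow only when $P(x,s')$ fires for the least currently-firing $s'$. Each time a smaller $s$ fires, we finalise (cap into legal $F$-blocks, separated by fresh dense material) everything currently glued. Thus for $n<s_{x}$ the material between markers is finite modulo dense inserts added only finitely often, and all genuinely dense material between $c_{x}$ and $c_{x+1}$ lies beyond $d_{s_{x}}$ and has indices $\geq s_{x}$.

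Decoding then proceeds as in Theorem~\ref{thm:omteta}. Pick in $A$, primitive recursively, a block strictly between $h^{-1}$ of the finitely many early blocks and $h^{-1}(c_{x+1})$. We need to make sure the number of early blocks is bounded by a primitive recursive function of the indices already seen, analogous to $\Psi^{h}$ in Lemma~\ref{lem:d2fininf}. Its image has index $\geq s_{x}$, giving $s^{*}\geq s_{x}$. We then decode $t_{x,s^{*}}$ as above and output $g(x,s^{*},t^{*})$. Verification consists of checking that every interval we create ends up as a $\shuffle(F)$ interval with finitely many additional blocks, which is absorbed since $\shuffle(F)+\text{finite sum of }F\text{-blocks}+\shuffle(F)\cong\shuffle(F)$.
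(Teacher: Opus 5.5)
\textbf{Verdict: the proposal has a genuine gap in the coding of $s_x$.}

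Two parts of your proposal are fine and essentially match the paper:
\begin{itemize}
\item the upper bound via $\Delta^0_3$-categoricity;
\item the coding of $t_{x,s}$, where you re-split the block of $b^0_{x,s}$ with fresh elements and apply $h$ to the block-mates of $h^{-1}(b^0_{x,s})$.
\end{itemize}

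\textbf{Where it fails.} The gap is exactly at the obstacle you flag yourself, and your ``first attempt'' does not get around it. In $\shuffle(F)$, two elements lying in different blocks always have infinitely many elements between them, including blocks of every type. So the part of $[c_x,c_{x+1})$ to the left of $d_{s_x}$ has only two options.
\begin{itemize}
\item It lies inside a single block. That block would contain $c_x,d_0,\dots,d_{s_x-1}$, so it would have at least $s_x$ elements. This is impossible once $s_x$ exceeds every size in $F$, e.g.\ for $F=\{1,2\}$.
\item It contains genuinely dense material. That material was started at some early stage and has to be densified forever, so it contains blocks of every type with small indices.
\end{itemize}
``Dense inserts added only finitely often'' are not dense at all; they simply merge with the neighbouring blocks.

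Consequently, some isomorphism can send a block of $A$ chosen between $h^{-1}(c_x)$ and $h^{-1}(c_{x+1})$ into this low-index region. To avoid it you would need $h^{-1}$ of a marker lying to its right, i.e.\ of $d_{s_x}$, and that presupposes knowing $s_x$. The $\Psi^{h}$-style bound on ``the number of early blocks'' that you say you must ensure cannot be supplied, because that region contains infinitely many blocks.

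\textbf{The missing idea.} Let the \emph{isomorphism type} of blocks, rather than their position, carry the information. The paper does this as follows.
\begin{itemize}
\item Take the two smallest sizes $M<N$ in $F$, and build $B_{x+1}\cong M+\shuffle(F)$.
\item Every block that would be an $M$-block is first built as an $N$-block whose first $M$ elements are tagged with some $P(x,s')$.
\item Whenever $P(x,s)$ fires, for every $s'\ge s$ the tail of each block tagged $P(x,s')$ is cut off into singletons. These singletons are grown into $N$-blocks, and the tagged part receives a new tail.
\end{itemize}
In the limit, the tagged blocks with $s'\ge s_x$ become $M$-blocks, since their tail is cut infinitely often. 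The finitely many tagged blocks with $s'<s_x$ remain $N$-blocks. No other $M$-blocks arise apart from the left-most marker block. Low-index elements may sit anywhere in $B_{x+1}$, but only inside non-$M$ blocks.

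Decoding then needs no bound on early material. Take any $M$-block of $A$ strictly between the block of $h^{-1}(b_{x+1})$ and $h^{-1}(b_{x+2})$. Its image is a non-marker $M$-block of $B_{x+1}$, and so has index $\ge s_x$.
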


\begin{proof}
Fix some computable family of finite linear orders $F$, and let $M<N$ be the two smallest sizes of linear orders contained in $F$. We refer to these as $M$-chains and $N$-chains. Once again, given a total $\Delta_{3}^{0}$ function $g^{*}$, the goal is to encode the stages $s_{x},t_{x,s}$ such that, for each $s\geq s_{x},\,\lim_{t}g(x,s_{x},t)=\lim_{t}g(x,s,t)$, and for each $t\geq t_{x,s},\,g(x,s,t)=g(x,s,t_{x,s})$, where $g$ is a given primitive recursive approximation to $g^{*}$. Let $A$ be the `standard' enumeration of $\shuffle(F)$; there are primitive recursive procedures that do the following.
\begin{itemize}
    \item Given any element $a$, we may discover if $a$ is in a $M$-chain or $N$-chain (or some special symbol if neither holds).

    \item Given any two elements $a,a'$, we may produce the indices of the elements in a $M$-chain (similarly $N$-chain) strictly between $a$ and $a'$.
\end{itemize}
$B$ will be constructed in a similar way as before. Let $B=B_{0}+B_{1}+\dots$, where $B_{0}\cong \shuffle(F)$, and each subsequent $B_{x}\cong M+\shuffle(F)$. We use $B_{0}$ to encode $t_{x,s}$ for each $x,s\in\omega$, and $B_{x+1}$ to encode $s_{x}$ for each $x\in\omega$.

\

\emph{Encoding $t_{x,s}$:} Within $B_{0}$, fix elements $b_{x,s}$, which we keep always as the left-most element of some $N$-chain. Note that $N$ is at least $2$ since $N>M\geq 1$, that is, there will be at least one other element in the $N$-chain containing $b_{x,s}$. Evidently, the elements in the same $N$-chain as $b_{x,s}$ will be used to encode $t_{x,s}$. Whenever $g(x,s,t)\neq g(x,s,t-1)$, we split the current $N$-chain into two new $N$-chains, one containing $b_{x,s}$ with $N-1$ fresh elements (with indices $\geq t$), and the other containing a new element together with the remaining $N-1$ elements. Since $t_{x,s}$ must exist, $b_{x,s}$ will be the left-most element of some $N$-chain, where the other $N-1$ elements all have indices at least $t_{x,s}$. This can be easily recovered by computing $h(a)$ where $a$ is the successor of $h^{-1}(b_{x,s})$ given any isomorphism $h:A\to B$.

\

\emph{Encoding $s_{x}$:} Let $P(x,s)$ be the predicate ``for all $s'\geq s,\,\lim_{t}g(x,s,t)=\lim_{t}g(x,s',t)$'', and assume that for each $x$, there is exactly one $s$ for which $P(x,s)$ \emph{fires} infinitely often (looks to be true for infinitely many stages). The idea here is to ensure that all $M$-chains in $B_{x+1}$, save the left-most one, will have indices $\geq s_{x}$. Fix some enumeration of $\shuffle(F)$. Wherever we would have enumerated an $M$-chain, we instead enumerate a $N$-chain, with its first $M$ many elements \emph{tagged} with $P(x,s)$ for various $s\in\omega$. Whenever $P(x,s)$ fires, for each $s'\geq s$, split each $N$-chain containing an $M$-chain tagged with $P(x,s')$, into $N-M+1$ many new chains, one of which is the tagged $M$-chain, and the rest are singletons. After splitting, extend all singletons to new $N$-chains, and extend the tagged $M$-chain to a new $N$-chain. Each of these tagged $M$-chains will be in one of the following forms at the end of the construction.
\begin{itemize}
    \item In the event that $P(x,s)$ fires infinitely often, any $M$-chain tagged with $P(x,s')$ for $s'\geq s$ will have its `tail' removed infinitely often. Thus, they become $M$-chains in the limit of the construction. In addition, the `junk' produced from removing this `tail' are all turned into $N$-chains.

    \item If we instead have that $P(x,s')$ fires only finitely often for all $s'\leq s$, then after some finite stage, the `tail' of the $M$-chain tagged with $P(x,s)$ never again gets removed. Thus, it remains as an $N$-chain for till the end of the construction.
\end{itemize}
The idea is here is that only finitely many of these tagged chains are $N$-chains, while the rest are turned into $M$-chains. In particular, with some care in arranging how these tagged chains are enumerated, the resulting isomorphism type should still be $\shuffle(F)$. Finally, since the only $M$-chains formed by this procedure are exactly those tagged with $P(x,s')$ where $P(x,s)$ fires infinitely often for some $s\leq s'$, all $M$-chains (except the left-most one) in $B_{x+1}$ must have index $\geq s_{x}$. To recover this stage, let $b_{x+1},b_{x+2}$ be the left-most elements of $B_{x+1}$ and $B_{x+2}$ respectively. Then for any given isomorphism $h:A\to B$, we compute $h(a)$ where $a$ is an element of some $M$-chain strictly between $h^{-1}(b_{x+1})$ and $h^{-1}(b_{x+2})$. That is, $h(a)$ is an $M$-chain that lies strictly between $b_{x+1}$ and $b_{x+2}$, and therefore should have index $\geq s_{x}$.

\

\emph{Construction:} During the construction, we maintain queues $Q_{x}$ to dictate enumerations of the various $B_{x}$. At each stage $n$, do the following.
\begin{description}
    \item[Step 1] For each pair of chains currently in $B_{0}$, and for each member of $F$ currently missing between these chains, add an action to enumerate the missing chain into the queue $Q_{0}$. Then pick the first action in the queue and enumerate the required member of $F$. If we also have that it was an $N$-chain that was enumerated, then label the left-most element of this $N$-chain with $b_{x,s}$ where $\langle x,s\rangle$ is the least label that has yet to be used.

    \item[Step 2] If $g(x,s,n)\neq g(x,s,n-1)$, then perform the procedure as described in the strategy for encoding $t_{x,s}$.
    
    \item[Step 3] Enumerate the left-most $M$-chain of $B_{n+1}$ with a distinguished element $b_{n+1}$.
    
    \item[Step 4] For each $x\leq n$, for each pair of chains in $B_{x+1}$, and for each member of $F\setminus\{M\text{-chain}\}$ currently missing between these chains, add an action to enumerate the missing chain into the queue $Q_{x+1}$. Similarly, for each pair of chains in $B_{x+1}$ which do not yet contain any chain tagged with $P(x,s)$ for any $s\in\omega$ between them, we also add an action to enumerate the missing chain into $Q_{x+1}$. Pick the first action in $Q_{x+1}$ and enumerate the required chain into the corresponding sub-interval.

    \item[Step 5] If $P(x,s)$ fires, apply the procedure as described earlier to the chains tagged with $P(x,s')$ for each $s'$ where $s\leq s'\leq n$.
\end{description}

\

\emph{Verification:} We verify that $B\cong\shuffle(F)$ and that each $B_{x}$ has the desired properties described in the strategies. From Step 1 of the construction, it is evident that $B_{0}\cong\shuffle(F)$. For any two chains, there must be some stage at which at least one of each member of $F$ is enumerated between them. Also, it is evident that for each $x,s\in\omega$, there is an $N$-chain in $B_{0}$ with left-most element $b_{x,s}$, and that the successor of $b_{x,s}$ has index $\geq t_{x,s}$.

It remains to show that for each $x\in\omega$, $B_{x+1}\cong M+\shuffle(F)$. Following the description in the strategy for encoding $s_{x}$, there can only be finitely many tagged chains that are $N$-chains in the limit of the construction; these are exactly the chains tagged with $P(x,s')$ for some $s'<s$ where $s$ is the unique value for which $P(x,s)$ holds. We denote these chains in order by $C_{0},C_{1},\dots,C_{s-2}$. Applying the fact that all other tagged chains will become $M$-chains in the limit of the construction, it follows that Step 4 of the construction guarantees that the sub-interval between $C_{i}$ and $C_{i+1}$ is isomorphic to $\shuffle(F)$. A similar property should also hold for the sub-intervals in $B_{x+1}$ to the left of $C_{0}$ or to the right of $C_{s-2}$. In other words, together with the left-most $M$-chain in $B_{x+1}$, we have that $B_{x+1}\cong M\text{-chain}+\shuffle(F)+N\text{-chain}+\shuffle(F)+N\text{-chain}+\dots+\shuffle(F)$. Since $N$-chains are a member of the family $F$, $B_{x+1}\cong M+\shuffle(F)$.

To recover $g^{*}(x)$, for any given isomorphism $h:A\to B$, apply the following procedure. Compute $h(a)$ where $a$ is an element of an $M$-chain between $h^{-1}(b_{x+1})$ and $h^{-1}(b_{x+2})$. Then $h(a)$ must be contained in an $M$-chain between $b_{x+1}$ and $b_{x+2}$, which guarantees that it has some index $s^{*}\geq s_{x}$. For this index $s^{*}$, find the successor of $b_{x,s^{*}}$, by computing $h(a')$, where $a'$ is the successor of $h^{-1}(b_{x,s})$. This allows us to obtain an index $t^{*}\geq t_{x,s^{*}}$. Finally, compute $g(x,s^{*},t^{*})$, which must equal $g^{*}(x)$. Theorem~\ref{thm:shuffle} is proved.
\end{proof}

\

\begin{theorem}\label{thm:ompeta}
    $\prcat(\omega+\eta)=\cone(\Delta_{3}^{0})$.
\end{theorem}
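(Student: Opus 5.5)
The inclusion $\cone(\Delta_{3}^{0})\subseteq\prcat(\omega+\eta)$ follows from Fact~\ref{fact:conesubspec}, since $\omega+\eta$ is relatively $\Delta_{3}^{0}$-categorical. (In fact it is $\Delta_2^0$-categorical only relative to finding the boundary, and an isomorphism can be built using the $\Pi_2^0$ ``has a successor'' predicate together with the $\omega$-part being the set of elements with finitely many predecessors, a $\Sigma_2^0$ condition.) For the reverse inclusion, fix a total $\Delta_{3}^{0}$ function $g^{*}$ with primitive recursive approximation $g(x,s,t)$. As in Theorems~\ref{thm:omteta} and \ref{thm:shuffle}, we build punctual $A,B\cong\omega+\eta$ so that from any isomorphism $h$ one recovers primitive recursively some $s^{*}\geq s_{x}$ and then some $t^{*}\geq t_{x,s^{*}}$. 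Then $g(x,s^{*},t^{*})=g^{*}(x)$.

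Let $A$ be the standard copy: $a_{0}<a_{1}<\dots$ form the $\omega$-part, and a standard dense part lies to the right, with a primitive recursive procedure producing an element between any two given elements of the $\eta$-part. Unlike $\omega*\eta$, there is now only one $\omega$-chain, so the $t_{x,s}$-encoding must live inside the $\omega$-part of $B$, and the $s_{x}$-encoding inside the $\eta$-part. For the $t_{x,s}$-encoding I adapt Theorem~\ref{thm:d2lin}. In $B$ fix elements $c_{0}<c_{1}<\dots$ intended to form the $\omega$-part, with $c_{\langle x,s\rangle}$ serving $(x,s)$. Whenever $g(x,s,t)\neq g(x,s,t-1)$, insert a fresh element immediately after $c_{\langle x,s\rangle}$. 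Each interval then receives finitely many elements, so the $\omega$-part remains of type $\omega$, and the final successor of $c_{\langle x,s\rangle}$ has index $\geq t_{x,s}$.

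To decode, given $h$, compute $h^{-1}(c_{\langle x,s\rangle})=a_{k}$ (we may check it lies in the $\omega$-part of $A$) and then $h(a_{k+1})$.

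For the $s_{x}$-encoding, place to the right of the $c$'s the blocks $B_{1}+B_{2}+\cdots$ with left endpoints $d_{x+1}$, where each $B_{x+1}\cong 1+\eta$ and the sum is $\eta$. Using the $\Pi_{2}^{0}$ predicate $P(x,s)$ with a unique true instance, arrange that every element strictly inside $B_{x+1}$ has index $\geq s_{x}$. Concretely, $B_{x+1}$ contains finitely-or-not candidate points $e^{x+1}_{0}<e^{x+1}_{1}<\cdots$ (an increasing chain). When $P(x,s)$ fires, densify every interval $(e_{m},e_{m+1})$ for $m\geq s$ up to the current stage, and also densify to the right. The intervals for $m<s_x$ are densified only finitely often.

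Here is the main obstacle, and where $\omega+\eta$ differs from $\omega*\eta$. The finitely many elements of $B_{x+1}$ left of $e_{s_x}$ together with the $e$-chain must not create a non-dense spot or an extra $\omega$. We cannot hide them in an $\omega$-chain, because $B$ must have exactly one. What I would try first is to make the chain $e_{m}$ itself dense in the limit: the $e_m$ for $m<s_x$ plus the finitely many junk points form a finite segment $F$, and the $e_m$ for $m\ge s_x$ with dense gaps form $\eta$ with a left endpoint. To avoid adjacent pairs in $F$, whenever a pair becomes adjacent only because $P$ stopped firing, we instead densify \emph{all} gaps below at each firing of any $P(x,s')$, but only with elements placed into a separate region. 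More cleanly, make $B_{x+1}$ of the form $F+\eta$ and absorb the finite $F$ by placing it immediately after $d_{x+1}$ as a finite discrete segment that is then followed by a dense part. This is still not $\eta$, so instead I would put these finite junk segments at the \emph{end of the $\omega$-part}. That is, $B_{x+1}$'s initial finite portion is identified with new $c$'s, interleaving the $s_{x}$-construction into the $\omega$-chain, which tolerates finitely many extra elements per $x$. This requires the elements of $B_{x+1}$ to be placed in the $\omega$-part tentatively and moved to the $\eta$-part only upon firing, which is impossible for a linear order. So the actual fix is that the split between the $\omega$-part and the $\eta$-part is a cut we control: all $B_{x+1}$ lie in the $\eta$-part, and the finite junk left of $e_{s_x}$ is densified anyway but with points of index forced $\geq s$ only at firing stages, using the ``minimum index'' trick.

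For decoding, pick $a$ strictly between $h^{-1}(d_{x+1})$ and $h^{-1}(d_{x+2})$ in $A$ and read $s^{*}$ as the index of $h(a)$. Verifying that $B\cong\omega+\eta$ and that this index bounds $s_x$ is the delicate part, which I expect to require the most care.
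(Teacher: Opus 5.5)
Your inclusion $\cone(\Delta_{3}^{0})\subseteq\prcat(\omega+\eta)$ via Fact~\ref{fact:conesubspec} is fine, and so is the $t_{x,s}$-coding by inserting fresh successors of $c_{\langle x,s\rangle}$. The gap is the $s_x$-coding. You never complete it, and the layout you settle on cannot be completed. In that layout the cut between the $\omega$-part and the $\eta$-part of $B$ is fixed in advance: an element is in the $\omega$-part iff it lies below $d_1$. So the $\omega$-part of $B$ is a computable set. Then $A\cong B$ via a $\Delta^0_2$ map with $\Delta^0_2$ inverse: $\emptyset'$ can list a computable order of type $\omega$ in increasing order, and two computable dense orders without endpoints are computably isomorphic by back-and-forth. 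For this $h$, $h\oplus h^{-1}$ is $\Delta^0_2$, so it cannot PR-compute any $g^*\notin\Delta^0_2$, however the blocks $B_{x+1}$ are built. Concretely, $(d_{x+1},d_{x+2})\cong\eta$ is homogeneous, so for each $x$ some isomorphism sends your chosen $a$ to the first element the construction ever places in $(d_{x+1},d_{x+2})$. That element's index is a computable function $i(x)$, and $i(x)\geq s_x$ for all $x$ would give $g^*(x)=\lim_t g(x,i(x),t)$, making $g^*$ a $\Delta^0_2$ function. Separately, $(1+\eta)+(1+\eta)+\cdots$ has a least element, so your $B$ would be $\omega+1+\eta\not\cong\omega+\eta$.

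The missing idea is that the $\Delta^0_3$ information must be carried by the position of the $\omega/\eta$ cut itself, with the small-index junk absorbed into the $\omega$-part. You rejected this because elements cannot move, but nothing needs to move.

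\begin{enumerate}
\item The paper starts $B$ as a chain $b_0<b_1<\cdots$ and codes $t_{0,s}$ by the successor of $b_s$.
\item Whenever $P(0,s)$ fires, it densifies $(b_{s'},b_{s'+1})$ for all $s'>s$. In the limit, $b_{s_0}$ has finitely many predecessors, while everything to the right of $b_{s_0+1}$ is dense.
\item To keep the $\omega$-part infinite and to code $s_1,s_2,\dots$, it repeats this inside the interval after each label $\tau$ at or to the right of the current guess, using chains $b_{\tau^{\frown}0}<b_{\tau^{\frown}1}<\cdots$. This is a $\emptyset''$-tree construction whose true path $\delta_x=\langle s_0,\dots,s_{x-1}\rangle$ determines the cut.
\end{enumerate}

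Decoding then goes as follows. Take $a$ in the $\eta$-part of $A$ to the left of $h^{-1}(b_{\delta_{x-1}\!^{\frown}(s_{x-1}+1)})$; the index of $h(a)$ bounds $s_x$. This works because $h(a)$ lies in the $\eta$-part of an interval whose left end is in the $\omega$-part, so the junk is avoided. Next, $s_x$ is recovered exactly as the largest $s$ below this bound with $h^{-1}(b_{\delta_x\!^{\frown}s})$ in the $\omega$-part of $A$. This check is primitive recursive because $A$ is standard. Finally, $t^*$ is read off the successor of $b_{\delta_x\!^{\frown}s_x}$.
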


\begin{proof}
As before, we use $s_{x}$ and $t_{x,s}$ to denote the stages such that $\lim_{t}g(x,s_{x},t)=\lim_{s}\lim_{t}g(x,s,t)$ and $g(x,s,t_{x,s})=\lim_{t}g(x,s,t)$, for a given primitive recursive approximation $g$ to a total $\Delta_{3}^{0}$ function $g^{*}$. Let $A$ be the standard copy of $\omega+\eta$; both the $\omega$ part and $\eta$ parts are enumerated in the standard way. Unlike the proofs of Theorem \ref{thm:omteta} and \ref{thm:shuffle}, we no longer have unique places to encode the various stabilising stages. We need to be able to `nest' the locations at which we use to encode the various $t_{x,s}$ and $s_{x}$. As such we take a slightly different approach from before.

\

\emph{Encoding $g^{*}(0)$:} There are two main things which we need to encode. First, we have to encode $s_{0}$, and second, for each $s\in\omega$, we must encode $t_{0,s}$. $B$ will start off as an $\omega$-chain, consisting of elements $b_{0}<b_{1}<\dots$. Whenever $g(0,s,t)\neq g(0,s,t-1)$, we enumerate a new successor for the element $b_{s}$. Obviously the intention here is to ensure that the successor of $b_{s}$ has index $\geq t_{0,s}$.

Let $P(x,s)$ be the $\Pi_{2}^{0}$ predicate ``for all $s'\geq s,\,\lim_{t}g(x,s,t)=\lim_{t}g(x,s',t)$'', and assume that for each $x$, there is a unique $s$ for which $P(x,s)$ holds. Whenever $P(0,s)$ fires, for each $s'>s$, `densify' the interval between $b_{s'}$ and $b_{s'+1}$. Since $P(0,s)$ holds iff $P(0,s)$ fires infinitely often, it is evident that every interval between $b_{s}$ and $b_{s+1}$ for $s>s_{0}$ is isomorphic to $\eta$. The idea is that $b_{s_{0}}$ should remain in the $\omega$ part of $B$.

To recover the value $g^{*}(0)$, for a given isomorphism $h:A\to B$, compute $h$ on some element within the $\eta$ part of $A$. Since $h$ is an isomorphism, this should map to the $\eta$ part of $B$. This should have index $s^{*}\geq s_{0}$. Once such an index is obtained, we compute $h^{-1}(b_{s})$ for each $s\leq s^{*}$. Since $A$ is standard, we may primitive recursively discover if $h^{-1}(b_{s})$ is in the $\omega$ or $\eta$ part of $A$. By the strategy described above, at least one of $h^{-1}(b_{s})$ should be contained in the $\eta$ part of $A$. The final $s$ for which $h^{-1}(b_{s})$ is contained in the $\omega$ part of $A$ necessarily has index exactly $s_{0}$. After $s_{0}$ is found, we again use $h^{-1}$ and $h$ to obtain the successor of $b_{s_{0}}$, which should have index $\geq t_{0,s_{0}}$, thus allowing us to compute $g^{*}(0)$.

\

\emph{Labelling elements:} Before we explain how we might nest the strategies to encode the various $g^{*}(x)$, we first introduce a system of labels for the elements of $B$. Each label is a string $\sigma\in\omega^{<\omega}$, and if it has length $x+1$, then it is enumerated for the sake of encoding $g^{*}(x)$. As explained earlier, the elements $b_{0},b_{1},\dots$ are for the sake of encoding $g^{*}(0)$. We induce a well-order on the labels (and thus the elements) by letting $\sigma\leq\tau$ iff $\sigma$ is a prefix of $\tau$ or $\sigma$ is lexicographically left of $\tau$.

\

\emph{Encoding $g^{*}(x)$:} To nest the strategies, we define \emph{active} intervals recursively. Whenever $P(0,s)$ fires, all intervals to the left of $b_{s}$ is declared to be inactive for $x=1$, and all intervals to the right of $b_{s}$ will be declared active for $x=1$. These definitions remain until the next $P(0,s')$ fires, after which we renew the definitions of active and inactive (for $x=1$). The idea here is that we only apply the strategy for $g^{*}(x)$ in intervals that have been declared active for $x$.

Let $\sigma$ be such that the interval to the left of $b_{\sigma}$ has been declared inactive for $x$, and the interval to the right of $b_{\sigma}$ has been declared active for $x$. For each $\tau\geq\sigma$ (recall the ordering defined on the labels earlier), we begin enumerating the elements $b_{\tau^{\frown}0},b_{\tau^{\frown}1},\dots$ for the sake of encoding $g^{*}(x)$. Whenever $g(x,s,t)\neq g(x,s,t-1)$, enumerate a new successor to the element $b_{\tau^{\frown}s}$ with index $\geq t$. If $P(x,s)$ fires, then for each $s'>s$, `densify' the interval between $b_{\tau^{\frown}s'}$ and $b_{\tau^{\frown}(s'+1)}$. In addition, declare the interval to the left of $b_{\sigma^{\frown}s}$ inactive for $x+1$, and the interval to the right of $b_{\sigma^{\frown}s}$ active for $x+1$ (see Fig.~\ref{fig:ompeta} for an example).
\begin{figure}
    \centering
    \begin{tikzpicture}
    \node at (0,0) {$\dots$};
    \node at (1,0) [label={\tiny $b_{s}$}] {$\bullet$};
    \node at (7,0) [label={\tiny $b_{s+1}$}] {$\bullet$};
    \node at (11,0) [label={\tiny $b_{s+2}$}] {$\bullet$};
    \node at (12,0) {$\dots$};

    \foreach \x/\y in {2/0,3/1,5/t}
    {
    \node at (\x,0) [label={\tiny $b_{s,\y}$}] {$\bullet$};
    }
    \node at (4,0) {$\dots$};
    \node at (6,0) {$\dots$};
    
    \foreach \x/\y in {8/0,9/1}
    {
    \node at (\x,0) [label={\tiny $b_{s+1,\y}$}] {$\bullet$};
    }
    \node at (10,0) {$\dots$};
    
    \draw[<->] (5.1,-0.5) --node[below,pos=0.5]{\tiny Active for $2$} (12.5,-0.5);
    \draw[<->,dashed] (-0.5,-0.5) --node[below,pos=0.5]{\tiny Inactive for $2$} (4.9,-0.5);

    \draw[<->] (1.1,1) --node[above,pos=0.5]{\tiny Active for $1$} (12.5,1);
    \draw[<->,dashed] (-0.5,1) --node[above,pos=0.5]{\tiny Inactive for $1$} (0.9,1);
    
    \end{tikzpicture}
    \caption{Active intervals if $P(0,s)$ and $P(1,t)$ fires.}
    \label{fig:ompeta}
\end{figure}
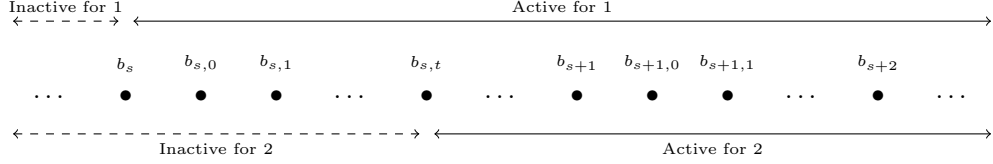

The rough idea here is that for each $y$, there will only be finitely many elements to the left of $b_{s_{0},s_{1},\dots,s_{y}}$, and that the interval to the right of $b_{s_{0},s_{1},\dots,s_{y}+1}$ is isomorphic to $\eta$. Furthermore, $b_{s_{0},s_{1},\dots,s_{y}}$ is contained in the $\omega$ part of $B$, and $b_{s_{0},s_{1},\dots,s_{y}+1}$ is contained in the $\eta$ part of $B$. In order to recover $g^{*}(x)$, assume recursively that $s_{0},s_{1},\dots,s_{x-1}$ have all been found. Given any isomorphism $h:A\to B$, computing $h(a)$ where $a$ is in the $\eta$ part of $A$, and to the left of $h^{-1}(b_{s_{0},s_{1},\dots,s_{x-1}+1})$, provides an upper bound on the index $s^{*}$, for which $b_{s_{0},s_{1},\dots,s_{x-1},s^{*}}$ is contained in the $\omega$ part of $B$. By computing $h^{-1}(b_{s_{0},s_{1},\dots,s_{x-1},s})$ for each $s\leq s^{*}$, we may discover exactly the largest $s$ such that $b_{s_{0},s_{1},\dots,s_{x-1},s}$ is contained in the $\omega$ part of $B$. By the construction, this $s$ should be exactly $s_{x}$. Once $s_{x}$ is found, we may easily recover an index $t^{*}\geq t_{x,s_{x}}$ by computing the successor of $b_{s_{0},s_{1},\dots,s_{x}}$ using $h,h^{-1}$.

It is perhaps not too surprising that the ideas above very closely resemble a $\emptyset''$-priority argument. In some ways, the `true path' computes the point of separation between the $\omega$ and $\eta$ parts of our structure $B$, which aligns with the main difficulty in finding an isomorphism between two presentations of $\omega+\eta$.

\

\emph{Construction:} During the construction, we maintain parameters $s_{x,n}$ for each $x\leq n$ defined to be the most recent value $s$ for which $P(x,s)$ has fired. Clearly, $\liminf_{n}s_{x,n}=s_{x}$, the value for which $\lim_{t}g(x,s_{x},t)=g^{*}(x)$. For notational convenience, let $\delta_{0,n}=\langle\rangle$ and $\delta_{x,n}\coloneqq\langle s_{0,n},s_{1,n},\dots,s_{x-1,n}\rangle$. At stage $n$ of the construction, do the following.
\begin{description}
    \item[Step 1] For each $x\leq n$, \emph{densify} the interval to the right of $b_{\delta_{x,n}\!^{\frown}(s_{x,n}+1)}$; for each pair of elements currently adjacent in the interval, enumerate a new element between them. By convention, we may assume that such an element has index larger than the pair of previously adjacent elements.

    \item[Step 2] For each $i,x\leq n$ and for each $\tau\geq\delta_{x,n}$ where $|\tau|=x$, enumerate the elements $b_{\tau^{\frown}i}$ into $B$ if they have yet to be enumerated. For a given $\sigma$ and $x$, when enumerating elements with labels $\sigma^{\frown}x^{\frown}0$, ensure that all earlier elements enumerated by Step 1 within the interval $b_{\sigma^{\frown}x}$ and $b_{\sigma^{\frown}(x+1)}$ is to the left of $b_{\sigma^{\frown}x^{\frown}0}$. The idea is to keep the indices of elements between $b_{\sigma^{\frown}x^{\frown}0}$ and $b_{\sigma^{\frown}(x+1)}$ (relatively) large. 
    
    \item[Step 3] If $g(x,s,n)\neq g(x,s,n-1)$, then for each $\tau\geq\delta_{x,n}$ such that $|\tau|=x$, enumerate a new successor for the element $b_{\tau^{\frown}s}$ provided such an element is currently in the structure.

    \item[Step 4] If $P(x,s)$ fires, then update $s_{x,n+1}=s$.
\end{description}

\

\emph{Verification:} Let $\delta_{x}$ be defined as $\langle s_{0},s_{1},\dots,s_{x-1}\rangle$, where $s_{y}$ is the value for which $\lim_{t}g(y,s_{y},t)=g^{*}(y)$. We prove that for each $x\in\omega$, there are only finitely many elements ever enumerated to the left of $b_{\delta_{x}\!^{\frown}s_{x}}$ and that the interval to the right of $b_{\delta_{x}\!^{\frown}(s_{x}+1)}$ is isomorphic to $\eta$. 

Let $n^{*}$ be some stage large enough such that $P(0,s)$ never again fires for any $s<s_{0}$. Such a stage must exist, as we assumed that there is exactly one $s$ for which $P(0,s)$ fires infinitely. From the construction, at each stage $n$, we only enumerate elements for the following reasons.
\begin{itemize}
    \item First, the element enumerated was for the sake of Step 1. These elements are enumerated to densify intervals to the right of $b_{\delta_{y,n}\!^{\frown}(s_{y,n}+1)}$. By the ordering defined on the labels, for any $y$, we always have that $\delta_{y,n}\!^{\frown}(s_{y,n}+1)\geq s_{0,n}\geq s_{0}$ (as labels) for any $n\geq n^{*}$. Thus, such elements are only enumerated to the right of $b_{s_{0}}$.
    
    \item Second, the element enumerated is of the form $b_{\tau^{\frown}i}$ where $\tau\geq\delta_{y,n}$ and $|\tau|=y$ (for the sake of Step 2). We may assume that $y>0$, as there are necessarily only finitely many elements of the form $b_{\delta_{0,n}\!^{\frown}i}$ to the left of $b_{s_{0}}$. Just as before, for $y>0$, we always have that $\delta_{y,n}\geq s_{0,n}\geq s_{0}$, for any $n\geq n^{*}$, and thus, $b_{\tau^{\frown}i}$ must be to the right of $b_{s_{0}}$.

    \item Finally, Step 3 possibly adds only finitely many more elements for each labelled element currently in the structure. Thus, after some finite stage, this stops affecting the finitely many labelled elements to the left of $b_{s_{0}}$ added by Step 2.
\end{itemize}
Thus, after some finite stage, no new elements are ever enumerated to the left of $b_{s_{0}}$. It is also easy to see that since $P(0,s_{0})$ fires infinitely often, there are infinitely many stages during which we densify the interval to the right of $b_{s_{0}+1}$. Therefore, the interval to the right of $b_{s_{0}+1}$ is isomorphic to $\eta$.

Now let $x>0$ be given. By induction, we have that there are only finitely many elements ever enumerated to the left of $b_{\delta_{x}}=b_{\delta_{x-1}\!^{\frown}s_{x-1}}$, and that the interval to the right of $b_{\delta_{x-1}\!^{\frown}(s_{x-1}+1)}$ is isomorphic to $\eta$. Fix a stage $n^{*}$ large enough such that after stage $n^{*}$, for any $y\leq x$, $P(y,s)$ never again fires for any $s<s_{y}$, and no more elements are enumerated to the left of $b_{\delta_{x}}$. We again analyse where elements are enumerated by the construction after such a stage.
\begin{itemize}
    \item Step 1 of the construction might densify intervals to the right of $b_{\delta_{y,n}\!^{\frown}(s_{y,n}+1)}$ for each $y\leq n$. Notice that $y$ is possible larger than $x$. By choice of $n^{*}$, we have that for each $n\geq n^{*}$ and for each $z\leq x,\,s_{z}\leq s_{z,n}<s_{z,n}+1$. Therefore, by the ordering defined on the intervals, $\delta_{y,n}\!^{\frown}(s_{y,n}+1)\geq\langle s_{0},s_{1},\dots,s_{y},\dots,s_{x}\rangle=\delta_{x}\!^{\frown}s_{x}$. After stage $n^{*}$, this action only causes elements to be enumerated to the right of $b_{\delta_{x}\!^{\frown}s_{x}}$.
    
    \item For Step 2 of the construction, we enumerate elements of the form $b_{\tau^{\frown}i}$ for each $i,y\leq n$ and for each $\tau\geq\delta_{y,n}$ where $|\tau|=y$. By choice of $n^{*}$, all such elements are to the right of $b_{\delta_{x}}$. Suppose for a contradiction that one of these elements is strictly between $b_{\delta_{x}}$ and $b_{\delta_{x}\!^{\frown}s_{x}}$. If $|\tau|=x$, then for $b_{\tau^{\frown}i}$ to be strictly between $b_{\delta_{x}}$ and $b_{\delta_{x}\!^{\frown}s_{x}}$, it must be that $i<s_{x}$. However, there are clearly only finitely many such elements, and we may simply pick a stage large enough such that all of these have already been enumerated. Thus, we may assume that $|\tau|>x$.

    If $|\tau|>x$ and $b_{\tau^{\frown}i}$ is strictly between $b_{\delta_{x}}$ and $b_{\delta_{x}\!^{\frown}s_{x}}$, then there must be some $n\geq n^{*}$ such that $\delta_{x}\leq\delta_{y,n}\leq\tau<\tau^{\frown}i\leq\delta_{x}\!^{\frown}s_{x}$. By the definition of the ordering on the labels, $\delta_{x}$ must be a prefix of $\delta_{y,n}$. Furthermore, as $|\delta_{y,n}|=y=|\tau|\geq x+1$, $\delta_{y,n}$ in fact contains $\delta_{x}\!^{\frown}s_{x,n}$ as a prefix. By choice of $n^{*}$, we know that for any $n\geq n^{*},\,s_{x,n}\geq s_{x}$, which is to say that $\delta_{y,n}\geq\delta_{x}\!^{\frown}s_{x}$, leading to a contradiction.

    \item For the same reasons as before, since we have that Step 2 only enumerates finitely many labelled elements to the left of $b_{\delta_{x}\!^{\frown}s_{x}}$, Step 3 possibly only contributes finitely many more elements to the left of $b_{\delta_{x}\!^{\frown}s_{x}}$.
\end{itemize}
Thus, there can only be finitely many elements ever enumerated to the left of $b_{\delta_{x}\!^{\frown}s_{x}}$. It is easy to see that since $P(y,s_{y})$ fires infinitely often for each $y\leq x$, then the interval to the right of $b_{\delta_{x}\!^{\frown}(s_{x}+1)}$ is densified infinitely often, resulting in an interval isomorphic to $\eta$.

Notice that by the ordering on the labels, we obtain that $I_{x+1}$ is always a strict sub-interval of $I_{x}$, where $I_{x}$ is defined to be the interval bounded by $b_{\delta_{x}\!^{\frown}s_{x}}$ and $b_{\delta_{x}\!^{\frown}(s_{x}+1)}$. Applying this with the fact that there are always only finitely many elements to the left of $I_{x}$, and the collection of elements to the right of $I_{x}$ is isomorphic to $\eta$, we obtain that $B\cong\omega+\eta$. Furthermore, by the careful enumeration of elements in Step 2 of the construction, we also have that any element in the $\eta$ part of $I_{x}$ has index $>s_{x+1}$.

Let $h:A\to B$ be an isomorphism, and let $x\in\omega$ be given. To compute $g^{*}(0)$, first compute $h(a)$ for an arbitrary element $a$ in the $\eta$ part of $A$. Clearly, $h(a)$ should also be contained in the $\eta$ part of $A$. That is, $h(a)$ should be to the right of elements of the form $b_{\delta_{y}\!^{\frown}(s_{y}+1)}$ for some $y\geq 0$. Any such element is necessarily to the right of $b_{s_{0}}$; $h(a)$ must have index $s^{*}>s_{0}$. By computing $h^{-1}(b_{s})$ for each $s\leq s^{*}$, we may recover $s_{0}$ as the final index for which $h^{-1}(b_{s})$ is still contained in the $\omega$ part of $A$. Once $s_{0}$ is found, compute $h(a^{*})$ where $a^{*}$ is the successor of $h^{-1}(b_{s_{0}})$. By Step 2 of the construction, such an element must have index $t^{*}\geq t_{0,s_{0}}$. Finally, we may then obtain $g^{*}(0)$ by computing $g(0,s_{0},t^{*})$. This process is clearly primitive recursive.

Recursively suppose that we have computed $s_{y}$ for each $y<x$. To compute $g^{*}(x)$, we apply the following procedure. First, compute $h(a)$ for some $a$ in the $\eta$ part of $A$ to the left of $h^{-1}(b_{\delta_{x-1}\!^{\frown}(s_{x-1}+1)})$. That is, $h(a)$ must be contained in the $\eta$ part of $I_{x-1}$, and therefore have index $>s_{x}$. Repeating a similar procedure as before allows us to obtain $s_{x}$ and some index $t^{*}\geq t_{x,s_{x}}$. Thus, $h\oplus h^{-1}\geq_{PR}g^{*}$. Theorem~\ref{thm:ompeta} is proved.
\end{proof}

\section{Boolean Algebras}\label{sec:ba}

Recall that a Boolean algebra is a (functional) structure closed under the binary operations join (denoted $\vee$), meet (denoted $\wedge$), and the unary operation negation (denoted $\neg$), together with two `special' elements often referred to as the top and the bottom. For the purposes of this article, each Boolean algebra we consider will be of the form $\interval(L)$ for some linear order $L$ defined as follows. The Boolean algebra $\interval(L)$ is the smallest collection of subsets of $L$, consisting of $L$, the intervals $[x,y)$ for each $x,y\in L$, and closed under (finite) union, intersection, and complement. Evidently, $L$ and $\emptyset$ are respectively the top and bottom element of $\interval(L)$, and the operations union, intersection, and complement correspond with the join, meet, and negation operations respectively.

\begin{remark}
As one can see, the $\subseteq$ relation induces a natural partial order on the elements of $\interval(L)$. Using this ordering, we say that an element $a$ is \emph{below} $b$ to mean that $a\vee b=b$ (equivalently $a\wedge b=a$). If we further have that $a\neq b$, then we say that $a$ is \emph{strictly below} $b$.
\end{remark}

When working with Boolean algebras, we shall arrange their enumerations using the tree of generators (see \cite{goncharov97} for the details). Briefly speaking, we only `control' the enumerations of some generating set of the Boolean algebra. Since these are meant to be punctual structures, we implicitly assume that at each stage, new elements for the respective joins and meets are added into our structure as required.

\begin{theorem}\label{thm:d1ba}
    If $B$ is an infinite, relatively computably categorical Boolean algebra, then  we have $\prcat(B) = \cone(\Delta_{1}^{0})$.
\end{theorem}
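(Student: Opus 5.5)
The inclusion $\cone(\Delta_{1}^{0})\subseteq\prcat(B)$ is immediate from Fact~\ref{fact:conesubspec}, since relative computable categoricity implies computable categoricity. For the reverse inclusion I would use the classical characterisation (Goncharov, Remmel): an infinite Boolean algebra is (relatively) computably categorical iff it has only finitely many atoms. Since $B$ is infinite, $B$ is then of the form $F\times\interval(\eta)$ with $F$ finite, that is, $\interval(L)$ where $L$ is a finite sum of copies of $\eta$ and finitely many points. As in the proof of Theorem~\ref{thm:d1lin}, the finite part can be fixed non-uniformly and enumerated identically in both copies. It therefore suffices to treat the atomless Boolean algebra $\interval(\eta)$. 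Given a total computable $g$, I will build punctual $A,B\cong\interval(\eta)$ such that $g\leq_{PR}h\oplus h^{-1}$ for every isomorphism $h:A\to B$.

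Let $A$ be the standard punctual atomless algebra, presented through its tree of generators. Each generator is split into two new generators at the next stage, so there is a primitive recursive procedure which, given any nonzero $a\in A$, produces a nonzero element strictly below $a$. In $B$, fix a uniformly primitive recursive sequence of pairwise disjoint generators $b_{0},b_{1},\dots$. At each stage $s$, split off the next $b_{s}$ from the current `remainder' generator, so that $B$ grows punctually and the remainder itself keeps being split and therefore remains atomless. Each $b_{x}$ is left unsplit (temporarily an atom) until the first stage $s$ at which $g(x)[s]\downarrow$. From then on, $b_{x}$ is split in the standard atomless way at every stage. Because $g$ is total, every $b_{x}$ is eventually split, so $B$ has no atoms and $B\cong\interval(\eta)$. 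Moreover, every element of $B$ that lies strictly below $b_{x}$ and is nonzero is created after the convergence stage of $g(x)$, so its index bounds that stage.

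To recover $g(x)$ from an isomorphism $h$, I would first compute $h^{-1}(b_{x})$. Next, primitive recursively find some $a\in A$ with $0<a<h^{-1}(b_{x})$, using the standard presentation of $A$. Then $h(a)$ is a nonzero element strictly below $b_{x}$, so its index is at least the stage at which $g(x)$ converges. Running $g(x)$ for $h(a)$ steps therefore yields $g(x)$.

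The main obstacle I expect is the punctual bookkeeping. Delaying the splitting of $b_{x}$ must not delay the enumeration of $B$ or of its joins, meets and complements. One also has to check that every nonzero element strictly below $b_{x}$, including Boolean combinations with other generators, genuinely appears only after $b_{x}$ has been split. I would handle this by working entirely within the tree-of-generators presentation. There, an element lies strictly below the generator $b_{x}$ iff it is a finite join of proper descendants of $b_{x}$, and those descendants exist only after splitting. Choosing the indexing of generated elements monotonically in the stage of creation then ensures that any such element has index at least that stage.
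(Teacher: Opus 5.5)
Your proposal is correct and follows the paper's proof essentially step for step. You reduce to $\interval(\eta)$ via the finite-atoms characterisation, split off the generators $b_x$ from a remainder chain and keep each one as a temporary atom until $g(x)$ converges, and then decode $g(x)$ by applying $h$ to a nonzero element strictly below $h^{-1}(b_x)$, found primitive recursively in the standard copy $A$. Your extra remarks make explicit what the paper leaves implicit: the lower inclusion via Fact~\ref{fact:conesubspec}, and the tree-of-generators check that no nonzero element below $b_x$ exists before $b_x$ is split.
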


\begin{proof}
Computably categorical Boolean algebras are finite sums of finite Boolean algebras with the countable atomless Boolean algebra \cite{gd80,remmel81b}. (In addition, the notions of $\Delta^0_1$-categoricity and relative $\Delta^0_1$-categoricity for computable Boolean algebras coincide.) When constructing our punctual copies $A,B$, we enumerate all the finite parts at the very first stage of the construction, and may thus assume that we need only construct $A,B\cong\interval(\eta)$. As usual, given a total computable function $g$, we encode the stage at which $g(x)\downarrow$ for each $x$.

Let $A$ be the standard enumeration of $\interval(\eta)$. In $B$, we reserve special elements $b_{0},b_{1},\dots,$ and leave them temporarily as atoms until $g\downarrow$. More specifically, while waiting for $g(x)\downarrow$, $b_{x}$ will remain as an atom. Once $g(x)\downarrow$, then we begin splitting $b_{x}$. This ensures that any element strictly below $b_{x}$ has index larger than the stage at which the computation $g(x)$ halts.

\

\emph{Construction:} At stage $0$, split the top element of $B$ into $b_{0}$ and $c_{0}\coloneqq\neg b_{0}$. During subsequent stages $s$, split $c_{s-1}$ into $b_{s}$ and $c_{s}\coloneqq c_{s-1}\wedge\neg b_{s}$. In addition, if $g(x)[s]\downarrow$, then begin enumerating a standard copy of $\interval(\eta)$ below $b_{x}$.

\

\emph{Computing $g(x)$:} Let $h:A\to B$ be an isomorphism. First, compute $h^{-1}(b_{x})$. Since $A$ is standard, we may primitive recursively obtain $a,a'$ such that $a\vee a'=h^{-1}(b_{x})$ and $a\wedge a' = \emptyset$. It follows that $h(a)$ should have index larger than the stage at which $g(x)\downarrow$, as $h(a)\vee h(a')=b_{x}$, and such elements are only enumerated into $B$ after $g(x)\downarrow$.
\end{proof}

\

\begin{theorem}\label{thm:d2ba}
    If $B$ is a relatively $\Delta_{2}^{0}$-categorical Boolean algebra which is not relatively $\Delta^0_1$-categorical, then $\prcat(B)=\cone(\Delta_{2}^{0})$.
\end{theorem}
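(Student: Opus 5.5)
By Fact~\ref{fact:conesubspec}, $\cone(\Delta_{2}^{0})\subseteq\prcat(B)$. For the reverse inclusion, fix a total $\Delta_{2}^{0}$ function $g^{*}$ with primitive recursive approximation $g(x,s)$ and stabilising stages $s_{x}$. We will build punctual $A,C\cong B$ such that $g^{*}\leq_{PR}h\oplus h^{-1}$ for every isomorphism $h\colon A\to C$.

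First we invoke the known characterisation of relatively $\Delta_{2}^{0}$-categorical Boolean algebras (McCoy): $B$ is a finite direct sum of algebras each of which is atomic, atomless, or $\interval(\omega+\eta)$ (the atomic part being a finite sum of $\interval(\omega)$ and finite algebras). Since $B$ is not relatively $\Delta_{1}^{0}$-categorical, at least one summand is $\interval(\omega)$ or $\interval(\omega+\eta)$, so $B$ has infinitely many atoms. As in the earlier proofs, we enumerate all other summands in a standard way at stage $0$ and concentrate on a single summand $D\in\{\interval(\omega),\interval(\omega+\eta)\}$. Note that $\interval(\omega+\eta)\cong\interval(\omega)\times\interval(\eta)$, and the $\interval(\eta)$ factor can be handled standardly, so it essentially suffices to treat $D=\interval(\omega)$. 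The only care needed is that the atoms produced by the encoding must never disturb the atomless part.

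The plan follows the linear order case of Theorem~\ref{thm:d2lin}, using the tree of generators. Let $A$ be the standard copy of $\interval(\omega)$: at stage $s$ we split the current nonatom remainder $c_{s-1}$ into a new atom $a_{s}$ and $c_{s}$, so that atomicity of each generator is primitive recursively decidable. In $C$ we reserve elements $b_{0},b_{1},\dots$ (split off from the remainder in the same way), intending each $b_{x}$ to be, in the limit, an atom. Whenever $g(x,s)\neq g(x,s-1)$, we split $b_{x}$ into a fresh atom $b_{x}'$ and a fresh piece, and rename the piece as the new $b_{x}$, discarding the old one. The pieces $b_{x}'$ split off this way are themselves atoms, and since $g^{*}$ is total, $b_{x}$ is split only finitely often. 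Hence $C$ is an infinite atomic algebra in which every element is a finite join of atoms or the complement of one, so $C\cong\interval(\omega)$, and the final atom $b_{x}$ has index $\geq s_{x}$.

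For the decoding, fix an atom of $C$ whose index is known to be $\geq s_{x}$. It is not yet available, so we use the renaming differently: we keep a fixed element $d_{x}$, the \emph{old} $b_{x}$, whose final atom below it has index at least $s_{x}$. Concretely, we let $d_{x}$ be a fixed element that is forever kept as the join of finitely many atoms, all split off by the changes of $g(x,\cdot)$, and we make every split produce a fresh atom below $d_{x}$ together with a fresh nonatom remainder. To recover $g^{*}(x)$, compute $h^{-1}(d_{x})$. Because $A$ is standard, we can primitive recursively list the atoms of $A$ below $h^{-1}(d_{x})$, since it is a finite join of standard atoms. Applying $h$ to these atoms gives all atoms below $d_{x}$, at least one of which was created at stage $\geq s_{x}$. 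Taking $s^{*}$ to be the maximum index obtained, we output $g(x,s^{*})=g^{*}(x)$.

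The main obstacle is making the splitting respect the Boolean operations. Every split of a generator forces the algebra to contain the new meets and joins, and an element that was "atom-like" must remain a legitimate element. We will therefore need to verify, via the tree of generators, that the limit of these finitely-changing splits is exactly $\interval(\omega)$ (respectively $\interval(\omega+\eta)$). We also need to check that listing the atoms below $h^{-1}(d_{x})$ in $A$ is genuinely primitive recursive. For the $\omega+\eta$ summand we will additionally check that the encoding elements stay inside the atomic factor.
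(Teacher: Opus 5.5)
Your construction for a single $\interval(\omega)$ summand is correct and is essentially the paper's. In both, a fixed element of the delayed copy with primitive recursive index ($d_x$ for you, $\neg b_x$ in the paper) ends up a finite join of atoms, and its atoms are refined each time $g(x,\cdot)$ changes. It is decoded by pulling it back with $h^{-1}$ into the standard copy, where its atoms can be listed primitive recursively. The only difference is the read-off. The paper makes $c_x=b_{x-1}\wedge\neg b_x$ the join of exactly $s$ atoms after a change at stage $s$, so counting the atoms of $h^{-1}(\neg b_x)$ already gives a number $\geq s_x$ and $h$ is never applied. You instead push those atoms forward with $h$ and take the largest index. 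Make sure the piece you call a ``fresh nonatom remainder'' below $d_x$ is split only when $g(x,\cdot)$ changes, so that it ends up an atom. Otherwise $d_x$ is not a finite join of atoms and the listing in $A$ fails.

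Your reduction step, however, contains two false claims. First, McCoy's characterisation gives finite sums of atoms, atomless elements and $1$-atoms (copies of $\interval(\omega)$). $\interval(\omega+\eta)$ is not of this form, so it is not relatively $\Delta^0_2$-categorical; it is one of the relatively $\Delta^0_3$ cases, treated in Lemma~\ref{lem:intompeta} with spectrum $\cone(\Delta^0_3)$. Second, $\interval(\omega+\eta)\not\cong\interval(\omega)\times\interval(\eta)$. In $\interval(\omega+\eta)$, any element containing infinitely many atoms contains some $[n,q)$ with $q$ in the $\eta$ part, hence a nonzero atomless element. So no element $c$ has $\hat{c}\cong\interval(\omega)$, and there is no ``atomic factor'' to confine the encoding to. The product you wrote is $\interval(\omega+1+\eta)$. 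Fortunately this case is vacuous: under the correct characterisation, failure of relative $\Delta^0_1$-categoricity forces an $\interval(\omega)$ summand. Deleting the $\interval(\omega+\eta)$ discussion therefore leaves a correct proof.
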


\begin{proof}
It is known that relatively $\Delta_{2}^{0}$-categorical Boolean algebras are finite sums of computably categorical Boolean algebras, with finitely many copies of $\interval(\omega)$ \cite{mccoy03}. In addition, $\Delta^0_2$-categoricity and relative $\Delta^0_2$-categoricity for Boolean algebras coincide \cite{Baz-14}.) Given any Boolean algebra that is relatively $\Delta_{2}^{0}$-categorical, we may enumerate all except a single copy of $\interval(\omega)$ in the standard way. Thus, we simply focus only on constructing $A,B\cong\interval(\omega)$. Let $g^{*}$ be a total $\Delta_{2}^{0}$ function with primitive recursive approximation $g(x,s)$. Just as in the proof of Theorem \ref{thm:d1ba}, we only describe how the generators of $A,B$ will be enumerated. As usual, we use $s_{x}$ to denote the value such that $g(x,s_{x})=g(x,s)$ for all $s\geq s_{x}$.

\

\emph{Encoding $s_{x}$:} $A$ will be enumerated in the `standard' way; $a_{0}$ is the top element, and for each $i\in\omega$, $a_{i}$ is exactly the join of an atom with $a_{i+1}$. In $B$, we reserve special elements $b_{0},b_{1},\dots,$ where the intention is to encode $s_{x}$ with each $b_{x}$. Each $b_{x}$ will split into finitely many atoms and another copy of $\interval(\omega)$. Whenever $g(x,s)\neq g(x,s-1)$, we split one of the atoms in $\neg b_{x}$ into $s$ many atoms. Observe that this process must terminate as $g^{*}$ is assumed to be total, and thus, $\neg b_{x}$ must be made up of only finitely many atoms. Evidently, the number of atoms that comprises $\neg b_{x}$ encodes $s_{x}$.

\

\emph{Construction:} At stage $0$, enumerate $a_{0}$ and $b$ which is the join of $b_{0}$ and a single atom as the top elements of $A$ and $B$ respectively. At each subsequent stage $s$, split $a_{s-1}$ into $a_{s}$ and a single atom. Do the same for $b_{s-1}$. For convenience, we also define $c_{0}=\neg b_{0}$ and $c_{x}=b_{x-1}\wedge\neg b_{x}$. If it is also discovered that $g(x,s)\neq g(x,s-1)$, then split the atoms currently comprising $c_{x}$ until $c_{x}$ is the join of $s$ many atoms.

\

\emph{Computing $g^{*}(x)$:} It is easy to see that $B\cong\interval(\omega)$, and that for each $x\in\omega$, $c_{x}$ is exactly the join of $s_{x}$ many atoms. From the construction, it is evident that $\neg b_{x}$ should be the join of $s_{0}+s_{1}+\dots+s_{x}$ many atoms. For a given isomorphism $h:A\to B$, computing $h^{-1}(\neg b_{x})$ should thus allow us to retrieve the number of atoms that comprises $\neg b_{x}$. In particular, since $A$ is standard, we may retrieve the number of atoms that comprises $h^{-1}(\neg b_{x})$ primitive recursively. Let $i$ be this number. We may thus obtain $g^{*}(x)$ by computing $g(x,i)$.
\end{proof}

\

\begin{theorem}\label{thm:d3ba}
    If $B$ is a relatively $\Delta_{3}^{0}$-categorical Boolean algebra which is not relatively $\Delta^0_2$-categorical, then $\prcat(B)=\cone(\Delta_{3}^{0})$.
\end{theorem}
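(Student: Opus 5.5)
The inclusion $\cone(\Delta_{3}^{0})\subseteq\prcat(B)$ is immediate from Fact~\ref{fact:conesubspec}. For the reverse inclusion we first invoke the known structural characterisation of relatively $\Delta_{3}^{0}$-categorical Boolean algebras (McCoy \cite{mccoy03}). Such a $B$ is a finite sum of pieces, each of which is either:
\begin{itemize}
\item relatively $\Delta_{2}^{0}$-categorical, i.e.\ finite, $\interval(\eta)$, or $\interval(\omega)$; or
\item one of finitely many copies of $\interval(\omega+\eta)$ or $\interval(\omega^{2})$ (possibly also $\interval(\omega^{2}\cdot\eta)$-type pieces, depending on the exact form of the classification).
\end{itemize}
Since $B$ is not relatively $\Delta_{2}^{0}$-categorical, at least one summand is of the second kind. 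As in the proofs of Theorems~\ref{thm:d1ba} and \ref{thm:d2ba}, we enumerate all other summands in a standard way at stage $0$. We then concentrate on building two punctual copies $A,B'$ of that single summand such that $h\oplus h^{-1}\geq_{PR}g^{*}$ for a given total $\Delta_{3}^{0}$ function $g^{*}$ with primitive recursive approximation $g(x,s,t)$. As before, we must encode $s_{x}$ and $t_{x,s}$ as in Theorem~\ref{thm:omteta}, using the $\Pi_{2}^{0}$ predicate $P(x,s)$, which fires infinitely often iff it holds (with a unique true $s$ for each $x$).

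For the summand $\interval(\omega^{2})$, let $A$ be the standard copy, so that from any element one can primitive recursively read off its decomposition into finitely many atoms plus finitely many standard $\interval(\omega)$-pieces. In $B'$ we split the top into a uniformly primitive recursive sequence of disjoint elements. These are reserved blocks $d_{x,s}$ for encoding $t_{x,s}$, and blocks $e_{x}$ for encoding $s_{x}$. The blocks are handled as follows.
\begin{itemize}
\item \emph{The blocks $d_{x,s}$.} Each $d_{x,s}$ is a copy of $\interval(\omega)$ plus a finite number of atoms. That number is reset to $t$ whenever $g(x,s,t)\neq g(x,s,t-1)$, exactly as in Theorem~\ref{thm:d2ba}. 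So $d_{x,s}$ has $\interval(\omega)$-type with a finite part whose atom count is at least $t_{x,s}$.
\item \emph{The blocks $e_{x}$.} These mimic $B_{x+1}$ in Theorem~\ref{thm:omteta}. Inside $e_{x}$ we maintain a sequence of generators $c^{x}_{0},c^{x}_{1},\dots$, each initially an atom. When $P(x,s)$ fires, every $c^{x}_{m}$ with $m\geq s$ receives one more step of a standard $\interval(\omega)$ enumeration (split off one further atom). Then $c^{x}_{m}$ becomes $\interval(\omega)$ exactly when $m\geq s_{x}$, and stays finite for $m<s_{x}$. Hence $e_{x}\cong\interval(\omega^{2})$, and every non-atomic element below $e_{x}$ that has infinitely many atoms must involve some $c^{x}_{m}$ with $m\geq s_{x}$, created at a stage $\geq s_{x}$.
\end{itemize}
Altogether $B'\cong\interval(\omega^{2})$.

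Decoding works as follows. Given $h$, compute $h^{-1}(e_{x})$. In the standard $A$ we can primitive recursively find a subelement $a\leq h^{-1}(e_{x})$ that is a single $\interval(\omega)$-piece, i.e.\ a non-superatomic element of rank $1$. Then $h(a)$ must meet some $c^{x}_{m}$ with $m\geq s_{x}$. To make this readable from indices, we arrange that any element below $e_{x}$ is built only after all the generators it meets. The index of $h(a)$ then bounds some $s^{*}\geq s_{x}$; since $P(x,s^{*})$'s limit is correct for all $s\geq s_{x}$, we can use $s^{*}$ directly. Next compute $h^{-1}(d_{x,s^{*}})$ and count, primitive recursively in $A$, the atoms in its finite part. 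This gives $t^{*}\geq t_{x,s^{*}}$ and hence $g^{*}(x)=g(x,s^{*},t^{*})$.

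The summand $\interval(\omega+\eta)$ is handled similarly. The block $d_{x,s}$ is $\interval(\omega+\eta)$ with finitely many extra atoms encoding $t_{x,s}$. The block $e_{x}$ is $\interval(\omega+\eta)$ in which atoms $c^{x}_{m}$ for $m\geq s$ are densified (turned atomless) when $P(x,s)$ fires. Decoding uses a standard atomless element of $A$ below $h^{-1}(e_{x})$, whose image has index $\geq s_{x}$, as in Theorem~\ref{thm:d1ba}.

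The main obstacle is the decoding step: guaranteeing that the index of $h(a)$ really bounds $s_{x}$. An element of $B'$ with small index may be a join that only later acquires infinitely many atoms or atomless parts below it. The precaution is that the generators $c^{x}_{m}$ are enumerated as fresh elements only at stage $m$ and are never joined into earlier-named elements except through the reserved top $e_{x}$. We must also check that $a$ can be chosen disjoint from the finitely many ``small'' generators $c^{x}_{m}$, $m<s_{x}$, without knowing $s_{x}$. I would try to achieve this by taking $a$ inside $h^{-1}(e_{x}\wedge\neg c)$, where $c$ ranges over the primitive recursively bounded set of elements below the first splits, and applying a pigeonhole argument as in Lemma~\ref{lem:d2boundchar}.
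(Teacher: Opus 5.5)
\textbf{There is a genuine gap, and it begins with the decomposition.} McCoy's characterisation, the one the paper uses, says that the summands of $B$ that are not relatively $\Delta^0_2$-categorical are finitely many copies of two algebras. The first is $\interval(\omega*\eta)$; here $\omega*\eta$ is $\eta$ copies of $\omega$, so this is the atomic algebra with atomless Fr\'echet quotient. The second is $\interval(\omega+\eta)$.

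$\interval(\omega^{2})$ never occurs. It is superatomic of rank $2$, so it is not a finite sum of such pieces; indeed it is not even $\Delta^0_3$-categorical. The same applies to your $\interval(\omega^{2}\cdot\eta)$-type pieces. So half of your argument treats a case that cannot arise, while $B=\interval(\omega*\eta)$ is not treated at all.

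Your $\interval(\omega^{2})$ decoding does not carry over to $\interval(\omega*\eta)$. In that algebra every element is either finite or isomorphic to the whole algebra, so there is no $\interval(\omega)$-piece to pull back. The paper (Lemma~\ref{lem:intomteta}) proceeds differently.
\begin{itemize}
\item It encodes $t_{x,s}$ in $B$ by a list $L$ of atoms. Whenever $g(x,s,\cdot)$ changes, the atoms in position $\geq\langle x,s\rangle$ are split into a fresh atom and a copy of $B$. The value is recovered by pigeonhole from the permanent atoms $a_{2i+1}$ of $A$.
\item It encodes $s_x$ in $A$ itself, by nested labelled generators $a_\sigma$ that are infinite only if the relevant $P(x_i,s_i)$ fire infinitely often. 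The value is recovered by pulling back $f(x)+1$ disjoint copies $b_{2i}$ of $\interval(\omega*\eta)$.
\end{itemize}

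\textbf{The case you do treat, $\interval(\omega+\eta)$, fails already at the level of isomorphism types.} You need infinitely many pairwise disjoint blocks $d_{x,s}$, $e_x$, each isomorphic to $\interval(\omega+\eta)$. But every element of $\interval(\omega+\eta)$ is a finite union of half-open intervals, so for each $u$ one of $u,\neg u$ lies above only finitely many atoms. Hence no two disjoint elements can both be copies of $\interval(\omega+\eta)$.

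The same rigidity breaks your $\interval(\omega^{2})$ attempt. For each $u$, one of $u,\neg u$ has rank $\leq 1$, so infinitely many disjoint $e_x\cong\interval(\omega^{2})$ along a spine generate $\interval(\omega^{3})$, not $\interval(\omega^{2})$.

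This is why the paper, as in Theorem~\ref{thm:ompeta}, has to nest the encodings along a single spine:
\begin{itemize}
\item labels with entries $\langle s,i\rangle$ in $A$;
\item the chain $c_x=c_{x+1}\vee c^*_{x+1}$ in $B$, with more atoms kept under $b_\sigma$ than under $a_\sigma$, so that pulling back $c_0,\dots,c_x$ forces an index $\geq s^*_x$;
\item $t_{x,s}$ encoded by splitting atoms into a fresh atom plus a copy of $\interval(\eta)$, so that atom counts are not disturbed.
\end{itemize}

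\textbf{Even locally, ``a block plus finitely many extra atoms'' carries no information.} For finite $F$ we have $\interval(\omega)\oplus F\cong\interval(\omega)$ and $\interval(\omega+\eta)\oplus F\cong\interval(\omega+\eta)$. So $h^{-1}(d_{x,s})$ tells you nothing about $t$ unless the count sits on a named finite element, as in Theorem~\ref{thm:d2ba}, or is extracted by a pigeonhole argument on atoms.
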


McCoy~\cite[Theorem~3.14]{mccoy02} proved that relatively $\Delta_{3}^{0}$-categorical Boolean algebras are finite sums of relatively $\Delta^0_2$-categorical Boolean algebras, with finitely many copies of $\interval(\omega*\eta)$ and $\interval(\omega+\eta)$. Therefore, similarly to the proof of Theorem~\ref{thm:d2ba}, here it is sufficient to focus on two cases: $B = \interval(\omega*\eta)$ (see Lemma~\ref{lem:intomteta} below) and $B = \interval(\omega+\eta)$ (Lemma~\ref{lem:intompeta}).

\begin{notation}\label{notat:hat}
    Let $b$ be an element of a Boolean algebra. We use $\hat{b}$ to denote the Boolean algebra below $b$.
\end{notation}

\begin{lemma}\label{lem:intomteta}
    $\prcat(\interval(\omega*\eta))=\cone(\Delta_{3}^{0})$.
\end{lemma}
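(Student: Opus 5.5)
The inclusion $\cone(\Delta_{3}^{0})\subseteq\prcat(\interval(\omega*\eta))$ follows from Fact~\ref{fact:conesubspec}, since $\interval(\omega*\eta)$ is relatively $\Delta_{3}^{0}$-categorical by McCoy's result quoted above. For the reverse inclusion, fix a total $\Delta_{3}^{0}$ function $g^{*}$ with primitive recursive approximation $g(x,s,t)$. As in Theorem~\ref{thm:omteta}, let $t_{x,s}$ be the stabilising stage of $g(x,s,\cdot)$ and $s_{x}$ the unique $s$ for which the $\Pi_{2}^{0}$ predicate $P(x,s)$ holds (fires infinitely often). I will build punctual $A,B\cong\interval(\omega*\eta)$ with $A$ standard, and arrange that $h\oplus h^{-1}$ recovers $s^{*}\geq s_{x}$ and then $t^{*}\geq t_{x,s^{*}}$ for any isomorphism $h:A\to B$. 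The Boolean algebra analogue of an $\omega$-chain is a copy of $\interval(\omega)$, i.e.\ an element $c$ with $\hat{c}\cong\interval(\omega)$. In the standard $A$, given such an element (a ``block'') we can primitive recursively list its atoms, and given an element that is a sum of blocks we can primitive recursively find a block-element properly inside it, or split it into two nonzero disjoint pieces.

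Mirroring Theorem~\ref{thm:omteta}, I split the top of $B$ into pairwise disjoint elements $d_{0},d_{1},\dots$ generated the same way as $b_{s}$, $c_{s}$ in Theorem~\ref{thm:d1ba}. Here $\hat{d}_{0}\cong\interval(\omega*\eta)$ and $\hat{d}_{x+1}\cong\interval(\omega+\omega*\eta)$, so that $B\cong\interval(\omega*\eta)$; this uses $\omega*\eta\cong\omega*\eta+\sum_{x}(\omega+\omega*\eta)$ and that $\interval$ turns ordered sums into direct sums.

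\emph{Encoding $t_{x,s}$:} inside $d_{0}$ reserve elements $e_{x,s}$, each intended to be a block $\interval(\omega)$, placed inside a standardly densified $\interval(\omega*\eta)$. Keep a distinguished atom $u_{x,s}\le e_{x,s}$. Whenever $g(x,s,t)\neq g(x,s,t-1)$, split the current $u_{x,s}$ into two atoms and let the new $u_{x,s}$ be the fresh piece. This mirrors the atom-splitting of Theorem~\ref{thm:d2ba}, and it only happens finitely often, so $e_{x,s}$ remains a copy of $\interval(\omega)$ whose final $u_{x,s}$ has index $\geq t_{x,s}$. The splitting must be arranged as a cofinite process. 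To decode, take $h^{-1}(e_{x,s})$, which is a block in $A$, and compute $h$ on all its atoms up to a primitive recursive bound. One cleaner way to get such a bound is to count atoms of $\neg$ some fixed complement, in the style of Theorem~\ref{thm:d2ba}: put $u_{x,s}$ as the atom-part of a designated finite piece $f_{x,s}$ whose atom count encodes $t_{x,s}$, exactly as $c_{x}$ did. Then $h^{-1}(f_{x,s})$ is a finite join of atoms in $A$, and counting them gives $t^{*}\geq t_{x,s}$.

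\emph{Encoding $s_{x}$:} in $d_{x+1}$ build elements $c^{x+1}_{0},c^{x+1}_{1},\dots$, each initially an atom, intended to lie in blocks. When $P(x,s)$ fires, for each $m\geq s$ (up to the stage) split $c^{x+1}_{m}$ to grow it as a copy of $\interval(\omega)$, and grow a standard $\interval(\omega*\eta)$ piece attached after it. As in the linear order proof, if $P(x,s_{x})$ fires infinitely often, the part of $d_{x+1}$ indexed by $m<s_{x}$ is finite and gets absorbed into the leading $\omega$. Everything atomless-modulo-atoms, that is, every element not below finitely many blocks, then has index $\ge s_{x}$.

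\textbf{Decoding.} Compute $h^{-1}(d_{x+1})$ and pick primitive recursively in $A$ two disjoint nonzero elements below it that are each infinite, so that neither is a finite join of atoms nor a single block. Their $h$-images cannot both lie inside the leading $\interval(\omega)$ part of $d_{x+1}$, whose quotient by the Fréchet ideal is trivial. So one of them has index $\ge s_{x}$, and taking the maximum gives $s^{*}$. Then decode $t^{*}$ from $f_{x,s^{*}}$ as above and output $g(x,s^{*},t^{*})$.

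The main obstacle is making this last step precise. The $\interval(\omega)$ summand has a unique nonprincipal ultrafilter, so any two disjoint infinite elements below $d_{x+1}$ cannot both map into the leading block, and this should force a large index. I must also ensure, via the tree of generators, that every element of $\hat{d}_{x+1}$ outside the leading block genuinely has index $\ge s_{x}$. Elements not below the leading block must be joins involving generators created only after $s_{x}$-firing stages, and arranging this freshness while keeping $B$ punctual is the step I expect to require the most care.
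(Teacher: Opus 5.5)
Your construction rests on objects that do not exist in this algebra. $\interval(\omega*\eta)$ is atomic and its quotient by the Fr\'echet ideal is atomless. So no element $c$ can have $\hat{c}\cong\interval(\omega)$, since $[c]$ would then be an atom of that quotient. The same holds for $\interval(\omega+\omega*\eta)$, which is in fact isomorphic to $\interval(\omega*\eta)$. The $\omega$-chains of the linear order have no supremum, so they are not elements of the interval algebra, and a Boolean algebra isomorphism $h$ does not see the order at all. Hence the blocks $e_{x,s}$, the elements $c^{x+1}_{m}$ grown ``as a copy of $\interval(\omega)$'', and ``the leading $\interval(\omega)$ part of $d_{x+1}$'' are unavailable. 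If you literally grow $c^{x+1}_{m}$ into $\interval(\omega)$ for $m\geq s_{x}$, then $B\not\cong\interval(\omega*\eta)$. Your fallback for $t_{x,s}$ is sound and should replace the block version: a finite element $f_{x,s}$ whose number of atoms is pushed to at least $t$ whenever $g(x,s,t)\neq g(x,s,t-1)$, decoded by counting the atoms of $h^{-1}(f_{x,s})$ in the standard $A$.

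The genuine gap is the decoding of $s_{x}$, which you leave open, and the invariant you aim for is false as stated. The element $d_{x+1}$ itself, or $d_{x+1}\wedge\neg c^{x+1}_{0}$, is infinite, is not below finitely many blocks, and has small index. What you need is that every element below $d_{x+1}$ enumerated before stage $s_{x}$ is either finite or co-finite below $d_{x+1}$. Then any infinite, co-infinite $u\leq h^{-1}(d_{x+1})$, found primitive recursively in the standard $A$, forces $h(u)$ to have index $\geq s_{x}$, using the convention that elements enumerated at stage $n$ have index $\geq n$. One way to arrange this is a spine $d_{x+1}=e_{0}\vee r_{0}$, $r_{m}=e_{m+1}\vee r_{m+1}$, with one split per stage. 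Whenever $P(x,s)$ fires at stage $n$, grow each $\hat{e}_{m}$ with $s\leq m\leq n$ one step towards a standard $\interval(\omega*\eta)$. Then $\hat{e}_{m}$ is finite for $m<s_{x}$ and isomorphic to $\interval(\omega*\eta)$ for $m\geq s_{x}$, so $\hat{d}_{x+1}\cong\interval(\omega*\eta)$. Moreover, any element below $d_{x+1}$ built at a stage $n<s_{x}$ is a join of leaves among $r_{n}$ and the leaves below $e_{0},\dots,e_{n}$. Without an argument of this kind the proof does not go through.

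For comparison, the paper does not transplant the linear-order proof. It makes $A$ the non-standard copy, with nested labels so that $\hat{a}_{2n}\cong\interval(\omega*\eta)$ only along predicates firing infinitely often. It recovers an index $\geq s_{x}^{*}$ by pulling back $f(x)+1$ disjoint standard copies $b_{2i}$ from $B$ and applying pigeonhole on labels. It recovers $t_{x,s}^{*}$ by splitting the atoms of $B$ from position $\langle x,s\rangle$ of the list $L$ onward, then applying $h$ to $\langle x,s\rangle+1$ standard atoms $a_{2i+1}$ of $A$. Repaired as above, your route (all coding in $B$, $A$ standard) would be a legitimate and somewhat simpler alternative.
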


\begin{proof}
Let $g^{*}$ be a total $\Delta_{3}^{0}$ function, and let $s_{x}^{*}$ and $t_{x,s}^{*}$ be such that for all $s\geq s_{x}^{*},\,\lim_{t}g(x,s,t)=\lim_{t}g(x,s_{x}^{*},t)$, and $\lim_{t}g(x,s,t)=g(x,s,t_{x,s}^{*})$, where $g$ is a primitive recursive approximation of $g^{*}$. We construct punctual structures $A,B\cong\interval(\omega*\eta)$ such that any isomorphism $h:A\to B$ is such that $h\oplus h^{-1}\geq_{PR}g^{*}$. As usual, it suffices to encode for each $x,s$ the values $s_{x}^{*}$ and $t_{x,s}^{*}$.

\

\emph{Encoding $t_{x,s}^{*}$:} We arrange the enumeration of $A$ and $B$ using the usual tree of generators. The idea is to keep the indices of the atoms in $B$ `large', while maintaining a `standard' list of atoms in $A$. By computing the given isomorphism on the `standard' atoms in $A$, we will then be able to retrieve these indices larger than $t_{x,s}^{*}$ by computing the given isomorphism on sufficiently many `standard' atoms in $A$.

For each $n\in\omega$, let the elements $a_{2n+1}$ denote the `standard' atoms. In particular, these elements once enumerated will remain as atoms throughout the construction. These may not be the only atoms in $A$, but nevertheless, they provide an infinite list of atoms which we may use to attempt to retrieve the value $t_{x,s}^{*}$. In $B$, we maintain a list of all elements which are temporarily intended to become atoms. Whenever $g(x,s,t)\neq g(x,s,t-1)$, split the $n^{th}$ element in the list for each $n\geq\langle x,s\rangle$. This ensures that within the list, only $\langle x,s\rangle$ many elements have indices $<t$. The value $t_{x,s}^{*}$ may thus be recovered by computing the given isomorphism on $\langle x,s\rangle+1$ many `standard' atoms in $A$. Since any isomorphism must map atoms to atoms, at least one of the images necessarily has index $>t_{x,s}^{*}$.

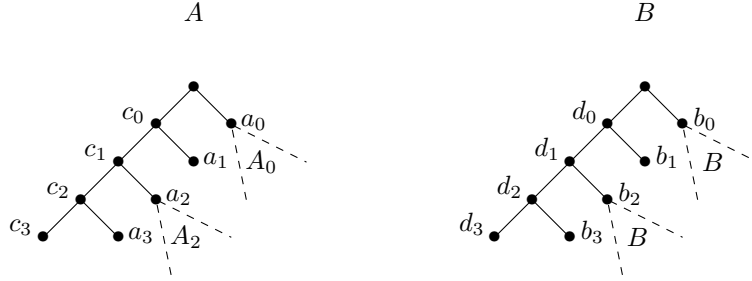
\begin{figure}
    \centering
    \begin{tikzpicture}
    \node at (0,1) {$A$};
    \node at (0,0) {$\bullet$};

    \node at (6,1) {$B$};
    \node at (6,0) {$\bullet$};
    
    \foreach \x/\y in {-0.5/0, -1/1, -1.5/2, -2/3}
    {
    \node at (\x,\x) [label={[shift={(-0.3,-0.3)}]$c_{\y}$}] {$\bullet$};
    \node at (\x+1,\x) [label={[shift={(0.3,-0.4)}]$a_{\y}$}] {$\bullet$};
    \draw (\x,\x) -- (\x+0.5,\x+0.5) -- (\x+1,\x);

    \node at (\x+6,\x) [label={[shift={(-0.3,-0.3)}]$d_{\y}$}] {$\bullet$};
    \node at (\x+7,\x) [label={[shift={(0.3,-0.4)}]$b_{\y}$}] {$\bullet$};
    \draw (\x+6,\x) -- (\x+6.5,\x+0.5) -- (\x+7,\x);
    }

    \foreach \x/\y/\z in {-1/6/B, -1/0/A_{0}, -2/6/B, -2/0/A_{2}}
    {
    \draw[dashed] (\x+\y+1.7,\x-0.5) -- (\x+\y+1.5,\x+0.5) -- (\x+\y+2.5,\x);
    \node at (\x+\y+1.9,\x) {$\z$};
    }
    \end{tikzpicture}
    \caption{The tree of generators for $A\cong B\cong\interval(\omega*\eta)$.}
    \label{fig:intomteta}
\end{figure}

\

\emph{Encoding $s_{x}^{*}$:} Let $P(x,s)$ be the $\Pi_{2}^{0}$ predicate: ``for all $s'\geq s,\,\lim_{t}g(x,s,t)=\lim_{t}g(x,s',t)$''. Referring to Fig.~\ref{fig:intomteta}, within $A$, the elements $a_{2n+1}$ for each $n\in\omega$ will remain as atoms throughout the construction. These will be the `standard' atoms of $A$ mentioned previously. We also construct $A_{2n}$ for each $n\in\omega$ to be isomorphic to $A$ iff $P(x,s)$ where $\langle x,s\rangle=n$ \emph{fires} (defined as usual) infinitely often. As suggested in Fig.~\ref{fig:intomteta}, within $B$, under each element labelled $b_{2n}$ for each $n\in\omega$ is a copy of $B$ itself. The intention here is that these should have preimages $a_{2m}$ (for a given isomorphism from $A$ to $B$), where $\hat{a}_{2m}\cong A$, thus allowing us to obtain the $s$ for some $x$ where $P(x,s)$ fires infinitely often.

To elaborate, for a given isomorphism $h:A\to B$, compute $h^{-1}(b_{0})$. Referring to Fig.~\ref{fig:intomteta}, this should map to an element $a\in A$ such that $\hat{a}$ is isomorphic to $\interval(\omega*\eta)$. Let $n\in\omega$ be the least such that $\hat{a}_{2n}\cong\interval(\omega*\eta)$. By the strategy described above, such an $n$ should be at least as large as $s_{0}$. Since $h$ is an isomorphism,  $a$ should consist of generators with `depth' at least $2n$, as $c_{2n-1}$ is the first generator in $A$ which splits into two disjoint copies of $\interval(\omega*\eta)$. (Observe that $\neg b_{0}$ contains a copy of $\interval(\omega*\eta)$.) Therefore, the index of $h^{-1}(b_{0})$ should be at least as large as $2s_{0}$.

More generally, to recover $s_{x}^{*}$, we compute $h^{-1}$ on sufficiently many (to be specified later) disjoint copies of $\interval(\omega*\eta)$ in $B$. The core idea is to ensure that at least one of these images contains a generator at a sufficiently large `depth' in its representation. However, since we have that $\hat{a}_{2s_{0}}\cong\interval(\omega*\eta)$, the danger is that these $h^{-1}$ images could be spread across those elements generated by the generators below $a_{2s_{0}}$. Thus, some care must also be taken in how exactly we define those generators below the various $a_{2n}$. Let $n=\langle x,s\rangle$ and suppose that $P(x,s)$ fires infinitely often. Below $a_{2n}$, we construct a Boolean algebra isomorphic to $\interval(\omega*\eta)$ similarly to how $A$ is constructed. We define the tree of generators exactly as presented in Fig.~\ref{fig:intomteta}, but with $a_{2n}$ as its top element. Denote the versions of $c_{i}$ and $a_{i}$ below $a_{2n}$ as $c_{i}^{n}$ and $a_{i}^{n}$ respectively. For each $j\leq n$, we leave $a_{2j}^{n}$ as atoms. In particular, the first place at which some generator below $a_{2n}$ splits into two disjoint copies of $\interval(\omega*\eta)$ should have depth at least $s_{x+1}$, if $n=\langle x,s\rangle$ and $P(x,s)$ fires infinitely often. This process shall be repeated recursively in each `nested' copy of $\interval(\omega*\eta)$. To aid in this `nesting' we introduce a system of labels below.

\

\emph{Labelling the generators:} To aid in the description of our tree of generators for $A$, we introduce a labelling system for the generators. Each generator will be labelled via a string in $\omega^{<\omega}$. For each $n\in\omega$, define the generators $c_{n}$ and $a_{n}$ arranged as displayed in Fig.~\ref{fig:intomteta}. Suppose that $a_{\sigma^{\frown}2n}$ has been defined. Then we define $c_{\sigma^{\frown} 2n^{\frown}m}$ and $a_{\sigma^{\frown}2n^{\frown}m}$ as generators below $a_{\sigma^{\frown}2n}$ with the following properties.
\begin{itemize}
    \item Arrange the generators $c_{\sigma^{\frown}2n^{\frown}m}$ and $a_{\sigma^{\frown}2n^{\frown}m}$ as in  Fig.~\ref{fig:intomteta}, replacing each $c_{i}$ with $c_{\sigma^{\frown}2n^{\frown}i}$ and each $a_{i}$ with $a_{\sigma^{\frown}2n^{\frown}i}$, maintaining the respective relations between them. Also define the top element to be $a_{\sigma^{\frown}2n}$.

    \item $\hat{a}_{\sigma^{\frown}2n^{\frown}2m}\cong\interval(\omega*\eta)$ only if all of the following hold: $n=\langle x,s\rangle$, $m=\langle y,t\rangle$, $P(y,t)$ fires infinitely often, $y>x$, and $\hat{a}_{\sigma^{\frown}2n}\cong\interval(\omega*\eta)$.
\end{itemize}
More specifically, we will ensure that $\hat{a}_{\sigma}\cong\interval(\omega*\eta)$ iff for each $i<|\sigma|$, $\sigma(i)=2\langle x_{i},s_{i}\rangle$ where $P(x_{i},s_{i})$ fires infinitely often and $x_{i+1}>x_{i}$.

Adopting the convention that $s_{0}<s_{1}<s_{2}<\dots$, for each $x$, the labels $\sigma$ such that $\hat{a}_{\sigma}\cong\interval(\omega*\eta)$ and $\sigma(i)<2s_{x}^{*}$ for each $i<|\sigma|$, must have length at most $x$ by the desired property above. Furthermore, by adopting the convention that $s_{i}^{*}$ is the unique value for which $P(i,s_{i}^{*})$ fires infinitely, we also obtain that $\sigma(i)\leq 2s_{x-1}^{*}$ for each $i<x$. Thus, the total number of labels with all entries $<2s_{x}^{*}$ is primitive recursive in $x$ and $s_{x-1}^{*}$ (which can be inductively obtained primitive recursively from $x-1$ and the given isomorphism $h$). Let this number be denoted by $f(x)$. By computing $h^{-1}$ on $f(x)+1$ many disjoint copies of $\interval(\omega*\eta)$ in $B$, we must obtain an element with a generator that has label at least $2s_{x}^{*}$. The indices of such disjoint copies of $\interval(\omega*\eta)$ may be obtained using the various $b_{2i}$.

\

\emph{Preliminaries of the construction:} Let $\sigma$ be a label such that for all $i<|\sigma|,\,\sigma(i)=2\langle x_{i},s_{i}\rangle$ for some $x_{i},s_{i}\in\omega$. We say to \emph{grow} $\sigma$ when we enumerate fresh generators $a_{\sigma^{\frown}n}$ and $c_{\sigma^{\frown}n}$ below $a_{\sigma}$. During the construction, we only grow $\sigma$ when it has received \emph{permission} from each $P(x_{i},s_{i})$, defined as follows. Let $s$ be the last stage thus far during which $\sigma$ grew. At a stage $s'>s$, $\sigma$ receives permission from $P(x_{i},s_{i})$ if it has fired at some stage $s''$ where $s<s''\leq s'$. This arrangement ensures that the tree of generators below $a_{\sigma}$ is infinite iff $P(x_{i},s_{i})$ fires infinitely often for each $i<|\sigma|$ and $\sigma(i)=2\langle x_{i},s_{i}\rangle$. By definition, the empty string receives permission at every stage, and thus, $a_{s}$ and $c_{s}$ will be enumerated at stage $s$ of the construction, keeping $A$ punctual.

During the construction, we also maintain an (ordered) list $L$ tracking all generators in $B$ currently intended to be an atom. As explained in the strategy for encoding $t_{x,s}^{*}$, whenever $g(x,s,t)\neq g(x,s,t-1)$, we split the $n^{th}$ generator contained in $L$ where $n\geq\langle x,s\rangle$ into a fresh atom and a copy of $B$. It shall be verified later that this process generates countably many atoms, and ensures that $B\cong\interval(\omega*\eta)$.

\

\emph{Construction:} During stage 0, enumerate the generator $a_{\langle\rangle}$, defining it to be the top element of $A$, and also enumerate the generators $a_{0},c_{0}$ into $A$. Similarly, we enumerate the first three generators in $B$; the top element, and the generators $b_{0},d_{0}$, as shown in Fig.~\ref{fig:intomteta}. Define $L$ to be a currently empty list. During stage $n>0$, do the following.
\begin{description}
    \item[Step 1] Within $B$, split the generator $d_{n-1}$ into $d_{n}$ and $b_{n}$. If $n$ is odd, then add $b_{n}$ into the list $L$. Similarly, split the generator $c_{n-1}$ in $A$ into $a_{n}$ and $c_{n}$. This ensures that both $A$ and $B$ remain punctual.

    \item[Step 2] Let $\langle x,s\rangle\leq n$ be the least such that $g(x,s,n)\neq g(x,s,n-1)$. If no such $x,s$ exists, then we simply proceed to Step 3. Otherwise, let $l_{0},l_{1},\dots,l_{N}$ be the current elements in $L$ by order of their index. For each $m\geq\langle x,s\rangle$, split the generator $l_{m}$ into $b$ and $b^{*}$, adding $b^{*}$ into the list $L$, and removing $l_{m}$ from $L$.

    \item[Step 3] For each $b\in B$ and $b\notin L$, extend $\hat{b}$ by placing a copy of the generators of $B_{n-1}$ ($B$ at the previous stage) with $b$ as the top element. For each $b'\in B_{n-1}\cap L$, place the copy $b''$ of $b'$ under $b$ into $L$. Together with Step 2, this ensures that $L$ contains exactly all the atoms in $B$.

    \item[Step 4] Suppose that $a_{\sigma}$ has been enumerated into $A$. Additionally assume that for each $i<|\sigma|,\,\sigma(i)=2\langle x_{i},s_{i}\rangle$, for some $x_{i},s_{i}\in\omega$ where $x_{i}<x_{i+1}$. If $\sigma$ has permission at stage $n$ from each $P(x_{i},s_{i})$, then we grow $\hat{a}_{\sigma}$ by enumerating the generators $a_{\sigma^{\frown}m}$ and $c_{\sigma^{\frown}m}$ for each $m\leq n$ if they have yet to be enumerated. Observe that if $\sigma$ does not satisfy our assumptions, then we never grow $\sigma$; $a_{\sigma}$ remains as an atom.
\end{description}

\

\emph{Verification:} We split the proof into the following steps.
\begin{enumerate}[label=(\roman*)]
    \item First, we show that for each $\sigma\in\omega^{<\omega}$, $\hat{a}_{\sigma}\cong\interval(\omega*\eta)$ iff for each $i<|\sigma|,\,\sigma(i)=2\langle x_{i},s_{i}\rangle$, where $P(x_{i},s_{i})$ fires infinitely often and $x_{i}<x_{i+1}$.

    \item Second, we show that $b\in B$ is an atom iff $b\in L$, and if $b\notin L$, then $\hat{b}\cong\interval(\omega*\eta)$. In addition, the $\langle x,s\rangle$-th element of $L$ (by order of indices) has an index at least as large as $t_{x,s}^{*}$. (Recall that this is the value such that $\lim_{t}g(x,s,t)=g(x,s,t_{x,s}^{*})$.)
\end{enumerate}
Since there is exactly one $s_{x}^{*}\in\omega$ for each $x\in\omega$, such that $P(x,s_{x}^{*})$ fires infinitely often, (i) ensures that $A\cong\interval(\omega*\eta)$. Similarly, (ii) also ensures that $B\cong\interval(\omega*\eta)$.

To prove (i), we proceed by induction on $|\sigma|$ and show that if $\sigma$ grows infinitely often, then there are infinitely many $m$ for which $\sigma^{\frown}m$ grows infinitely often. The base case is trivial; the only label of length $0$ is the empty string, and all labels of the form $ 2\langle x,s_{x}^{*}\rangle$ grows infinitely often, since $P(x,s_{x}^{*})$ must fire infinitely often by choice of $s_{x}^{*}$. Suppose that $\sigma=2\langle x_{0},s_{0}\rangle^{\frown}2\langle x_{1},s_{1}\rangle^{\frown}\dots^{\frown}2\langle x_{k},s_{k}\rangle$. Recall that in order for $\sigma$ to grow, it has to be that $x_{0}<x_{1}<\ldots<x_{k}$ and that $P(x_{i},s_{i})$ all fire infinitely often for each $i\leq k$. That is, labels of the form $\sigma^{\frown}m$ for any $m\in\omega$ receive permission from $P(x_{i},s_{i})$ infinitely often. If $m$ is also such that $m=2\langle x,s\rangle$ where $x>x_{k}$ and $P(x,s)$ fires infinitely often, then $\sigma^{\frown}m$ grows infinitely often. It is easy to see that there are infinitely many such $m$. Furthermore, these are also exactly the labels which grow infinitely often.

To complete the proof of (i), it suffices to note that $\hat{a}_{\sigma}$ is isomorphic to $A$, with the exception of finitely many atoms. But since $A$ has countably many atoms, this in fact provides an isomorphism from $\hat{a}_{\sigma}$ to $A$ \cite{remmel81b}. In other words, each generator of $A$ either splits into two disjoint copies of $A$, or splits into one copy of $A$ and a finite Boolean algebra, both of which happen infinitely often, and thus, $A\cong\interval(\omega*
\eta)$.

A quick analysis of Steps 1, 2, and 3, of the construction allows us to conclude that, $b\in B$ is an atom iff $b\in L$. Furthermore, if $b\notin L$, then by Step 3 of the construction, at each stage $n$, $\hat{b}\cong B_{n-1}$. Thus, we have that $\hat{b}\cong B$ in the limit for each $b\notin L$. It is obvious that there are infinitely many generators of $B$ that are not in $L$. It remains to argue that $|L|=\infty$. By Step 1 of the construction, after every two stages, at least one new generator is added into $L$. Let $x,s\in\omega$ be given. At a stage $n^{*}$ large enough such that $n^{*}>\max\{t_{y,m}^{*}\mid\langle y,m\rangle\leq\langle x,s\rangle\}$, the current first $\langle x,s\rangle$ many elements of $L$ in order of their indices will never be removed from $L$ via Step 2 of the construction after stage $n^{*}$. It follows immediately that $|L|=\infty$, and therefore, $B\cong\interval(\omega*\eta)$.

\

\emph{Computing $g^{*}(x)$:} Finally, using (i) and (ii), given an isomorphism $h:A\to B$ and $x\in\omega$, we recover $g^{*}(x)$ as follows. First compute $h^{-1}(b_{2i})$ for each $i\leq f(x)+1$ (recall that one more than the number of distinct labels that grow infinitely often and have entries all bounded by $s_{x}^{*}$). By property (i) and the argument before, at least one of the $h^{-1}(b_{i})$ must have index $\geq s_{x}^{*}$. Let the largest index obtained from this computation be $s'\geq s_{x}^{*}$. Using this $s'$, compute $h(a_{2i+1})$ for each $i\leq \langle x,s'\rangle$. Since $a_{2i+1}$ must be an atom, then these have to map to atoms of $B$. In particular, they must map to $\langle x,s'\rangle+1$ many distinct elements of $L$. By pigeonhole principle, at least one of the $h$ images of $a_{2i+1}$ should have index $t'\geq t_{x,s'}^{*}$. We thus obtain that $g(x,s',t')=\lim_{t}g(x,s',t)=\lim_{s}\lim_{t}g(x,s,t)=g^{*}(x)$. Lemma~\ref{lem:intomteta} is proved.
\end{proof}

\begin{lemma}\label{lem:intompeta}
    $\prcat(\interval(\omega+\eta))=\cone(\Delta_{3}^{0})$.
\end{lemma}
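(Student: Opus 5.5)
The inclusion $\cone(\Delta_{3}^{0})\subseteq\prcat(\interval(\omega+\eta))$ follows from Fact~\ref{fact:conesubspec}, because $\interval(\omega+\eta)$ is relatively $\Delta_{3}^{0}$-categorical. For the reverse inclusion, fix a total $\Delta_{3}^{0}$ function $g^{*}$ with primitive recursive approximation $g(x,s,t)$, and define $s_{x}^{*}$, $t_{x,s}^{*}$ and the $\Pi_{2}^{0}$ predicate $P(x,s)$ as in Lemma~\ref{lem:intomteta}. We build punctual $A,B\cong\interval(\omega+\eta)$ so that $h\oplus h^{-1}\geq_{PR}g^{*}$ for every isomorphism $h:A\to B$.

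The plan is to combine Theorem~\ref{thm:ompeta} with the Boolean-algebra coding from Lemmas~\ref{thm:d2ba} and~\ref{lem:intomteta}. The algebra $\interval(\omega+\eta)$ is $\interval(\omega)\oplus\interval(\eta)$, i.e.\ infinitely many atoms together with an atomless part. Its basic invariant is that an element $c$ is atomic with finitely many atoms, atomless, or contains infinitely many atoms. In $A$ we use a standard tree of generators $a_{0}\supseteq a_{1}\supseteq\dots$, where $a_{i}=$ atom $\vee\,a_{i+1}$, and we fix a standard atomless element $e$ with $\neg e$ a standard copy of $\interval(\omega)$. In $A$ we can then primitive recursively produce atoms, atomless elements, and splittings of any element into two disjoint nonzero parts.

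In $B$ we follow the labelled construction of Theorem~\ref{thm:ompeta}, translated via the interval algebra. Each labelled point $b_{\tau}$ of the linear order becomes a generator that is a temporary atom. The intervals $(b_{\tau^\frown s'},b_{\tau^\frown(s'+1)})$ become generators $d_{\tau,s'}$ that are atoms unless $P(x,s)$ fires for some $s<s'$, in which case we split them towards an atomless algebra. Densifying infinitely often, which happens exactly to the right of $b_{\delta_x{}^\frown(s_x+1)}$ along the true path, makes the right part atomless, while everything to the left of $b_{\delta_x{}^\frown s_x}$ contributes only finitely many atoms.

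To encode $t_{x,s}^{*}$, we keep a list $L$ of atoms as in Lemma~\ref{lem:intomteta}: whenever $g(x,s,t)\neq g(x,s,t-1)$, every list element from position $\langle x,s\rangle$ on is split into two, so its atoms get indices $\geq t$. This is harmless, because splitting atoms into finitely many atoms does not change the isomorphism type of the atomic part, and we insist that every generator which ends up atomic does so after finitely many splits. The verification mirrors Theorem~\ref{thm:ompeta}. The true-path argument shows that for each $x$ only finitely many generators lie outside the eventually-atomless region determined by $\delta_{x}{}^\frown s_{x}$, and every atomless element of $B$ lying ``below level $x$'' has index $>s_{x}$. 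Hence $B\cong\interval(\omega)\oplus\interval(\eta)$.

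Decoding is recursive. Suppose $s_{0},\dots,s_{x-1}$ are known. Pick a standard atomless element of $A$ and split it primitive recursively into $f(x)+1$ disjoint atomless pieces, where $f(x)$ bounds the number of generators of $B$ with labels below $\delta_{x}{}^\frown s_{x}$. This bound should be primitive recursive in the previously found $s_{y}$, as in Lemma~\ref{lem:intomteta}. By pigeonhole, one $h$-image has index $s^{*}\geq s_{x}$. We then determine $s_{x}$ exactly by asking, for each $s\leq s^{*}$, whether $h^{-1}(d_{\delta_x,s})$ is atomic in $A$. This is decidable in $A$ because $A$ is standard: for a given element of $A$ we read off its atomless and atomic components. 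Finally, we compute $h$ on $\langle x,s_{x}\rangle+1$ standard atoms of $A$ to get $t^{*}\geq t^{*}_{x,s_{x}}$, and output $g(x,s_{x},t^{*})$.

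The main obstacle is ensuring that atomless elements of $B$ tied to level $x$ really have large indices, despite the joins and meets that are generated automatically. The Boolean closure could create an atomless element with a small index from old generators. I would handle this as in Step~2 of Theorem~\ref{thm:ompeta}: every generator created before the relevant firing is kept atomic, and new atomless material appears only under fresh generators. We then argue that any atomless element must contain a fresh generator, so its index is at least that generator's index.
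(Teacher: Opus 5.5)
Your first inclusion is fine, but the hardness half has two genuine gaps. Both sit exactly where the paper deliberately departs from a direct translation of Theorem~\ref{thm:ompeta}.

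\textbf{Decoding $s_x$.} The unlocalized pigeonhole fails. $B$ contains the pairwise disjoint atomless elements $d_{\langle\rangle,s}$ for $s_0<s<s_x-1$, all to the right of $b_{\langle s_0+1\rangle}$ and all enumerated before stage $s_x$. Their number is not bounded by any function of $s_0,\dots,s_{x-1}$. Their indices are bounded primitive recursively in $s$, so they need not exceed $s_x$. In $\interval(\omega+\eta)$ the complement of a finite join of atomless elements is again of type $\interval(\omega+\eta)$. Hence for any $k$ disjoint atomless elements of $A$ and any $k$ disjoint atomless elements of $B$, some isomorphism $h$ matches them, and $h$ can send your $f(x)+1$ pieces onto these early elements. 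Your $f(x)$ is also circular: the labels below $\delta_x{}^\frown s_x$ include every $\delta_x{}^\frown s$ with $s<s_x$. Theorem~\ref{thm:ompeta} avoids this by localizing, taking $a$ to the left of $h^{-1}(b_{\delta_{x-1}{}^\frown(s_{x-1}+1)})$. You would have to take your atomless element below $h^{-1}(e)$, where $e\in B$ corresponds to $I_{x-1}$.

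\textbf{The $t^{*}$-coding breaks the $s$-coding.} Even after localizing, your claim that every atomless element of $B$ below level $x$ has index $>s_x$ is incompatible with the list $L$.

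If list atoms are split into two atoms, the new atoms have fresh indices, so they sit at the end of $L$. They are then split again at every later stage where some pair changes, so splitting below a given atom need not stop. Regions that must stay finite (e.g.\ $d_{\delta_x,s}$ for $s<s_x$) can then become infinite or atomless, which breaks both $B\cong\interval(\omega+\eta)$ and your atomicity test. ``We insist \dots'' is not a mechanism.

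The paper keeps the atom count constant by splitting an atom $u$ into a fresh atom $u'$ and a copy of $\interval(\eta)$. But then $u\wedge\neg u'$ is an atomless element created at the stage of the $g$-change, which may precede $s_x$. It lies in the $\omega$-tail of $I_{x-1}$, i.e.\ below $e$. Any two atomless elements below an element of type $\interval(\omega+\eta)$ are automorphic, so your decoder cannot avoid this small-index junk.

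This is why the paper remarks that it cannot place standard atoms in either structure. Its architecture is as follows.
\begin{itemize}
\item It makes $A$ the non-standard copy: whether $\hat a_\sigma$ is of type $\interval(\omega+\eta)$, $\interval(\eta)$ or finite depends on $P$.
\item It puts standard elements $c_i$ of type $\interval(\omega+\eta)$ into $B$, with $c_i^{*}$ carrying more atoms than the matching part of $A$.
\item It recovers an index $\geq s_x^{*}$ from $h^{-1}(c_i)$ by counting atoms, which atomless junk does not affect.
\item The atoms used for $t^{*}_{x,s}$ are the atoms of $A$ frozen below generators with last entry $\langle s,2\rangle$, $s<s_x^{*}$. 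There are many of them, and they can be listed primitive recursively once a bound $\geq s_x^{*}$ is known.
\end{itemize}

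Finally, $\interval(\omega+\eta)$ is not $\interval(\omega)\oplus\interval(\eta)$. Its atoms have no supremum, so there is no atomless $e$ with $\widehat{\neg e}\cong\interval(\omega)$; your $A$ would be $\interval(\omega+1+\eta)$. A standard copy should instead look like the paper's $B$: $c_{i-1}=c_i\vee c_i^{*}$ with $\hat c_i^{*}$ finite joined with atomless.
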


\begin{proof}
We adopt the usual proof strategy and aim to encode the values $s_{x}^{*}$ and $t_{x,s}^{*}$ for each $x,s\in\omega$, which allows us to compute a total $\Delta_{3}^{0}$ function; $g^{*}(x)=\lim_{s}\lim_{t}g(x,s,t)=\lim_{t}g(x,s_{x}^{*},t)=g(x,s_{x}^{*},t_{x,s_{x}^{*}}^{*})$.

\

\emph{Encoding $s_{x}^{*}$:} As before, let $P(x,s)$ be the predicate: ``for all $s'\geq s,\,\lim_{t}g(x,s',t)=\lim_{t}g(x,s,t)$''. It is evident that $P(x,s)$ is $\Pi_{2}^{0}$, and we say that it \emph{fires} at some stage $n$ if it temporarily looks to be true. We further assume that for each $x$, the unique $s=s_{x}^{*}$ at which $P(x,s_{x}^{*})$ fires infinitely often is such that $P(x,s)$ never fire after stage $s_{x}^{*}$ for any $s<s_{x}^{*}$ (folklore). This property shall be key in the strategy for encoding $t_{x,s}^{*}$ to be discussed later. It is known that the generators of $\interval(\omega+\eta)$ either splits into a copy of itself and a finite Boolean algebra or it splits into a copy of itself and a copy of $\interval(\eta)$. The idea here is to associate the truth values of $P(x,s)$ with a pair of generators. If $P(x,s)$ fires infinitely often, then the generator associated with $P(x,s)$ being true will be isomorphic to $\interval(\omega+\eta)$ and mutatis mutandis if $P(x,s)$ fires only finitely.

We first consider how the various generators for $P(0,s)$ may be arranged. We label the generators via tuples, $\langle s,i\rangle$ for each $i<3$, with the following relations. For each $s\in\omega$, $a_{\langle s,0\rangle}=\bigvee_{i<3}a_{\langle s+1,i\rangle}$, and $a_{\langle s,i\rangle}\wedge a_{\langle s,j\rangle}=\emptyset$ for any $i\neq j$. Also define $\bigvee_{i<3}a_{\langle 0,i\rangle}$ to be the top element of $A$. Whenever $P(0,s)$ fires, we `grow' (to be defined later) the Boolean algebra below $a_{\langle s,1\rangle}$, and for each $s'\geq s$, split every atom currently below $a_{\langle s',2\rangle}$ and `grow' every generator currently below $a_{\langle s',0\rangle}$. Let $s_{0}^{*}$ be the unique value for which $P(0,s_{0}^{*})$ fires infinitely often. Then for each $s\in\omega$, the construction should ensure that all the following properties hold.
\begin{itemize}
    \item If $s<s_{0}^{*}$, then $\hat{a}_{\langle s,0\rangle}\cong\interval(\omega+\eta)$, and both $\hat{a}_{\langle s,2\rangle}$ and $\hat{a}_{\langle s,1\rangle}$ are finite.

    \item If $s>s_{0}^{*}$, then $\hat{a}_{\langle s,0\rangle}\cong\hat{a}_{\langle s,1\rangle}\cong\hat{a}_{\langle s,2\rangle}\cong\interval(\eta)$. 

    \item If $s=s_{0}^{*}$, then $\hat{a}_{\langle s,0\rangle}\cong\hat{a}_{\langle s,2\rangle}\cong\interval(\eta)$ and $\hat{a}_{\langle s,1\rangle}\cong\interval(\omega+\eta)$.
\end{itemize}
In order to retrieve $s_{0}^{*}$, we maintain a primitive recursive list of generators $c$ within $B$ such that $\hat{c}\cong\interval(\omega+\eta)$. For any given isomorphism $h:A\to B$, the idea is for $h^{-1}(c)$ to map to some element within $A$ with index at least as large as $s_{0}^{*}$. However, since isomorphisms of Boolean algebras may `shuffle' the generators, some additional work is required to ensure that the strategy works.

In $B$, we have the following types of generators: $c_{n}$, $c_{n}^{*}$, $b_{\gamma}$ and $d_{\gamma}$ for each $n\in\omega$ and each $\gamma\in\omega^{<\omega}$. We will specify the relations between these generators shortly, but for now, we need only the intuition that $c_{i}$ will have the property that $\hat{c}_{i}\cong\interval(\omega+\eta)$. We return our focus to retrieving some index $\geq s_{0}^{*}$; for a given isomorphism $h:A\to B$, we would like to have the property that $h^{-1}(c_{0})$ contains the generator $a_{\langle s_{0}^{*},1\rangle}$ in its representation. To this end, we shall ensure that $\hat{c}_{0}\cong\interval(\omega+\eta)$ and $\neg\hat{c}_{0}$ contains more atoms than $\neg\hat{a}_{\langle s_{0}^{*},1\rangle}$. We illustrate this in Fig.~\ref{fig:intompeta}. Within $B$, whenever $P(0,s)$ fires, we grow the Boolean algebra below $b_{\gamma_{s'}}$ for each $s'\geq s$. Thus, it is evident that if $P(0,s)$ fires infinitely often ($s=s_{0}^{*}$), then $\hat{d}_{\gamma_{s}}\cong\interval(\eta)$, and $\hat{b}_{\gamma_{s'}}$ is finite for each $s'<s$.\footnote{The general description (see ``Controlling the number of atoms in $B$'') shall differ slightly from the simple setup here, although with similar intuition.} The details of how each $b_{\gamma}$ is grown in general may be found later.
\begin{figure}
    \centering
    \begin{tikzpicture}
        \node at (0,1) {$A$};
        \node at (0,0) {$\bullet$};
        \node at (1,-1) [label={[shift={(0.5,-0.3)}]$a_{\langle s-1,0\rangle}$}] {$\bullet$};
        \draw[-,dashed] (-1.5,-0.5) -- (0,0) -- (0,-1);
        \node at (-0.6,-0.6) {\tiny finite};
        \draw[-,dotted] (0,0) -- (1,-1);
        
        \foreach \x/\y in {1/0, 2.5/2, -0.5/1}
        {
        \node at (\x,-2) [label={[shift={(0.5,-0.5)}]$a_{\langle s,\y\rangle}$}] {$\bullet$};
        \draw[-] (1,-1) -- (\x,-2);
        }
        \draw[-,dashed] (-1.5,-2.5) -- (-0.5,-2) -- (-0.5,-3);
        \draw[-,dashed] (0,-3) -- (1,-2) -- (2,-3);
        \draw[-,dashed] (2.5,-3) -- (2.5,-2) -- (3.5,-2.5);

        \node at (-1.2,-2.7) {\tiny$\interval(\omega+\eta)$};
        \node at (1,-2.8) {\tiny$\interval(\eta)$};
        \node at (3,-2.7) {\tiny$\interval(\eta)$};

        \node at (6,1) {$B$};
        \node at (6,0) {$\bullet$};
        \foreach \x/\y in {0/0.4, 1/0.8}
        {
        \node at (6-\y,-\x-1) [label={[shift={(0.3,-0.3)}]$c_{\x}$}] {$\bullet$};
        }
        \draw[-] (6,0) -- (5.2,-2);
        %\draw[-,dashed] (4.4,-1) -- (5.6,-1) -- (4.9,-1.8);
        
        \draw[-,dashed] (4.2,-3) -- (5.2,-2) -- (6.2,-3);
        \node at (5.2,-2.8) {\tiny$\interval(\omega+\eta)$};
        \node at (5,-1.2) {$\bullet$};
        \draw[-] (5,-1.2) -- (5.6,-1);
        \draw[-,dashed] (4.7,-2) -- (5,-1.2) -- (4,-1.2);
        \node at (4.2,-1.5) {\tiny$\interval(\eta)$};
        \node at (4.2,-1.7) {\tiny$\vee$ finite};
        
        \foreach \x/\y/\a/\d in {1/0.5/0/c_{0}^{*}, 2/1/1/d_{\gamma_{0}}}
        {
        \node at (6+\x,-\y) [label={[shift={(0.4,-0.4)}]$\d$}] {$\bullet$};
        \node at (5.8+\x,-\y-0.5) [label={[shift={(0.4,-0.5)}]$b_{\gamma_{\a}}$}] {$\bullet$};
        \draw[-] (6+\x,-\y) -- (5.8+\x,-\y-0.5);
        \draw[-,dashed] (5+\x,-\y-0.9) -- (5.8+\x,-\y-0.5) -- (5.9+\x,-\y-1.2);
        \node at (5.4+\x,-\y-1) {\tiny finite};
        }
        \draw[-] (6,0) -- (8,-1);
        \draw[-,dotted] (8,-1) -- (9,-1.5);
        \node at (9,-1.5) [label={[shift={(0.5,-0.4)}]$d_{\gamma_{s}}$}] {$\bullet$};
        \draw[-,dashed] (8.5,-2.3) -- (9,-1.5) -- (9.7,-2.3);
        \node at (9.1,-2.1) {\tiny$\interval(\eta)$};
    \end{tikzpicture}
    \caption{$A$ and $B$ if $P(0,s)$ fires infinitely often.}
    \label{fig:intompeta}
\end{figure}
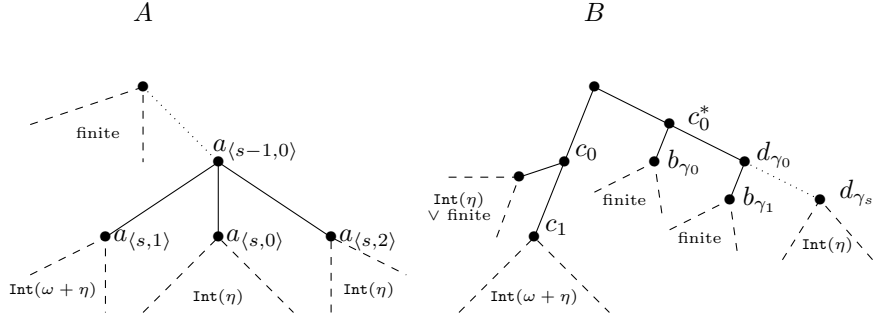
The strategy will be iterated in order to encode the various $s_{x}^{*}$. Before we describe this iterated strategy, we introduce a labelling system for the generators of $A$.

\

\emph{The labels for $A$:} The labels will be finite strings, $\sigma$, where the $x^{th}$ entry is a tuple $\langle s,i\rangle$ with the following properties.
\begin{itemize}
    \item $s\in\omega$ and $i\in\{0,1,2\}$.

    \item If $\sigma$ has final entry $\langle s,0\rangle$, then $a_{\sigma}=\bigvee_{i<3}a_{(\sigma\restriction |\sigma|-1)^{\frown}\langle s+1,i\rangle}$. The intuition here is that if $\sigma$ has final entry $\langle s,0\rangle$, this node is associated with the outcome where $P(|\sigma|,s)$ does not hold. Thus, we extend it by nodes which encode the predicate $P(|\sigma|,s+1)$.

    \item If $\sigma$ has final entry $\langle s,1\rangle$, then $a_{\sigma}=\bigvee_{i<3}a_{\sigma^{\frown}\langle 0,i\rangle}$. The intuition here is similar. If the final entry of $\sigma$ is $\langle s,1\rangle$, then $\sigma$ is associated with the outcome where $P(|\sigma|,s)$ holds, and thus, we extend it by nodes which begin the process to encode $P(|\sigma|+1,0)$.

    \item Observe that the labels are only extended if all its entries are of the form $\langle x,1\rangle$. In addition, if $\sigma$ is a label, then the first entry of $\sigma(x+1)$ is exactly one more than the first entry of $\sigma(x)$. Equivalently, only the final entry of a label $\sigma$ may itself possibly have a second entry that is either $0$ or $2$.
\end{itemize}
Evidently, the idea is that the $x^{th}$ entry of each label $\sigma$ encodes the outcome of $P(\sigma(x))$ which $\sigma$ `believes' in. More specifically, $\hat{a}_{\sigma}\cong\interval(\omega+\eta)$ iff for each $x<|\sigma|-1$, $\sigma(x)=\langle s_{x}^{*},1\rangle$, and $\sigma(|\sigma|-1)=\langle s,i\rangle$ for some $s<s_{x}^{*}$ and $i=0$, or $s=s_{x}^{*}$ and $i=1$ (this shall be formally verified later).

At the beginning of the construction, enumerate the top element $a_{\langle\rangle}$ into $A$. At each stage $n$ of the construction, $a_{\langle\rangle}$ will be given the instruction to \emph{grow}; enumerate the generators $a_{\langle n,i\rangle}$ for each $i<3$ (recall that $\bigvee_{i<3}a_{\langle 0,i\rangle}=a_{\langle\rangle}$). More generally, a generator labelled with $\sigma=\langle s_{0},i_{0}\rangle^{\frown}\langle s_{1},i_{1}\rangle^{\frown}\dots^{\frown}\langle s_{x},i_{x}\rangle$ grows at a stage $n$ iff the following hold. Suppose that $m$ is the most recent stage during which $a_{\sigma}$ grows. If after stage $m$, there are stages during which $a_{\sigma\restriction |\sigma|-1}$ grows, and $P(x,s_{x})$ fires (not necessarily at the same stage), then we grow $a_{\sigma}$. Depending on the final entry of $\sigma$, we grow the Boolean algebra below $a_{\sigma}$ differently.
\begin{itemize}
    \item If $i_{x}=0$, then enumerate all the generators $a_{\tau}$ below $a_{\sigma}$ up to a depth of $n$. Also ensure that for each $\tau$ where the final entry of $\tau$ itself has final entry $2$ is currently isomorphic to the standard presentation of $\interval(\eta)$ at stage $n$. In other words, if $a_{\sigma}$ grows infinitely often, then $\hat{a}_{\sigma}\cong\interval(\eta)$ and is finite otherwise.

    \item If $i_{x}=1$, then enumerate the generators $a_{\sigma^{\frown}\langle s,i\rangle}$ for each $s\leq n$ and each $i<3$. Intuitively, we only enumerate a `spine' below $a_{\sigma}$. Obviously, if $a_{\sigma}$ grows only finitely, then $\hat{a}_{\sigma}$ is finite. We claim that if $a_{\sigma}$ grows infinitely, then $\hat{a}_{\sigma}\cong\interval(\omega+\eta)$.

    \item If $i_{x}=2$, then split every atom currently below $a_{\sigma}$. Then $\hat{a}_{\sigma}\cong\interval(\eta)$ if $a_{\sigma}$ grows infinitely, and is finite otherwise. 
\end{itemize}
With this system of labels in place, given an isomorphism $h:A\to B$, the strategy to retrive $s_{x}^{*}$ is to first compute $h^{-1}(c_{i})$ for each $i\leq x$. The idea here is that each of these should map to some element within $A$ which is the join of generators, at least one of which has index $\geq s_{i}^{*}$. Recall that the way to ensure this holds is by keeping the number of atoms in $\neg c_{i}$ larger than the number of atoms in $\neg a_{\sigma}$ for any `small' $\sigma$.

\

\emph{Controlling the number of atoms in $B$:} For each $x\in\omega$, let $\gamma_{x,0},\gamma_{x,1},\dots$ denote the labels of length $x+1$ in the order in which the generators $a_{\gamma_{x,i}}$ are enumerated into $A$. (Note that if $a_{\sigma}$ is never enumerated into $A$, then $\sigma$ will not be listed by this procedure.) In $B$, the top element will be given by $d_{\gamma_{0,0}}\vee c_{0}$. For each $x,n\in\omega$, we have the following.
\begin{itemize}
    \item $c_{x}=c_{x+1}\vee c_{x+1}^{*}$.

    \item $c_{x}^{*}=d_{\gamma_{x,0}}\vee b_{\gamma_{x,0}}$.
    
    \item $d_{\gamma_{x,n}}=d_{\gamma_{x,n+1}}\vee b_{\gamma_{x,n+1}}$.
\end{itemize}
The intention here is to keep the number of atoms below $b_{\gamma_{x,n}}$ larger than the number of atoms below $a_{\gamma_{x,n}}$. By always splitting all current atoms below $b_{\gamma_{x,n}}$ whenever the number of atoms below $a_{\gamma_{x,n}}$ grow, we may ensure that $\hat{b}_{\gamma_{x,n}}$ is either finite and larger than $\hat{a}_{\gamma_{x,n}}$, or $\interval(\eta)$. In fact, we ensure that $\hat{b}_{\gamma_{x,n}}$ is finite iff $\hat{a}_{\gamma_{x,n}}$ is also finite. We shall verify later that for each $x$, there must be cofinitely many generators $b_{\gamma_{x,n}}$ such that the Boolean algebra below it is isomorphic to $\interval(\eta)$. It is also evident that $\neg\hat{c}_{x}$ will thus have more atoms than $\neg\hat{a}_{\sigma}$ for any $\sigma$ not containing any index $\geq s_{x}^{*}$. By computing $h^{-1}(c_{i})$ up to $i=x+1$, we may obtain an index at least as large as $s_{x}^{*}$.

\

\emph{Encoding $t_{x,s}^{*}$:} To give a summary of the strategy thus far, within $B$, we have `standard' copies of $\interval(\omega+\eta)$ given by $\hat{c}_{i}$, and we need to be able to control the number of atoms below $c_{\gamma_{i,0}}$ for each $i\in\omega$. Within $A$, we have `non-standard' copies of $\interval(\omega+\eta)$ with `large' indices to encode $s_{x}^{*}$. Unlike the strategy for the proof of Lemma \ref{lem:intomteta}, we do not seem to be able to place `standard' atoms in either of our structures $A$ or $B$ to encode $t_{x,s}^{*}$. To circumvent this issue, we shall adopt a strategy similar to the one utilised in the proof of Theorem \ref{thm:ompeta}.

Recall the assumption that $s_{x}^{*}$ is large enough such that for each $s<s_{x}^{*}$, $P(x,s)$ never again fires after stage $s_{x}^{*}$. In particular, the atoms below the various $a_{\langle s,2\rangle}$ for each $s<s_{0}^{*}$ never again changes after stage $s_{0}^{*}$, as $a_{\langle s,2\rangle}$ never again grows after stage $s_{0}^{*}$. Taking the convention that $P(0,s)$ each fires at least once, there should be at least $2^{s_{0}^{*}-1}$ many atoms below $a_{\langle s_{0}^{*}-1,2\rangle}$ by stage $s_{0}^{*}$. We may assume that this is larger than $\langle 0,s_{0}^{*}\rangle$, as there exists pairing functions which are polynomial in the inputs. By computing the given isomorphism on these $2^{s_{0}^{*}-1}$ many atoms, the hope is to retrieve a value $\geq t_{0,s_{0}^{*}}^{*}$. Similar ideas can be applied to obtain the indices of a `large' number (relative to $\langle x,s_{x}^{*}\rangle$) of atoms primitive recursively in $x$ and the given isomorphism.

The technique used in the proof of the previous lemma may now be applied. We maintain a list $L$ of the atoms in $B$ at each stage. Whenever, the approximation $g(x,s,t)\neq g(x,s,t-1)$, we split the $i^{th}$ atom in $L$ (ordered by their indices) for each $i\geq\langle x,s\rangle$, thereby ensuring that there are $<\langle x,s\rangle$ many atoms in $B$ with index less than the current stage number. As the number of atoms in $B$ below each generator $b_{\sigma}$ needs to be carefully controlled, when we split atoms in $B$ for the sake of encoding $t_{x,s}^{*}$, we do so without changing the total number of atoms currently in $B$. This can be achieved by splitting an atom into a fresh atom with a copy of $\interval(\eta)$ (note that this will not change the isomorphism type of $B$).

\

\emph{Construction:} At stage $n$ of the construction, do the following.
\begin{description}
    \item[Step 1] Enumerate the generator $a_{\langle n,i\rangle}$ for each $i<3$ into $A$ ($a_{\langle\rangle}$ grows at every stage). Similarly, enumerate the generators $c_{i}^{*}$ and $c_{i}$ into $B$ for each $i\leq n$. This ensures that both $A$ and $B$ are punctual.

    \item[Step 2] For each $x,s\in\omega$ such that $\langle x,s\rangle\leq n$, check if $P(x,s)$ fires. Let $m<n$ be the most recent stage during which $a_{\sigma}$ grows, where $\sigma(x)=\langle s,i\rangle$ for some $i<3$ (these generators all grow at the same stages). If there is some stage between $m$ and $n$ during which $a_{\sigma\restriction |\sigma|-1}$ grows, then grow $a_{\sigma}$. Otherwise do not grow $a_{\sigma}$ at this stage. This ensures that if $a_{\tau}$ grows infinitely often, then $a_{\gamma}$ for any prefix $\gamma$ of $\tau$ must also grow infinitely often.
    
    \item[Step 3] Within $B$, for each $x\leq n$, and each label $\sigma$ of length $x+1$ such that $a_{\sigma}$ is currently in $A$, enumerate the generator $b_{\sigma}$ into $B$. Also ensure that for each $b_{\sigma}$ currently in $B$, there are more atoms below $b_{\sigma}$ than the total number of atoms currently below $a_{\sigma}$. Whenever we grow the number of atoms below $b_{\sigma}$, always split the atom with the current least index below $b_{\sigma}$. In this way, if the number of atoms below $a_{\sigma}$ tends to infinity, $\hat{b}_{\sigma}\cong\interval(\eta)$ in the limit.

    \item[Step 4] Let $L=\{d_{0},d_{1},\dots,d_{N}\}$ be the list of all generators currently in $B$, arranged in order of their index, which are currently intended to be atoms. For each $x,s\in\omega$, if $g(x,s,n)\neq g(x,s,n-1)$, then split $d_{i}$ for each $i\geq\langle x,s\rangle$ into an atom with a fresh index and a copy of $\interval(\eta)$. Split the atoms in order of their index; the previous $\langle x,s\rangle$-th atom splits into the new $\langle x,s\rangle$-th atom and a copy of $\interval(\eta)$.
\end{description}

\

\emph{Verification:} First we show that for each $x$, the label $\sigma_{x}=\langle s_{0}^{*},1\rangle^{\frown}\langle s_{1}^{*},1\rangle^{\frown}\dots^{\frown}\langle s_{x-1}^{*},1\rangle$ is the unique label such that all of the following hold.
\begin{itemize}
    \item For any $i<3$, $a_{\sigma_{x}\!^{\frown}\langle s,i\rangle}$ grows infinitely often iff $s=s_{x}^{*}$.
    \item For each $s<s_{x}^{*}$ and each $i\in\{1,2\}$, $\hat{a}_{\sigma_{x}\!^{\frown}\langle s,i\rangle}$ is finite.
    \item For each $s>s_{x}^{*}$ and each $i<3$, $\hat{a}_{\sigma_{x}\!^{\frown}\langle s,i\rangle}\cong\interval(\eta)$.
\end{itemize}
For the base case $x=0$, since $s_{0}^{*}$ is the unique value for which $P(0,s_{0}^{*})$ fires infinitely often, and the generators $a_{\langle s,i\rangle}$ for each $s,i$ grows only when $P(0,s)$ fires, thus $a_{\langle s,i\rangle}$ grows infinitely often iff $s=s_{0}^{*}$. For each $s<s_{0}^{*}$, since $a_{\langle s,i\rangle}$ grows only finitely, by Step 2 of the construction, any generator below $a_{\langle s,1\rangle}$ also grows only finitely often, and therefore, $\hat{a}_{\langle s,1\rangle}$ is finite. Similarly, as new generators are enumerated below $a_{\langle s,2\rangle}$ only if $P(0,s')$ fires for some $s'\leq s$, then we may also conclude that $\hat{a}_{\langle s,2\rangle}$ is finite for each $s<s_{0}^{*}$. Finally, the generators $a_{\langle s,i\rangle}$ for any $s>s_{0}^{*}$ and any $i<3$ are all below $a_{\langle s_{0}^{*},0\rangle}$, which grows infinitely often, and thus, $\hat{a}_{\langle s,i\rangle}\cong\interval(\eta)$. The uniqueness of $\sigma_{0}=\langle\rangle$ is trivial as its the only string of length $0$.

Inductively suppose that the statement is true for all $y<x$. By the inductive hypothesis, $a_{\sigma_{x}}$ grows infinitely often, and thus, whether or not a generator below $a_{{\sigma_{x}}\!^{\frown}\langle s,i\rangle}$ grows depends only on the predicate $P(x,s)$. Just as before, since $s_{x}^{*}$ is the unique value for which $P(x,s_{x}^{*})$ fires infinitely, we again obtain that for each $i<3$, $a_{\sigma_{x}\!^{\frown}\langle s,i\rangle}$ grows infinitely often iff $s=s_{x}^{*}$. For the same reasons as before, $\hat{a}_{\sigma_{x}\!^{\frown}\langle s,1\rangle}$ and $\hat{a}_{\sigma_{x}\!^{\frown}\langle s,2\rangle}$ are finite for each $s<s_{x}^{*}$, and for each $s>s_{x}^{*}$, $\hat{a}_{\sigma_{x}\!^{\frown}\langle s,i\rangle}\cong\interval(\eta)$ for any $i<3$. It remains to check that $\sigma_{x}$ is unique.

Suppose that $\sigma\neq\sigma_{x}$ also has the desired properties. Let $y<x$ be the least such that $\sigma_{x}(y)\neq\sigma(y)$. In order for $\sigma^{\frown}\langle s_{x}^{*},1\rangle$ to be a label, $|\sigma|=x$ and $\sigma(y)=\langle s_{y},1\rangle$ for each $y<x$ where $s_{y}\in\omega$. By the definition of $\sigma_{x}$, we obtain that $s_{y}\neq s_{y}^{*}$, otherwise, $\sigma(y)=\sigma_{x}(y)$. In addition, $\sigma\restriction y+1=(\sigma_{x}\restriction y)^{\frown}\langle s_{y},1\rangle=\sigma_{y}\!^{\frown}\langle s_{y},1\rangle$. Applying the inductive hypothesis, $a_{\sigma\restriction(y+1)}=a_{\sigma_{y}\!^{\frown}\langle s_{y},1\rangle}$ grows only finitely often. This further implies that the generators $a_{\tau}$ for any $\tau$ extending $\sigma\restriction y+1$ also grows only finitely often (see Step 2 of the construction). Thus, it cannot be that $a_{\sigma^{\frown}\langle s_{x}^{*},1\rangle}$ grows infinitely often. Therefore, $\sigma_{x}$ must be unique.

Applying the above allows us to obtain that for each $x\in\omega$, and each $s<s_{x}^{*}$, the generator $a_{\sigma_{x}^{\frown}\langle s,0\rangle}$ splits into
\begin{itemize}
    \item $a_{\sigma_{x}\!^{\frown}\langle s+1,1\rangle}\vee a_{\sigma_{x}\!^{\frown}\langle s+1,2\rangle}$ which is finite, and $a_{\sigma_{x}\!^{\frown}\langle s+1,0\rangle}$ if $s\neq s_{x}^{*}-1$, or
    \item $a_{\sigma_{x}\!^{\frown}\langle s_{x}^{*},0\rangle}\cong a_{\sigma_{x}\!^{\frown}\langle s_{x}^{*},2\rangle}$ which is isomorphic to $\interval(\eta)$, and $a_{\sigma_{x}\!^{\frown}\langle s_{x}^{*},1\rangle}$ if $s=s_{x}^{*}-1$.
\end{itemize}
Thus, $A\cong\interval(\omega+\eta)$, where $\hat{a}_{\sigma}\cong\interval(\omega+\eta)$ iff $\sigma$ is such that the $x^{th}$ entry is $\langle s_{x}^{*},1\rangle$ except possibly the final entry which may be $\langle s,0\rangle$ for some $s<s_{|\sigma|-1}^{*}$.

Now we claim that for each $x\in\omega$, and for each label $\sigma$ of length $x+1$, $\hat{b}_{\sigma}$ is either $\interval(\eta)$ or is finite and contains more atoms than $\hat{a}_{\sigma}$ (recall Notation \ref{notat:hat}). Evidently, the atoms in $B$ are affected only by Steps 3 or 4 of the construction. Let $\sigma$ of length $x+1$ be given, and consider the Boolean algebra $\hat{b}_{\sigma}$. If $\hat{a}_{\sigma}\cong\interval(\omega+\eta)$ or $\hat{a}_{\sigma}\cong\interval(\eta)$, then by Step 3 of the construction, $\hat{b}_{\sigma}\cong\interval(\eta)$ as every atom below $b_{\sigma}$ is eventually split. We may thus suppose that $\hat{a}_{\sigma}$ is finite as it is the only remaining possibility for the isomorphism type of $\hat{a}_{\sigma}$. After the stage at which the number of atoms below $\hat{a}_{\sigma}$ stabilises, the number of atoms below $\hat{b}_{\sigma}$ can no longer change due to Step 3. This is however not sufficient to show that there are atoms below $b_{\sigma}$ at all in the limit; if newly introduced atoms are always split, then $\hat{b}_{\sigma}\cong\interval(\eta)$. We now show also that Step 4 of the construction only splits the atoms below $b_{\sigma}$ finitely many times, hence ensuring that $\hat{b}_{\sigma}$ contains more atoms than $\hat{a}_{\sigma}$ in the limit.

In Step 4 of the construction, an atom with index $i$ in $B$ is split at stage $n$ only when $g(x,s,n)\neq g(x,s,n-1)$ for some $x,s$ where $\langle x,s\rangle\leq i$. Let $L[n]$ denote the list $L$ during stage $n$. Also let $l_{i}[n]$ denote the $i^{th}$ element of $L[n]$. Since $g(x,s,n)$ eventually stabilises, we also obtain that $l_{i}[n]$ eventually stabilises. It remains to prove that the position of these atoms also eventually stabilise. More specifically, let $e_{0}[n],e_{1}[n],\dots,e_{m}[n]$ denote the atoms currently below $b_{\sigma}$ at stage $n$ after the number of atoms below it has stabilised. We show that for each $i$, there exists $j$ and $n^{*}$ such that for all $n'\geq n^{*},\,e_{i}[n']=l_{j}[n']$. Applying this with the fact that each $l_{j}$ stabilises, we obtain that the indices of atoms below $b_{\sigma}$ also stabilises. Let $i$ be given. Evidently, $e_{i}[n]=l_{j}[n]$ for some $j$ since $e_{i}[n]$ is currently an atom below $b_{\sigma}$. At each subsequent stage $n'>n$, one of the following may happen.
\begin{itemize}
    \item $l_{k}[n'-1]$ was split due to Step 3 of the construction for some $k\neq j$. Clearly, if $k>j$, then $l_{j}[n']=l_{j}[n'-1]$, as newly enumerated atoms have fresh indices. Thus, $e_{i}[n']=e_{i}[n'-1]=l_{j}[n'-1]=l_{j}[n']$. On the other hand, if $k<j$, then $l_{j}[n'-1]=l_{j-1}[n']$, as the atom that used to have the $k^{th}$ smallest index has been split into two new atoms with large indices. We then obtain that $e_{i}[n']=l_{j-1}[n']$. 

    \item There is some $k$ such that $l_{k}[n'-1]$ was split due to Step 4 of the construction. Recall that when splitting atoms in Step 4, the total number of atoms within $B$ do not change; each atom is split into a single new atom and a copy of $\interval(\eta)$. The temporary atoms in the latter are ignored as the construction will turn this into the atomless Boolean algebra in the limit. In addition, since the atoms are split according to their position in $L$, even though $l_{k}[n']\neq l_{k}[n'-1]$, the fresh atom below $l_{k}[n'-1]$ still possesses the $k^{th}$ smallest index in $B$. In particular, if $e_{i}[n'-1]=l_{j}[n'-1]$ and $l_{j}[n'-1]$ was split at stage $n'$ by Step 4 of the construction, $e_{i}[n']=l_{j}[n']$.
\end{itemize}
Thus, at each stage $n'>n$, $e_{i}[n']=l_{j}[n']$ or $e_{i}[n']=l_{j-1}[n']$. Therefore, there must exist $n^{*}$ and $j$ such that $e_{i}[n']=l_{j}[n']$ for all $n'\geq n^{*}$, and since Step 4 of the construction eventually stops splitting $l_{j}$ for each $j$, the indices of atoms below each $b_{\sigma}$ eventually stabilises.

Finally, we show that $B\cong\interval(\omega+\eta)$. For each $x\in\omega$, recall that there are only finitely many labels $\sigma$ of length $x+1$ such that $\hat{a}_{\sigma}$ is finite; these are exactly the labels with final entry $\langle s,i\rangle$ for some $s<s_{x}^{*}$ and $i=1$ or $2$. These finitely many labels are also exactly the labels for which $\hat{b}_{\sigma}$ is finite. This implies that for each $i\in\omega$, $c_{i-1}$ (let $c_{-1}$ be the top element of $B$) splits into $c_{i}$, a copy of $\interval(\eta)$ and some finite Boolean algebra. Furthermore, by Step 3 of the construction, we also obtain that the total number of atoms below $c_{i}^{*}$ is finite and larger than the number of atoms below $a_{\langle s_{0}^{*},1\rangle,\langle s_{1}^{*},1\rangle,\dots,\langle s_{i-1}^{*},1\rangle}\wedge\neg a_{\langle s_{0}^{*},1\rangle,\langle s_{1}^{*},1\rangle,\dots,\langle s_{i}^{*},1\rangle}$.

\

\emph{Computing $g^{*}(x)$:} Given an isomorphism $h:A\to B$ and $x\in\omega$, compute $h^{-1}(c_{i})$ for each $i\leq x$. At least one of these images should consist of a generator with label $\sigma$ having prefix $\langle s_{0}^{*},1\rangle^{\frown}\dots^{\frown}\langle s_{x}^{*},1\rangle$. We proceed via induction. If $h^{-1}(c_{0})$ does not contain $a_{\langle s_{0}^{*},1\rangle}$, then $h^{-1}(c_{0}^{*})$ must be contained in $\neg a_{\langle s_{0}^{*},1\rangle}$. However, by the construction, there are more atoms below $c_{0}^{*}$ than there are in $\neg\hat{a}_{\langle s_{0}^{*},1\rangle}$. Inductively suppose that $h^{-1}(c_{i})$ contains $a_{\langle s_{0}^{*},1\rangle,\dots,\langle s_{i}^{*},1\rangle}$. Since there are more atoms below $c_{i+1}^{*}$ than there are below $a_{\langle s_{0}^{*},1\rangle,\langle s_{1}^{*},1\rangle,\dots,\langle s_{i}^{*},1\rangle}\wedge\neg a_{\langle s_{0}^{*},1\rangle,\langle s_{1}^{*},1\rangle,\dots,\langle s_{i+1}^{*},1\rangle}$, it follows that $h^{-1}(c_{i+1})$ must contain $a_{\langle s_{0}^{*},1\rangle,\dots,\langle s_{i+1}^{*},1\rangle}$. Thus, one of $h^{-1}(c_{i})$ for $i\leq x$ contains a label with index $s\geq s_{x}^{*}$. For such an index $s$, by running the enumeration of $A$ up to stage $s$, we may then obtain the indices of at least $2^{s_{x}^{*}-1}$ many atoms. We note here that not all generators which appear to be atoms at stage $s$ may remain as atoms in the limit, however, it suffices that at least $2^{s_{x}^{*}-1}$ many of them will actually be atoms. We may further assume that $x$ is the largest for which $s_{x}^{*}\leq s$, otherwise, we would accordingly obtain the indices of at least $2^{s_{y}^{*}-1}$ many atoms in $A$ where $y$ is the largest such that $s_{y}^{*}\leq s$. The property that we require is that the number of (guaranteed) atoms is larger than $\langle x,s\rangle$. Given the indices of these atoms, compute $h$ on all of them. By pigeonhole principle, at least one of these should map to an atom within $B$ with index $t\geq t_{x,s}^{*}$ (the index of the $\langle x,s\rangle$-th smallest atom in $B$). Using these values of $s$ and $t$, we may then compute $g^{*}(x)$ by computing $g(x,s,t)$. Lemma~\ref{lem:intompeta} is proved.
\end{proof}

Theorem~\ref{thm:d3ba} is proved.

\section{Trees as Partial Orders}\label{sec:trees}

\begin{theorem}\label{thm:treepres}
    Every computable tree with at least one node having infinitely many successors has a punctual presentation.
\end{theorem}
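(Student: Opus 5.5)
We build a punctual copy $P$ of a given computable tree $T$ (a partial order with least element, the root, in which the predecessors of each node form a finite chain) by copying $T$ with delay and using the infinitely-branching node as a reservoir of filler elements. Non-uniformly fix a node $r\in T$ with infinitely many immediate successors, together with its finite chain of predecessors. Put this chain into $P$ at stage $0$, with $\rho\in P$ the copy of $r$. The plan is to produce a primitive recursive list $\rho_0,\rho_1,\dots$ of immediate successors of $\rho$ in $P$, where each $\rho_i$ is either a temporary leaf or the image of some node of $T$, arranged so that in the limit $P\cong T$.

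At stage $s$ we run the computable enumeration of $T$ (its domain and order relation) for $s$ steps. Let $T_s$ be the finite portion on which all order relations among listed elements have converged and which is closed downward; this is available because $T$ is computable and a tree. We extend a partial embedding $f_s\colon T_s\to P$ that is consistent with $f_{s-1}$.

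When a new node $v\in T_s$ appears, we place it using its already placed parent $u$. If $u\neq r$, we add a fresh element of $P$ above $f(u)$. If $u=r$, we assign $v$ the least unused reserved child $\rho_i$, provided one has already been enumerated; otherwise we add a fresh one. In every stage we also enumerate one fresh element $\rho_{s}$ as a new immediate successor of $\rho$ and leave it unassigned for the moment. This guarantees that each stage adds an element, so the domain of $P$ is $\omega$, and the order on $P$ is primitive recursive: each new element is introduced together with its entire predecessor chain, and the relation among existing elements never changes.

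The key point is that the unassigned reserved children $\rho_i$ must end up matched to genuine children of $r$, or else $P$ would contain infinitely many extra leaves above $\rho$. We make the assignment of $T$'s new children of $r$ always take the \emph{least} unassigned $\rho_i$. Since $r$ has infinitely many children in $T$, every $\rho_i$ is eventually assigned. Conversely, every node of $T$ eventually gets an image, and that image is fixed once chosen. A node placed above some $\rho_i$ receives its copy only after $\rho_i$ has been assigned, because nodes are processed parent-first. Hence $f=\lim f_s$ is a bijection that preserves the order, and $P\cong T$.

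The main obstacle is making sure that a temporarily leaf-like $\rho_i$ never acquires structure before it is assigned, and that assigning it later is consistent. Since $\rho_i$ gets no elements above it until it is matched, matching it to any child $v$ of $r$ and then copying the subtree above $v$ causes no conflict. The remaining check is that the whole procedure is primitive recursive. Each stage does a bounded amount of work determined by $s$ steps of the computation of $T$, and no unbounded search occurs, because when no new child of $r$ has appeared we simply do nothing beyond adding the filler $\rho_s$.
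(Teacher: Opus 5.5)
Your padding strategy (fresh successors of the infinitely branching node, handed out least-first to new children of $r$) is the paper's. However, there is a genuine gap in the assertion that a downward closed $T_s$ is available, i.e.\ that every node can be processed after its parent.

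For trees as partial orders, which is the setting of the theorem and the one you adopt, the predecessor chain of a node is finite but not computable. For example, let each $d_n$ lie above $r$, and enumerate a fresh $c_n$ with $r<c_n<d_n$ exactly when $\varphi_n(n)$ halts. Whether $d_n$ has a parent other than $r$ is then the halting problem, so you cannot wait for parents to appear. Your rule then fails:
\begin{enumerate}
\item $d_n$ first looks like a childless child of $r$ and receives some filler $\rho_i$.
\item When $c_n$ appears, its placed parent is $r$, so it receives another filler $\rho_j$.
\item $\rho_j$ is incomparable with $\rho_i=f(d_n)$, although $c_n<d_n$.
\end{enumerate}
Since relations among existing elements of $P$ can never be revised, $f$ is not order preserving.

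What is missing is the case distinction in the paper's proof. Give a filler only to a node that currently looks like a successor of $r$ \emph{and} has no already enumerated node above it. Every other new node gets a fresh element of $P$, with relations to already placed elements copied from $T$, and declared incomparable with all still unassigned fillers. The paper also handles nodes appearing below $t^*$; your non-uniform fixing of the predecessors of $r$ legitimately removes that case. The scheme is consistent because a node that receives a filler is, at that moment, incomparable with every previously enumerated node above $r$.

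You must also redo the argument that every filler is eventually used, since a child of $r$ no longer necessarily takes one. The first enumerated element of the cone above each child of $r$ does meet the filler condition. Hence infinitely many nodes take fillers, and since the least unused filler is always taken, every $\rho_i$ enters the range of $f$.
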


\begin{proof}
Let $T$ be a computable tree. Non-uniformly fix a node $t^{*}$ with infinitely many successors. During the construction, we build both the punctual copy $A$ and maintain a computable isomorphism $f$ from $T$ to $A$. The main challenge in producing a punctual copy is that we need to be able to enumerate elements `quickly'. Thus, we must have some part of the structure dedicated to `padding', while waiting for the `slow' computable copy to reveal its structure. To this end, we use the node with infinitely many successors as our `padding' part. In $A$, let $a^{*}$ be a node with infinitely many successors. Then we define $f(t^{*})=a^{*}$. At each stage $s$, we enumerate a new node $a_{s}$ and declare $a^{*}<a_{s}$. The strategy now is to ensure that $f(T[s])$ is always a substructure of $A[s]$ and $A[s]\setminus(f(T[s]))$ consists only of nodes which are currently successors of $a^{*}$.

Suppose inductively that the above holds for all stages $<s$. If no new element is enumerated into $T$ at stage $s$, then simply enumerate $a_{s}$ into $A$ and proceed to the next stage. It is easy to see that if this happens, then the inductive hypothesis still holds. We may thus suppose that some element $t$ is enumerated into $T$ at stage $s$. By convention, we further assume that at most one such $t$ may enter $T$ at each stage. Consider the cases as follows.
\begin{itemize}
    \item If $t$ is currently a successor of $t^{*}$ and there does not exist any $t'\in T[s-1]$ for which $t^{*}<t<t'$, then define $f(t)=a_{i}$ for the least $i$ such that $a_{i}$ is not yet in the range of $f$. If such an element does not exist, simply enumerate a fresh $a_{i}$ and define it to be the $f$ image for $t$.

    \item If $t$ is currently a successor of $t^{*}$ and there exists some $t'\in T[s-1]$ for which $t^{*}<t<t'$, then enumerate a new element $a$ into $A$, defining $f(t)=a$. Furthermore, for any $a'$ currently in the range of $f$, the relation between $a,a'$ may be inherited directly from the relation between $t$ and $f^{-1}(a')$. For any other $a'\in A[s-1]\setminus(f(T[s-1]))$, define $a$ and $a'$ to be incomparable. This ensures that the relation on $A$ is primitive recursive.

    \item If $t$ is currently not a successor of $t^{*}$, then enumerate a new element $a$ into $A$, defining $f(t)=a$. As before, for any $a'\in f(T[s-1])$, the relation between $a,a'$ is inherited from the relation between $t,f^{-1}(a')$. For any $a'\in A[s-1]\setminus(f(T[s-1]))$, if $t<t^{*}$, then define $a<a'$. Otherwise, define $a$ to be incomparable with $a'$.
\end{itemize}
From the definition of $f$, it is clear that it is injective. It remains to argue that $f$ is surjective and order preserving. Let $a\in A$ be given. From the construction, a node $a$ is enumerated only for one of two reasons. First, $a=a_{i}$ for some $i$, and second, $a$ was enumerated to serve as the $f$ image for some $t$. In the latter case, it is obvious that $a$ is in the range of $f$. On the other hand, for $a=a_{i}$, by the stage at which $t^{*}$ has at least $i+1$ distinct successors, $a_{i}$ must have entered the range of $f$. Therefore, $f:T\to A$ is a bijection. It is also easy to see from the construction above that $f$ is order preserving, and thus, we obtain that $A\cong T$.
\end{proof}

We note that there exists a computable tree (viewed as a partial order) that does not have punctual copies (Theorem~8 in~\cite{BKW-ta}).

\begin{theorem}\label{thm:d1tree}
    Let $T$ be a punctual tree of finite height such that one of the following holds.
    \begin{itemize}
        \item $T$ has at least two nodes each with infinitely many successors.
        \item $T$ has exactly one node with infinitely many successors, each of which possessing its own successor.
    \end{itemize}
    Then $\prcat(T)\subseteq\cone(\Delta_{1}^{0})$.
\end{theorem}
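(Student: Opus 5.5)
Given a total computable function $g$, we build two punctual copies $A,B\cong T$ such that $g\leq_{PR}h\oplus h^{-1}$ for every isomorphism $h:A\to B$; this gives $\prcat(T)\subseteq\cone(\Delta_{1}^{0})$. As in Theorem~\ref{theo:Delta_1-cat-eq-str} and Theorem~\ref{thm:d1lin}, we encode halting times. $A$ is a fast copy, built as in Theorem~\ref{thm:treepres}: the infinitely branching nodes are used for padding, and some distinguished nodes are enumerated in a primitive recursive way. $B$ is a delayed copy in which the $x$-th distinguished node, or its witnessing structure, is enumerated only after $g(y)[s]\downarrow$ for all $y\le x$. Since $T$ has finite height, we may non-uniformly fix all nodes of depth below the relevant infinitely branching node(s), together with the finitely many isomorphism types needed. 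Hence only the part of the tree above one or two infinitely branching nodes must be built dynamically.

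\emph{Case (i): two nodes $u\neq v$ with infinitely many successors.} The idea mirrors Type (iii) of Theorem~\ref{theo:Delta_1-cat-eq-str}. In $A$, both $u$ and $v$ receive new successor subtrees at every stage. In $B$, only one of them, say $u$, is used as padding. The successors of $v$ (with the subtrees above them, copied from a computable copy of $T$ as in Theorem~\ref{thm:treepres}) are enumerated so that the $x$-th one appears only after $g$ has converged on all $y\le x$. There is a subtlety when some automorphism of $T$ interchanges the roles of $u$ and $v$, or when $u,v$ are comparable. To handle this, I would compute $h$ on the first $x+1$ successors of \emph{both} $h^{-1}(u)$ and $h^{-1}(v)$ in $A$; these are found primitive recursively, since $A$ is standard. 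By pigeonhole, at least one of the images lands among the delayed successors of $v$ in $B$, so some image has index at least the halting stage of $g(x)$. If $u<v$, padding below $v$ must still be avoided. I would therefore choose the padding node to be one that is not below the other, or use $v$ as padding, whichever keeps $B$ punctual.

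\emph{Case (ii): exactly one node $u$ with infinitely many successors, each having its own successor.} Now padding in $B$ must be done at $u$, which is also where we encode. The trick, analogous to Type (ii) of Theorem~\ref{theo:Delta_1-cat-eq-str}, is to pad with successors of $u$ left temporarily as leaves. A successor $c$ of $u$ in $T$ is never a leaf, and all isomorphism types above successors of $u$ are finite-height and determined up to the finitely many non-padding cases. So in $B$ we add a fresh leaf successor $c_s$ of $u$ at each stage. Only once $g(y)[s]\downarrow$ for all $y<x$ do we grow the first $x$ of these leaves into proper copies of the required subtrees, following a computable copy of $T$ and an auxiliary isomorphism $p:B\to T$ built as in Lemma~\ref{lem:d2boundchar}. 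Every element of $B$ strictly above the $x$-th successor of $u$ then has index at least the convergence stage of $g(x-1)$. To recover $g(x)$, take the first $x+1$ successors of $h^{-1}(u)$ in $A$, together with one successor of each; these are given primitive recursively in the standard copy $A$. Apply $h$ to these second-level nodes. Their images lie above $x+1$ distinct successors of $u$ in $B$, so one of them has a large index, and then run $g(x)$ for that many steps.

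The main obstacle I expect is ensuring that $B\cong T$ in Case (ii). Successor subtrees of $u$ may be of infinitely many different (finite-height) isomorphism types, each appearing with varying multiplicities. Delaying the growth of leaves means we must match every subtree type of $T$ above $u$ exactly once via $p$, as in the matching argument of Lemma~\ref{lem:d2boundchar}. Here that is easier, because each delayed leaf is eventually grown. My plan is to simply assign the $x$-th delayed leaf to the $x$-th successor of $u$ in the computable copy, copying its subtree once permission is given. The remaining issue is bookkeeping: this copying never needs unbounded search, since new elements are only added after a permission stage, and the padding leaves keep the enumeration punctual.
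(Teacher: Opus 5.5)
\textbf{Case (i) has a genuine gap.} In $B$ you delay only the successors of the one chosen node $v$, and the only automorphism issue you address is a swap of $u$ and $v$. Your recovery step therefore tacitly needs $h^{-1}(v_B)$ to be one of the two padded nodes $u_A,v_A$ of $A$. That fails as soon as the automorphism orbit of $v$ is not contained in $\{u,v\}$.

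For instance, let $T$ be a root with exactly three children, each having infinitely many leaf successors. Let $u,v$ be two of the children and $w$ the third, and take an isomorphism with $h(u_A)=u_B$ and $h(v_A)=w_B$.
\begin{itemize}
\item All fast successors of $u_A$ and $v_A$ go to successors of the padding node $u_B$ or of $w_B$. Neither node is delayed, so no computed image need have large index.
\item The alternative, taking successors of $h^{-1}(v_B)=w_A$, is not primitive recursive. $w_A$ is not padded in $A$, so its successors appear only as fast as the given punctual $T$ reveals them.
\end{itemize}
So both ``these are found primitive recursively, since $A$ is standard'' and ``by pigeonhole, at least one of the images lands among the delayed successors of $v$'' are unjustified. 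Relatedly, $T$ may have infinitely many infinitely branching nodes, so the reduction to ``the part above one or two infinitely branching nodes'' is not available in Case (i).

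The missing idea, which is the paper's, is to make $B$ slow everywhere except at a single padding node $b^*$. No node $b\neq b^*$ is allowed more than $x$ incomparable branches (other than those through $b^*$) until $g(y)\downarrow$ for all $y\le x$. You then apply $h$ to a few more than $x$ fast successors of each of the two padded nodes $a^*,a^{**}$ of $A$ themselves. Since $h$ is injective, $h(a^*)$ and $h(a^{**})$ cannot both equal $b^*$. So one batch of images is forced into a slow part of $B$, and some image has index past the halting stage of $g(x)$.

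\textbf{Case (ii)} is essentially the paper's argument: padded pairs $a^*<a_{s,0}<a_{s,1}$ in $A$, and fresh leaf successors of $b^*$ in $B$ whose upward growth waits for $g$. One detail differs. The paper also delays any node inserted between $b^*$ and a padding leaf, and applies $h$ to \emph{both} $a_{2y,0}$ and $a_{2y,1}$. Suppose a node of $T$ is later inserted below the successor matched to a padding leaf $c_z$. Then $c_z$ itself, with its small index, sits at height two above $b^*$. Applying $h$ only to the second-level nodes is therefore safe only if you arrange (e.g.\ by re-matching) that padding leaves stay successors of $u_B$.
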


\begin{proof}
Let $T$ be a computable tree that is not punctually categorical. In building punctual presentations $A,B\cong T$, we adopt the strategy as described in the proof of Theorem \ref{thm:treepres}. Recall that all that is necessary is for successors to be enumerated quickly for a node with infinitely many successors. In constructing $A,B$, we shall ensure that such a property holds. Let $g$ be a total computable function. Recall that it suffices to encode the stage at which $g(x)\downarrow$ into any isomorphism $h:A\to B$ to show that $\prcat(T)\subseteq\cone(\Delta_{1}^{0})$.

\

\emph{Constructing $A,B$ with (at least) two nodes having infinitely many successors:} Fix two nodes $a^{*}$ and $a^{**}$ in $A$, to be nodes with infinitely many successors. As described previously, at each stage of the construction, both $a^{*}$ and $a^{**}$ will receive fresh successors. Notice that because of the way we ensure $A\cong T$ (see Theorem \ref{thm:treepres}), the nodes enumerated as successors of $a^{*}$ or $a^{**}$ need not remain as successors. However, we at least have that these elements all remain incomparable. Furthermore, for any $x$, it is primitive recursive to obtain the indices of $x$ many incomparable descendants for both $a^{*}$ and $a^{**}$.

Within $B$, fix one node $b^{*}$ to be a node with infinitely many successors. To keep $B$ punctual, at each stage of the construction, enumerate a fresh successor for $b^{*}$. Obviously, the intention here is to use the descendants of all other nodes with infinitely many successors to encode the stages at which $g\downarrow$. More specifically, for every other node $b\neq b^{*}$, we ensure that $b$ has at most $x$ many incomparable descendants as long as $g(x)\uparrow$. Since $g$ is assumed to be total, this delay is only finite, and is easily seen to be compatible with the strategy to ensure $B\cong T$ (as in Theorem \ref{thm:treepres}). In particular, for any other node $b\neq b^{*}$ with infinitely many successors, the sub-tree above $b$ has width $x$ only after $g(y)\downarrow$ for all $y\leq x$.

Given any isomorphism $h:A\to B$, and $x\in\omega$, we may recover $g(x)$ with the following procedure. Compute $h$ on $x$ many incomparable descendants of $a^{*}$ and $a^{**}$ respectively. Since $h$ is an isomorphism, either $h(a^{*})\neq b^{*}$ or $h(a^{**})\neq b^{*}$. Since the subtree above $b\in B$ for any $b\neq b^{*}$ grows to width $x$ only after $g(y)\downarrow$ for all $y\leq x$, by pigeonhole principle, at least one of the $x$ many incomparable descendants of $a^{*}$ or $a^{**}$ must map to a node enumerated into $B$ after $g(x)\downarrow$. By convention, such a node in $B$ has index at least as large as the stage at which $g(x)\downarrow$, thus allowing us to recover $g(x)$.

\

\emph{Constructing $A,B$ with exactly one node having infinitely many successors, infinitely many of which possessing its own successor:} Fix the nodes $a^{*},b^{*}$ as the unique nodes with infinitely many successors in $A$ and $B$ respectively. At every even stage $s$ of the construction, enumerate nodes $a_{s,0},a_{s,1}$ such that $a^{*}\leq a_{s,0}\leq a_{s,1}$, and at every odd stage of the construction, enumerate a single new successor for $a^{*}$. The idea here is that we `quickly' produce the successors of $a^{*}$ with their own successors, while using the successors enumerated during odd stages to keep $A\cong T$. Since the assumption is that there are infinitely many successors of $a^{*}$ with their own successors, this enumeration of the sub-tree above $a^{*}$ is still compatible with the strategy in keeping $A\cong T$ as described in Theorem \ref{thm:treepres}.

In $B$, we enumerate a new successor for $b^{*}$ at each stage to keep $B$ punctual. The idea here is to delay growing the height of each sub-tree rooted at a successor of $b^{*}$ until $g\downarrow$. Let $b_{x}$ denote the $x^{th}$ node enumerated as a temporary successor of $b^{*}$. The strategy to ensure $B\cong T$ might dictate that we enumerate some new element $b$ such that either $b^{*}\leq b\leq b_{x}$ or $b_{x}\leq b$. Let such a $b$ be the very first element requested to be enumerated. We shall delay such an enumeration until $g(y)\downarrow$ for all $y\leq x$. In particular, for any $x\in\omega$, if $g(x)$ has yet to halt, then $b_{x}$ is temporarily a successor of $b^{*}$, and $b_{x}$ has no descendants. Just as before, since $g$ is total, all such delay for the sake of encoding $g$ is only finite and is thus consistent with the strategy in keeping $B\cong T$.

Given an isomorphism $h:A\to B$ and $x\in\omega$, compute $h(a_{2y,0})$ and $h(a_{2y,1})$ for each $y\leq x$. Since $h$ is an isomorphism, and $a^{*}\leq a_{2y,0}\leq a_{2y,1}$, it must be that $b^{*}\leq h(a_{2y,0})\leq h(a_{2y,1})$ (note that $h(a^{*})=b^{*}$). In addition, since $y\neq y'$ implies that $a_{2y,i}$ and $a_{2y',j}$ are incomparable for any $i,j\in\{0,1\}$, the image of $h(a_{2y,0})$ and $h(a_{2y,1})$ must map to $x+1$ many incomparable `branches' above $b^{*}$. By construction of $B$, at least one of $h(a_{2y,0})$ and $h(a_{2y,1})$ has to map to some element enumerated after $g(x)\downarrow$, as no nodes are ever enumerated between $b^{*}$ and $b_{z}$, or as descendants of $b_{z}$ for all $z\geq x$, until after $g(x)\downarrow$. Theorem~\ref{thm:d1tree} is proved.
\end{proof}

Applying the characterisation of computably categorical trees in \cite{lmms05,Miller-05} together with Theorem \ref{thm:d1tree} gives the following.

\begin{corollary}
    Let $T$ be a computably categorical tree that is not punctually categorical, then $\prcat(T)=\cone(\Delta_{1}^{0})$.
\end{corollary}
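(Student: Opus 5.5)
The corollary follows from Fact~\ref{fact:conesubspec}, Theorem~\ref{thm:d1tree}, and the classification of computably categorical trees in \cite{lmms05,Miller-05}. For the inclusion $\cone(\Delta_{1}^{0})\subseteq\prcat(T)$, note that $T$ is computably categorical, i.e.\ $\Delta_{1}^{0}$-categorical. Since every punctual copy is in particular computable, between any two punctual copies there is a computable isomorphism $f$; its inverse is also computable. Both $f$ and $f^{-1}$ are total $\Delta_{1}^{0}$ functions, so any $\mathbf{d}\in\cone(\Delta_{1}^{0})$ bounds them. This is exactly Fact~\ref{fact:conesubspec}.

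For the reverse inclusion, I would show that every computably categorical tree that is not punctually categorical satisfies one of the two hypotheses of Theorem~\ref{thm:d1tree}, and then invoke that theorem. By \cite{lmms05,Miller-05}, computably categorical trees (as partial orders) have finite height. Moreover, they are built so that for each node, among its successors only finitely many isomorphism types of subtrees occur infinitely often, in a controlled way. I would split into cases by the number of nodes having infinitely many successors.
\begin{enumerate}
    \item If at least two nodes have infinitely many successors, the first bullet of Theorem~\ref{thm:d1tree} applies directly.
    \item If no node has infinitely many successors, then $T$ is finitely branching of finite height, hence finite. A finite tree is punctually categorical, contradicting the hypothesis.
    \item If exactly one node $t^{*}$ has infinitely many successors, split further by how many of those successors have successors of their own.
    \begin{enumerate}
        \item If infinitely many do, we are in the setting of the second case in the proof of Theorem~\ref{thm:d1tree}, which in fact only uses that infinitely many successors of $t^{*}$ have their own successor.
        \item If only finitely many do, then all other nodes are finitely branching and of bounded height. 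So $T$ is a finite tree plus infinitely many leaves attached at $t^{*}$. I would argue this is punctually categorical, by the same kind of argument as the equivalence-structure case $F\cup E$ of Theorem~\ref{thm:punccat}. Fix the finite part non-uniformly. In any punctual copy, every element is either in the finite part or a leaf above the image of $t^{*}$, decidably. Matching leaves in order of index then gives a punctual isomorphism with punctual inverse. This contradicts the hypothesis.
    \end{enumerate}
\end{enumerate}

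The main obstacle is case 3(a). The second bullet of Theorem~\ref{thm:d1tree} literally asks that \emph{each} successor have its own successor, so one must check that its proof goes through under the weaker hypothesis that infinitely many do. The proof's $A$-construction interleaves standard pairs with ordinary successors, which is exactly what is needed. Alternatively, one can reduce to the stated hypothesis by noting that, with only finitely many leaf-successors, those leaves can be fixed non-uniformly. I would also verify the finite-height claim from the classification, since everything rests on it.

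Combining the two inclusions yields $\prcat(T)=\cone(\Delta_{1}^{0})$.
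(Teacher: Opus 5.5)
Your proposal is correct and follows the paper's route: Fact~\ref{fact:conesubspec} gives $\cone(\Delta_{1}^{0})\subseteq\prcat(T)$, and the classification (in particular, finite height) together with Theorem~\ref{thm:d1tree} gives the reverse inclusion. Your explicit case analysis fills in what the paper leaves implicit, and you are right that the proof of the second bullet only uses that infinitely many successors of $t^{*}$ have successors of their own, as the proof's own heading states; your ``alternative'' reduction in case 3(a) is unnecessary and would not handle the situation where $t^{*}$ has infinitely many leaf-successors as well as infinitely many non-leaf ones.
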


Since $\prcat(T)\subseteq\cone(\Delta_{1}^{0})$ implies that $T$ is not punctually categorical, we may also obtain the following.

\begin{corollary}
    Let $T$ be a punctual tree of finite height. $T$ is punctually categorical iff $T=S\sqcup F$ where $S$ is an infinite star, and $F$ is a finite set, or $T$ is finite.
\end{corollary}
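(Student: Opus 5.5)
We prove the two directions separately. For the easy direction, suppose $T$ is finite, or $T=S\sqcup F$ with $S$ an infinite star and $F$ finite. If $T$ is finite, any two punctual copies are isomorphic via a finite table, which is primitive recursive together with its inverse. In the second case, we non-uniformly fix the finitely many nodes of $F$ and the root of $S$ in both copies. Every remaining element must be a leaf of the star, so an isomorphism $h$ maps the $n$-th such leaf of $A$ to the $n$-th such leaf of $B$. Since membership in $F$ or being the star's root is decided by a finite table, we can list the leaves of either copy primitive recursively, by searching $0,\dots,n+c$ for a fixed constant $c$. Hence $h$ and $h^{-1}$ are both primitive recursive, and $T$ is punctually categorical.

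For the converse, assume $T$ is an infinite punctual tree of finite height that is not of the above form. We aim to show $\prcat(T)\subseteq\cone(\Delta_{1}^{0})$, which rules out punctual categoricity: otherwise the zero PR-degree would lie in $\prcat(T)$, hence would bound every total computable function, which is false. An infinite tree of finite height must, by König's lemma, have a node with infinitely many successors. If two such nodes exist, the first case of Theorem~\ref{thm:d1tree} applies. So assume exactly one node $t^{*}$ has infinitely many successors. If infinitely many of those successors have successors of their own, we apply the second case of Theorem~\ref{thm:d1tree}. Reading its proof, this hypothesis suffices, even though the statement says ``each of which''.

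The remaining case, which is the main obstacle, is when cofinitely many successors of $t^{*}$ are leaves and every other node has finitely many successors. Then the part of $T$ outside the leaves of $t^{*}$ is finite. If $t^{*}$ is the root, this is exactly the excluded form $S\sqcup F$. So $t^{*}$ has a predecessor, and $T$ is a finite tree with a star attached at $t^{*}$.

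I would handle this by adapting the Type~(i) argument of Theorem~\ref{theo:Delta_1-cat-eq-str}. Since $t^{*}$ is not the root, the finite part contains an element we can use. In $A$ we place the extra finite part first, so that any node not below $t^{*}$ is known, and then enumerate the leaves of $t^{*}$ quickly. In $B$ we must still produce new elements at every stage, but leaves of $t^{*}$ are the only possible padding. I would first try to delay the enumeration of the \emph{finite} part of $B$: put the $x$-th non-star node in only after $g(y)\downarrow$ for all $y\le x$. This encodes just finitely many values, though, so for infinitely many $x$ it fails; finite delay cannot encode $g$.

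Moreover, in this case every isomorphism fixes the finite part up to a finite table and maps leaves of $t^{*}$ to leaves of $t^{*}$ in order. This suggests $T$ is actually punctually categorical, and that the characterisation should be read as ``$S$ a star with finitely many finite pieces attached''. I would therefore check whether the corollary's form $S\sqcup F$ is intended to cover this case, presumably with $\sqcup$ meaning attaching finitely many extra nodes, and if so move this case into the easy direction. The proof would then reduce entirely to Theorem~\ref{thm:d1tree}, König's lemma, and the finite-table argument above.
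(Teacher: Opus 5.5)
Your proposal is correct and follows the paper's route. The paper's entire argument is that Theorem~\ref{thm:d1tree} gives $\prcat(T)\subseteq\cone(\Delta_{1}^{0})$ and hence rules out punctual categoricity. The König step, the easy direction, and the fact you noticed (the proof of that theorem only needs \emph{infinitely many} successors of the unique infinitely-branching node to have successors of their own) are left implicit there, and your write-up supplies them.

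Your hesitation at the end should be resolved as you suspected. The leftover case, where finitely many nodes are present together with cofinitely many leaf children of a single node $t^{*}$ (root or not), is punctually categorical by your finite-table argument: fix every node other than the leaf children of $t^{*}$, and match the remaining elements in order of index. So $S\sqcup F$ has to be read as ``an infinite star together with finitely many further nodes,'' which is the only reading under which the corollary, and the paper's derivation of it, is correct.
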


\bibliographystyle{alpha}
\bibliography{mybib}

\end{document}